\theoremstyle{plain}
\newtheorem{theorem}{Theorem}[section]
\newtheorem{corollary}{Corollary}[section]
\newtheorem{lemma}{Lemma}[section]
\theoremstyle{definition}
\newtheorem{definition}{Definition}[section]
\newtheorem{example}{Example}[section]
\numberwithin{equation}{section}
\let\Item\item
\newenvironment{romanlist}{%

	\let\item\Item
	\begin{enumerate}
	}{%
	\end{enumerate}}
\begin{document}

\centerline{{\huge Chemical Reaction Systems with a Homoclinic}}

\medskip

\centerline{{\huge Bifurcation: an Inverse Problem}}
 
\medskip
\bigskip

\centerline{
\renewcommand{\thefootnote}{$*$}
{\Large Tomislav Plesa\footnote{
Mathematical Institute, University of Oxford, Radcliffe Observatory 
Quarter, Woodstock Road, Oxford, OX2 6GG, United Kingdom;
e-mails: tomislav.plesa@some.ox.ac.uk, erban@maths.ox.ac.uk}
\qquad 
\renewcommand{\thefootnote}{$\dagger$}
Tom\'{a}\v{s} Vejchodsk\'{y}\footnote{
Institute of Mathematics, Czech Academy of Sciences, 
\v{Z}itn\'{a} 25,
Praha 1, 115 67, Czech Republic, e-mail: vejchod@math.cas.cz}
\qquad
Radek Erban$^*$}}

\medskip
\bigskip

\noindent
{\bf Abstract}:
An inverse problem framework for constructing reaction systems with prescribed 
properties is presented. Kinetic transformations are defined and analysed as 
a part of the framework, allowing an arbitrary polynomial ordinary differential 
equation to be mapped to the one that can be represented as a reaction network. 
The framework is used for construction of specific two- and three-dimensional 
bistable reaction systems undergoing a supercritical homoclinic bifurcation, 
and the topology of their phase spaces is discussed.

\section{Introduction}
Chemical reaction networks under the mass action kinetics are relevant 
for both pure and applied mathematics. The time evolution of the
concentrations of chemical species is described by kinetic equations 
which are a subset of first-order, autonomous, ordinary differential 
equations (ODEs) with polynomial right-hand sides (RHSs). On the one 
hand, the kinetic equations define a canonical form for analytic 
ODEs, thus being important for pure mathematics~\cite{UNI1,UNI2}. 
They can display not only the chemically regular phenomenon of having 
a globally stable fixed point, but also the chemically exotic phenomena 
(multistability,  limit cycles, chaos). It is then no surprise that 
chemical reaction networks can perform the same computations as other 
types of physical networks, such as electronic and neural 
networks~\cite{Synth1}. On the other hand, reaction networks are 
a versatile modelling tool, decomposing processes from applications 
into a set of simpler elementary steps (reactions). The exotic phenomena 
in systems biology often execute specific biological functions, example 
being the correspondence between limit cycles and biological 
clocks~\cite{Intro11,Intro2}.

The construction of reaction networks displaying prescribed properties may 
be seen as an inverse problem in formal reaction kinetics~\cite{Toth2}, 
where, given a set of properties, a set of compatible reaction networks 
is searched for. Such constructions are useful in application areas such 
systems biology (as caricature models), synthetic biology (as blueprints), 
and numerical analysis (as test problems)~\cite{Intro1,Repress}. 
In systems biology, kinetic ODEs often have higher nonlinearity degree and 
higher dimension, thus not being easily amenable to mathematical 
analysis. Having ODEs with lower nonlinearity degree and lower 
dimension allows for a more detailed mathematical analysis, and also 
adds to the set of test problems for numerical methods designed for 
more challenging real-world problems. In synthetic biology, such 
constructed systems may be used as a blueprint for engineering 
artificial networks~\cite{Repress}.

A crucial property of the kinetic equations is a lack of so-called 
cross-negative terms~\cite{Toth1}, corresponding to processes
that involve consumption of a species when its concentration is 
zero. Such terms are not directly describable by reactions, and may 
lead to negative values of concentrations. The existence of cross-negative 
terms, together with a requirement that the dependent variables are 
always finite, imply that not every nonnegative polynomial ODE 
system is kinetic, and, thus, further constrain the possible 
dynamics, playing an important role in the construction of reaction 
systems, chemical chaos, and pattern formation via Turing 
instabilities~\cite{Toth3,Toth4}. A trivial example of 
an ODE with a cross-negative term is given by 
$\mathrm{d} x/\mathrm{d} t = - k$, for constant $k > 0$, where the 
term $-k$, although a polynomial of degree zero, nevertheless 
cannot be directly represented by a reaction, and results in $x < 0$. 

In two dimensions, where the phase plane diagram allows for 
an intuitive reasoning, the exotic dynamics of ODE systems 
reduces to limit cycles and multistability. While two-dimensional 
nonkinetic polynomial ODE systems exhibiting a variety of 
such dynamics can be easily found in the literature, the same 
is not true for the more constrained two-dimensional kinetic 
ODE systems. Motivated by this, this paper consists of two 
main results: firstly, building upon the framework 
from~\cite{Toth2,Toth1}, an inverse problem framework 
suitable for constructing the reaction systems is presented in 
Section~\ref{sec:inverse}, with the focus on the so-called 
kinetic transformations, allowing one to map a nonkinetic into 
a kinetic system. Secondly, in Section~\ref{sec:construction}, 
the framework is used for construction of specific two- and 
three-dimensional bistable kinetic systems undergoing a global 
bifurcation known as a supercritical homoclinic bifurcation. 
The corresponding phase planes contain a stable limit cycle 
and a stable fixed point, with a parameter controlling the 
distance between them, and their topology is discussed. Definitions 
and basic results regarding reaction systems are presented 
in Section~\ref{sec:notation}. A summary of the paper is presented 
in Section~\ref{sec:end}.

\section{Notation and definitions} 
\label{sec:notation}

The notation and definitions in this paper are 
inspired by~\cite{Feinberg,Craciun,Toth1}.

\begin{definition} \label{def:notation}
Let $\mathbb{R}$ be the space of real numbers, $\mathbb{R}_{\ge}$ the 
space of nonnegative real numbers, $\mathbb{R}_{>}$ the space 
of positive real numbers and $\mathbb{N} = \{0, 1, 2, 3, \dots \}$
the set of natural numbers. Given a finite set $\mathcal{S}$, with 
cardinality $|\mathcal{S}| = S$, the real space of formal sums 
$c = \sum_{s \in \mathcal{S}} c_s s$ is denoted by 
$\mathbb{R}^{\mathcal{S}}$ if $c_s \in \mathbb{R}$ for all 
$s \in \mathcal{S}$. It is denoted by $\mathbb{R}_{\ge}^{\mathcal{S}}$ 
if $c_s \in \mathbb{R}_{\ge}$ for all $s \in \mathcal{S}$;
by $\mathbb{R}_{>}^{\mathcal{S}}$ if $c_s \in \mathbb{R}_{>}$
for all $s \in \mathcal{S}$; and by  
$\mathbb{N}^{\mathcal{S}}$ if $c_s \in \mathbb{N}$
for all $s \in \mathcal{S}$; where the number $c_s$ is called 
the $s$-component of $c$ for $s \in \mathcal{S}$. \emph{Support} 
of $c \in \mathbb{R}^{\mathcal{S}}$ is defined as 
$\text{supp}(c) = \{s \in \mathcal{S}: c_s \ne 0\}$. 
Complement of a set $\mathcal{M} \subset \mathcal{S}$ 
is denoted by $\mathcal{M}^{\mathsf{c}}$, and given 
by $\mathcal{M}^{\mathsf{c}} = \mathcal{S} \setminus \mathcal{M}$. 
\end{definition}
\noindent
The formal sum notation is introduced so that unnecessary ordering 
of elements of a set can be avoided, such as when general frameworks 
involving sets are described, and when objects under consideration 
are vector components with irrelevant ordering. The usual vector 
notation is used when objects under consideration are equations 
in matrix form, and is put into using starting with 
equation~(\ref{eqn:nkv}).

\subsection{Reaction networks and reaction systems} 
\label{sec:rn}

\begin{definition} \label{def:rn}
A \emph{reaction network} is a triple 
$\{\mathcal{S}, \mathcal{C}, \mathcal{R}\}$, where
\begin{enumerate}[(i)]
\item $\mathcal{S}$ is a finite set, with elements $s \in \mathcal{S}$ 
called the \emph{species} of the network.
\item $\mathcal{C} \subset \mathbb{N}^{\mathcal{S}}$ is 
a finite set, with elements $c \in \mathcal{C}$, called 
the \emph{complexes} of the network,  such that 
$\bigcup _{c \in \mathcal{C}} \text{supp}(c) = \mathcal{S}$. 
Components of $c$ are called the \emph{stoichiometric coefficients}.
\item $\mathcal{R} \subset \mathcal{C} \times \mathcal{C}$ is a binary 
relation with elements $r = (c,c')$, denoted $r = c \to c'$, with 
the following properties:
\begin{enumerate}
\item $\forall c \in \mathcal{C}$
      $(c \to c) \notin \mathcal{R}$;
\item $\forall c \in \mathcal{C}$ $\exists c' \in \mathcal{C}$ such 
that $(c \to c') \in \mathcal{R}$ or $(c' \to c) \in \mathcal{R}$.
\end{enumerate}
Elements $r = c \to c'$ are called \emph{reactions} of the network, 
and it is said that $c$ \emph{reacts to} $c'$, with $c$ being called 
the \emph{reactant complex}, and $c'$ the \emph{product complex}. 
The \emph{order} of reaction $r$ is given by 
$o_r = \sum_{s \in \mathcal{S}} c_s < \infty$ for
$r = c \to c' \in \mathcal{R}$.
\end{enumerate}
\end{definition}
\noindent Note that as set $\mathcal{R}$ implies sets $\mathcal{S}$ 
and $\mathcal{C}$, reaction networks are denoted with $\mathcal{R}$, 
for brevity. Also, as it is unlikely that a reaction between more 
than three reactants occurs~\cite{Toth1}, in this paper we consider reactions with 
$o_r \le 3$. To represent some of the non-chemical processes as quasireactions, 
the \emph{zero complex} is introduced, denoted with $\varnothing$, 
with the property that $\text{supp}(\varnothing) = \emptyset$, 
where $\emptyset$ is the empty set. 

\begin{definition} \label{def:kinetics}
Let $\mathcal{R}$ 
be a reaction network and let 
$\kappa : \mathbb{R}_{\ge}^{\mathcal{S}} \to \mathbb{R}_{\ge}^{\mathcal{R}}$ 
be a continuous function which maps $x \in \mathbb{R}_{\ge}^{\mathcal{S}}$
(called ``species concentrations'') into 
$\kappa(x) \in \mathbb{R}_{\ge}^{\mathcal{R}}$ (called ``reaction rates'').
Then $\kappa$ is said to be a \emph{kinetics} for $\mathcal{R}$
provided that, for all $x \in \mathbb{R}_{\ge}^{\mathcal{S}}$
and for all $r = (c \to c') \in \mathcal{R}$, positivity
$\kappa_{r}(x) > 0$ is satisfied if and only if 
$\mathrm{supp}(c) \subset \mathrm{supp}(x)$. 
\end{definition}

\noindent
An interpretation of Definition~\ref{def:kinetics} is that a reaction, 
to which a kinetics can be associated, can occur if and only if all 
the reactant species concentrations are nonzero. 

\begin{definition} 
A reaction network $\mathcal{R}$ augmented with a kinetics $\kappa$ 
is called a \emph{reaction system}, and is denoted 
$\{\mathcal{R}, \kappa\}$.
\end{definition} 

\begin{definition} \label{def:kinfun}
Given a reaction system $\{\mathcal{R}, \kappa\}$, the induced 
\emph{kinetic function}, $\mathcal{K}(\cdot ; \, \mathcal{R}) :
\mathbb{R}_{\ge}^{\mathcal{S}} \to \mathbb{R}^{\mathcal{S}}$, is 
given by $\mathcal{K}(x ; \, \mathcal{R}) = \sum_{r \in \mathcal{R}}
\kappa_{r}(x) (c' - c)$ where $r = c \to c'$. The induced 
\emph{system of kinetic equations}, describing the time evolution 
of species concentrations $x \in \mathbb{R}_{\ge}^{\mathcal{S}}$, 
takes the form of a system of autonomous first-order ordinary 
differential equations (ODEs), and is given by
\begin{align}
\frac{\mathrm{d} x}{\mathrm{d}  t} & 
=  \mathcal{K}(x ; \, \mathcal{R}). \label{eqn:kinetic}
\end{align}
\end{definition}
\noindent Note that the kinetic function uniquely defines the system 
of kinetic equations, and vice-versa. In this paper, the species 
concentrations satisfying equation~(\ref{eqn:kinetic}) are required 
to be finite, i.e. $x_s < \infty$, for $s \in \mathcal{S}$, and for 
$t \ge 0$, except possibly for initial conditions located on 
a finite number of $(S-z)$-dimensional subspaces of 
$\mathbb{R}_{\ge}^{\mathcal{S}}$, $z \ge 1$.

\begin{definition} \label{def:mak}
Kinetics $\kappa$ is called the \emph{mass action kinetics} if 
$\kappa_{r}(x) = k_{r} x^{c}$, for $r = (c \to c') \in \mathcal{R}$, 
where $k_{r} > 0$ is the \emph{rate constant} of reaction $r$, 
and $x^{c} = \prod_{s \in S} x_s^{c_{s}}$, with $0^0 = 1$. A reaction 
system with the mass action kinetics is denoted $\{\mathcal{R}, k\}$, 
and the corresponding kinetic function is denoted 
$\mathcal{K}(x; \, k) \equiv \mathcal{K}(x; \, \mathcal{R}) 
= \sum_{r \in \mathcal{R}} k_{r} (c' - c) x^{c}$,
where $k \in \mathbb{R}_{>}^{\mathcal{R}}$.
\end{definition}
\noindent A review of the mass action kinetics can 
be found in~\cite{MAK}. In this paper, most of the results 
are stated with kinetics fixed to the mass action kinetics. 
This is not restrictive, as an arbitrary analytic function 
can always be reduced to a polynomial one~\cite{UNI1}. 

\begin{example} \label{ex:rn} Consider the following reaction 
network (consisting of one reaction) under the mass action 
kinetics: 
\begin{align}
r_1: \;  & \; \; s_1 + s_2 \xrightarrow[]{ k_{1} }  2 s_2, 
\label{eqn:exrn1} 
\end{align}
so that $\mathcal{S} = \{s_1,s_2\}$, $\mathcal{C} = \{ s_1 + s_2, 2 s_2\}$,
$\mathcal{R} = \{ s_1 + s_2 \to  2 s_2 \}$ and $k = \{k_{1}\}$. 
Concentration $x \in \mathbb{R}_{\ge}^{\mathcal{S}}$
has two components. To simplify our notation, we write
$x_1 = x_{s_1}$, $x_2 = x_{s_2}$, and 
$\mathcal{K}_{1}(x ; \, k) = \mathcal{K}_{s_1}(x ; \, \mathcal{R})$,  
$\mathcal{K}_{2}(x ; \, k) = \mathcal{K}_{s_2}(x ; \, \mathcal{R})$. 
Then the induced system of kinetic equations is given by
\begin{align}
\frac{\mathrm{d} x_{1}}{\mathrm{d} t}  \; \; 
= \mathcal{K}_{1}(x; \, k) & 
= - k_{1} x_{1} x_{2}, \label{eqn:exrn2a}\\
\frac{\mathrm{d} x_{2}}{\mathrm{d} t}  \; \; 
= \mathcal{K}_{2}(x; \,k) & 
=  k_{1} x_{1} x_{2}. \label{eqn:exrn2b} 
\end{align}
\end{example}

\subsection{Kinetic and nonkinetic functions}
In this subsection, nonkinetic functions are defined, and 
further notation for kinetic and nonkinetic functions 
taking the mass action form is presented.

\begin{definition} \label{def:cnt}
Let $f: \mathbb{R}^{\mathcal{S}}_{\ge} \to \mathbb{R}^{\mathcal{S}}$
be given by $f_{s}(x) = \sum_{r \in \mathcal{R}} f_{s r}(x)$,
where $f_{s r}(x) \in \mathbb{R}$, 
for $x \in \mathbb{R}^{\mathcal{S}}_{\ge}$,
$s \in \mathcal{S}$ and $r \in \mathcal{R}.$ 
If $\exists s \in \mathcal{S}$, $\exists r \in \mathcal{R}$ 
and $\exists x \in \mathbb{R}^{\mathcal{S}}_{\ge}$ such that 
$s \in \mathrm{supp}^{\mathsf{c}}(x)$ and $f_{s r}(x) < 0$, 
then $f_{s r}(x)$ is called a \emph{cross-negative term}, 
and function $f(x)$ and ODE system $\mathrm{d} x/\mathrm{d} t = f(x)$ 
are said to be \emph{nonkinetic}.
\end{definition}
\noindent
An interpretation of a cross-negative term is that the process 
corresponding to such a term would consume at least one reactant 
even when its concentration is zero, so that it cannot be 
represented as kinetic reactions. 

Kinetic and nonkinetic functions taking the mass action kinetics 
form are central to this paper. The related notation is introduced 
in the following definition. 
\begin{definition} \label{def:funspaces}
Let $\mathcal{P}(\cdot ; \, k):\mathbb{R}^{\mathcal{S}} \to \mathbb{R}^{\mathcal{S}}$, 
$k \in \mathbb{R}^{\mathcal{R}}$, be a polynomial function with 
polynomial degree $\mathrm{deg}(\mathcal{P}(x; \, k)) \le m$, $m \in \mathbb{N}$. 
Then, the set of functions $\mathcal{P}(x ; \, k)$ is denoted by 
$\mathbb{P}_{m}(\mathbb{R}^{\mathcal{S}}; \, \mathbb{R}^{\mathcal{S}})$. 
If  $\mathcal{P}(x ; \, k)$ is a kinetic function, it is denoted by 
$\mathcal{K}(x; \, k)$, $k \in \mathbb{R}_{>}^{\mathcal{R}}$, and the 
set of such functions is denoted by 
$\mathbb{P}_{m}^{\mathcal{K}}(\mathbb{R}_{\ge}^{\mathcal{S}}; \, 
\mathbb{R}^{\mathcal{S}})$. If $\mathcal{P}(x ; \, k)$ is a nonkinetic 
function, it is denoted by $\mathcal{N}(x ; \, k)$, 
$k \in \mathbb{R}^{\mathcal{R}}$, and the set of such functions 
with domain $\mathbb{R}^{\mathcal{S}}$ is denoted 
by $\mathbb{P}_{m}^{\mathcal{N}}(\mathbb{R}^{\mathcal{S}}; \, 
\mathbb{R}^{\mathcal{S}})$, while with domain $\mathbb{R}_{\ge}^{\mathcal{S}}$ by $\mathbb{P}_{m}^{\mathcal{N}}(\mathbb{R}_{\ge}^{\mathcal{S}}; \, \mathbb{R}^{\mathcal{S}})$. 
\end{definition}
\noindent
Note that a system $\{R, k\}$, corresponding to $\mathcal{N}(x ; \, k)$ in 
Definition~\ref{def:funspaces}, has a well-defined reaction network 
$R$ (for $r = c \to c'$, $r \in \mathcal{R}$, we restrict $c, c'$ to positive 
integers), but an ill-defined kinetics taking the mass action 
\emph{form} (we allow set $k$ to have elements that are negative). 
Thus, set $k$ corresponding to $\mathcal{N}(x ; \, k)$ cannot be 
interpreted as a set of reaction rate constants, as opposed to 
set $k$ corresponding to $\mathcal{K}(x ; \, k)$ (see also Example~\ref{ex:cnt}). 
Note also that 
$\mathbb{P}_{m}(\mathbb{R}_{\ge}^{\mathcal{S}}; \, \mathbb{R}^{\mathcal{S}}) = \mathbb{P}_{m}^{\mathcal{K}}(\mathbb{R}_{\ge}^{\mathcal{S}}; \, \mathbb{R}^{\mathcal{S}}) \cup \mathbb{P}_{m}^{\mathcal{N}}(\mathbb{R}_{\ge}^{\mathcal{S}}; \, \mathbb{R}^{\mathcal{S}})$, with $\mathbb{P}_{m}^{\mathcal{K}}(\mathbb{R}_{\ge}^{\mathcal{S}}; \, \mathbb{R}^{\mathcal{S}}) \cap \mathbb{P}_{m}^{\mathcal{N}}(\mathbb{R}_{\ge}^{\mathcal{S}}; \, \mathbb{R}^{\mathcal{S}}) = \emptyset$.

\subsection{Properties of kinetic functions}
From Definition~\ref{def:kinetics} it follows that a kinetic function 
$\mathcal{K}(x ; \, \mathcal{R})$ has a structural property: cross-negative 
terms are absent. In this subsection, further properties of 
$\mathcal{K}(x ; \, \mathcal{R})$ are defined: nonnegativity 
(absence of cross-negative effect), and a structural property called
$x$-factorability.
\begin{definition} \label{def:cne}
Let $f: \mathbb{R}^{\mathcal{S}}_{\ge} \to \mathbb{R}^{\mathcal{S}}$
be given by $f_{s}(x) = \sum_{r \in \mathcal{R}} f_{s r}(x)$,
where $f_{s r}(x) \in \mathbb{R}$, 
for $x \in \mathbb{R}^{\mathcal{S}}_{\ge}$,
$s \in \mathcal{S}$ and $r \in \mathcal{R}.$ 
If $\forall s \in \mathcal{S}$, 
$\forall x \in \mathbb{R}^{\mathcal{S}}_{\ge}$, 
$s \in \mathrm{supp}^{\mathsf{c}}(x) \Rightarrow f_s(x) \ge 0$, 
then $f(x)$ and $\mathrm{d} x/\mathrm{d} t = f(x)$ are said to 
be \emph{nonnegative}. Otherwise, $f(x)$ and 
$\mathrm{d} x/\mathrm{d} t = f(x)$ are said to be \emph{negative}, 
and a \emph{cross-negative effect} is said to 
exists $\forall x \in \mathbb{R}^{\mathcal{S}}_{\ge}$ for which 
$\exists s \in {\mathcal{S}}$ such that $s \in \mathrm{supp}^{\mathsf{c}}(x)$ and
$f_s(x) < 0$. 
\end{definition}
\noindent Note that the absence of cross-negative terms implies 
nonnegativity, but the converse is not necessarily
true~\cite{Toth1,ChemDyn1}, i.e. an ODE system may have 
cross-negative terms, without having a cross-negative effect, as 
we will show in Example~\ref{ex:cnt}.

Cross-negative terms play an important role in mathematical constructions 
of reaction systems, in the context of chaos in kinetic equations, 
and pattern formation via Turing instabilities~\cite{Toth3,Toth4}. 
In the context of oscillations, as a generalization of the result
in~\cite{Constr1}, one can prove that in two-dimensional reaction 
systems with mass action form and with at most bimolecular reactions, 
the nonexistence of a cross-negative effect in the ODEs 
is a sufficient condition for nonexistence of limit cycles 
(see~\ref{OsciCNT}).

\begin{example} \label{ex:cnt} Consider the following ODE system
with polynomial RHS: 
\begin{align}
\frac{\mathrm{d} x_{1}}{\mathrm{d} t}  
= \mathcal{P}_{1}(x; \, k) & 
= 1 + x_1^2 + 2 k x_2 + x_2^2, 
\label{eqn:excnt1a}
\\
\frac{\mathrm{d} x_{2}}{\mathrm{d} t} 
= \mathcal{P}_{2}(x; \,k) & 
= 1,
\label{eqn:excnt1b}
\end{align}
where $\mathcal{P}(x; \, k) \in \mathbb{P}_{2}(\mathbb{R}^{\mathcal{S}}; 
\, \mathbb{R}^{\mathcal{S}})$, $S = 2$, 
$k \in \mathbb{R}$ and $x = \{x_1, x_2\}$. 
Considering $x_1 = 0$ and $x_2 > 0$, it follows that 
$\mathcal{P}_1(\{0, x_2\}; \, k) 
= 1 + 2 k x_2 + x_2^2$. Then:
\begin{enumerate}[(i)]
\item If $k \ge 0$, 
then~(\ref{eqn:excnt1a})--(\ref{eqn:excnt1b}) contains no 
cross-negative terms, and so it is kinetic: $\mathcal{P}(x; \, k)  
\in \mathbb{P}_{2}^{\mathcal{K}}(\mathbb{R}_{\ge}^{\mathcal{S}}; 
\, \mathbb{R}^{\mathcal{S}})$. 
\item If $k < 0$, then~(\ref{eqn:excnt1a})--(\ref{eqn:excnt1b}) 
contains one cross-negative term, $2 k x_2$, and so it is nonkinetic: 
$\mathcal{P}(x; \, k)  
\in \mathbb{P}_{2}^{\mathcal{N}}(\mathbb{R}^{\mathcal{S}}; \, \mathbb{R}^{\mathcal{S}})$.
 \begin{enumerate}
\item If $-1 \le k < 0$, then~(\ref{eqn:excnt1a})--(\ref{eqn:excnt1b}) 
contains no cross-negative effect, and so it is nonnegative.
\item If $k < -1$, then~(\ref{eqn:excnt1a})--(\ref{eqn:excnt1b}) 
contains a cross-negative effect for $x=\{0,x_2\}$, where
$x_2 \in \big(- k - \sqrt{k^2 - 1}, - k + \sqrt{k^2 - 1 } \big)$, 
and so it is negative.
\end{enumerate}
\end{enumerate}
System~(\ref{eqn:excnt1a})--(\ref{eqn:excnt1b}) induces a reaction system 
only in case (i). In particular,
nonnegative ODE system~(\ref{eqn:excnt1a})--(\ref{eqn:excnt1b}) 
with $\mathcal{P}(x; \, k)  
\in \mathbb{P}_{2}^{\mathcal{N}}(\mathbb{R}_{\ge}^{\mathcal{S}}; \, 
\mathbb{R}^{\mathcal{S}})$ in part (ii)(a) does \emph{not} induce 
a reaction system (although, given a nonnegative initial 
condition, the solution of~(\ref{eqn:excnt1a})--(\ref{eqn:excnt1b})
is nonnegative for all forward times).
\end{example}

\begin{definition} \label{def:xf}
Let $f: \mathbb{R}^{\mathcal{S}}_{\ge} \to \mathbb{R}^{\mathcal{S}}$
be given by $f_{s}(x) = \sum_{r \in \mathcal{R}} f_{s r}(x)$,
where $f_{s r}(x) \in \mathbb{R}$, 
for $x \in \mathbb{R}^{\mathcal{S}}_{\ge}$,
$s \in \mathcal{S}$ and $r \in \mathcal{R}.$ 
Then term $f_{s r}(x)$ is said to be \emph{$x_s$-factorable} if 
$f_{s r}(x) = x_s p_{s r}(x)$, where $p_{s r}(x)$ is a polynomial 
function of $x$. If $\exists s \in \mathcal{S}$, such that 
$f_s(x) = x_s \sum_{r \in \mathcal{R}} p_{s r}(x)$, 
then $f(x)$ and ODE system $\mathrm{d} x/\mathrm{d} t = f(x)$ are 
said to be \emph{$x_s$-factorable}. 
If $\forall s \in \mathcal{S}$ it is true that  
$f_s(x) = x_s \sum_{r \in \mathcal{R}} p_{s r}(x)$, then 
$f(x)$ and ODE system $\mathrm{d} x/\mathrm{d} t = f(x)$ 
are said to be \emph{$x$-factorable}.
\end{definition}
\begin{example} \label{ex:xf} System~(\ref{eqn:exrn2a})--(\ref{eqn:exrn2b}) 
is $x$-factorable, 
since $\mathcal{K}_{1}(x; \, k) = x_1 (-k_1 x_2)$ 
and $\mathcal{K}_{2}(x; \, k)= x_2 (k_1 x_1)$.
\end{example}

\noindent 
$X$-factorable ODE systems are a subset of kinetic equations under 
the mass action kinetics~\cite{Xfactor} (see also Section~\ref{sec:xft}).

\section{Inverse problem for reaction systems} 
\label{sec:inverse}
In some applications, we are interested in the \emph{direct problem}: 
we are given a reaction network with kinetics, i.e. a reaction system 
$\{\mathcal{R}, \kappa\}$, and we then analyse the
induced system of kinetic equations (\ref{eqn:kinetic}) in order to 
determine properties of the reaction system. For example, an output 
of a direct problem might consist of verifying that the kinetic 
equations undergo a bifurcation.
In this paper, we are interested in the \emph{inverse problem}: we 
are given a property of an unknown reaction system, and we would 
then like to construct a reaction system displaying the property. 
The inverse problem framework described in this section is 
inspired by~\cite{Toth1,Toth2}.

The first step in the inverse problem is, given a quantity that depends 
on a kinetic function, to find a compatible 
kinetic function $\mathcal{K}(x ; \, \mathcal{R})$, while the second step 
is then to find a reaction system $\{\mathcal{R}, \kappa\}$ induced by the 
kinetic function. The 
second step is discussed in more detail in Section~\ref{sec:cn}, 
while the first step in Section~\ref{sec:kt}.
The constructions of a reaction system $\{\mathcal{R}, \kappa\}$ 
often involve constraints defining simplicity of the system 
(e.g. see~\cite{Constr3}), and the simplicity can be related 
to the kinetic equations (structure and dimension of the equations, 
and/or the phase space), and/or to reaction networks (cardinality, 
conservability, reversibility, deficiency). How the simplicity 
constraints are prioritized depends on the application area, with 
simplicity of the kinetic equations being more important for 
mathematical analysis, while simplicity of the reaction networks 
for synthetic biology. 

\subsection{The canonical reaction network} 
\label{sec:cn}

Let us assume that we are able to construct an ODE system
of the form (\ref{eqn:kinetic}) where its RHS is a kinetic
function, $\mathcal{K}(x ; \, \mathcal{R})$, and the system 
has the property required by the inverse
problem. Then, one can always find a reaction system induced 
by the kinetic function~\cite{Toth3,Toth2}. 
While, for a fixed kinetics, a reaction network induces kinetic function 
uniquely by definition (see Definition~\ref{def:kinfun}), the 
converse is not true -- the inverse mapping of the kinetic function 
to the reaction networks is not unique --
a fact known as the fundamental dogma of chemical 
kinetics~\cite{Craciun,Gabor2,Toth3}. For example, 
in~\cite{Gabor2}, for a fixed kinetic function and a fixed 
set of complexes ($\mathcal{C}$ fixed), mixed integer programming 
is used for numerical computation of different induced reaction 
networks with varying properties. On the other hand, a constructive 
proof that every kinetic function induces a reaction system 
is given in~\cite{Toth3}, where $\mathcal{C}$ is generally not fixed 
(product complexes may be created), but the construction can be 
performed analytically, and it \emph{uniquely} defines an induced 
reaction system for a given kinetic function. The procedure is 
used in this paper, so it is now defined.

\begin{definition} \label{def:cn}
Let 
$\kappa : \mathbb{R}_{\ge}^{\mathcal{S}} \to \mathbb{R}_{\ge}^{\mathcal{R}}$
be a kinetics. Consider the kinetic function given by 
$\mathcal{K}(x ; \, \mathcal{R}) = 
\sum_{r \in \mathcal{R}} d_r \kappa_{r}(x)$,
where 
$x \in \mathbb{R}^{\mathcal{S}}_{\ge}$ and 
$d_r \in \mathbb{R}^{\mathcal{S}}$. Let us map 
$\mathcal{K}(x ; \, \mathcal{R})$ to a reaction 
system $\{\mathcal{R}_{\mathcal{K}^{-1}},\kappa_{\mathcal{K}^{-1}}\}$ 
with complexes and kinetics given by:
\begin{enumerate}[(i)]
\item Reactant complexes, $c_r$, are assumed to be uniquely obtainable from 
$\kappa_{r}(x)$ for $r \in \mathcal{R}$, which is true in the case of 
the mass action kinetics.
\item Reaction $c_r \to c_{rs}'$ is
then constructed for each $r \in \mathcal{R}$ and 
$s \in \mathcal{S}$, where new product complexes are given
by $c_{rs}' = c_r + \mathrm{sign}(d_{rs}) s$, with
$\mathrm{sign}(\cdot)$ being the sign function. 
\item The new kinetics is then defined as 
$\kappa_{\mathcal{K}^{-1} r'}(x) \equiv |d_{rs}| \kappa_{r}(x)$, 
for $r \in \mathcal{R}$, $s \in \mathcal{S}$, where 
$r' \in \mathcal{R}_{\mathcal{K}^{-1}}$.
\end{enumerate}
The induced reaction system 
$\{\mathcal{R}_{\mathcal{K}^{-1}},\kappa_{\mathcal{K}^{-1}}\}$ 
is called the canonical reaction system, with 
$\mathcal{R}_{\mathcal{K}^{-1}}$ being the 
\emph{canonical reaction network}.
\end{definition}

\noindent
Note that the procedure in Definition \ref{def:cn}
creates a reaction for each term in each kinetic
equation. Note also that each reaction leads to a change in copy number
of precisely one chemical species, and the change in the copy number is 
equal to one. Thus, the canonical reaction networks are simple in the 
sense that they can be constructed from a kinetic function in 
straightforward way, while they generally do not contain minimal 
number of reactions.

\begin{example} \label{ex:cn} The canonical reaction network 
for system~(\ref{eqn:exrn2a})--(\ref{eqn:exrn2b}) is given by
\begin{eqnarray}
\begin{aligned}
r_1: \;  & \; \; s_1 + s_2 \xrightarrow[]{ k_{1} }  s_2, \\
r_2: \;  & \; \; s_1 + s_2 \xrightarrow[]{ k_{2} }  s_1 + 2 s_2,\label{eqn:excn1}
\end{aligned}
\end{eqnarray}
so that $\mathcal{S} = \{s_1,s_2\}$, 
$\mathcal{C} = \{ s_1 + s_2, s_2, s_1 + 2 s_2\}$, 
$\mathcal{R}_{\mathcal{K}^{-1}} 
= \{ s_1 + s_2 \to  s_2, s_1 + s_2 \to  s_1 + 2 s_2 \}$ 
and $k_{\mathcal{K}^{-1}}  = \{k_{1},k_{2}\}$, $k_2 = k_1$. Note 
that the canonical reaction
network~(\ref{eqn:excn1}) contains more reactions than the 
original network~(\ref{eqn:exrn1}).
\end{example}

\subsection{Kinetic transformations} 
\label{sec:kt}

Firstly, mapping a solution-dependent quantity to the RHS of an ODE system 
is much more likely to result in nonkinetic 
functions,  $\mathcal{N}(x ; \, \mathcal{R})$, on the RHS
(see Definition~\ref{def:cnt})~\cite{Toth1}. 
However, only kinetic functions induce reaction networks, as 
exemplified in Example~\ref{ex:cnt}. Secondly, even if mapping 
a solution-dependent quantity results in a kinetic function, it may 
be necessary to modify the function in order to satisfy given constraints, 
and this may change the kinetic function into a nonkinetic function. 
For these two reasons, it is beneficial to study mappings that 
can transform arbitrary functions into kinetic functions. This 
motivates the following definition, for the case of mass action 
kinetics, that relies on the notation introduced 
in Definition~\ref{def:funspaces}.

\begin{definition} \label{def:kinetictransf}
Let $\mathcal{P}(x; \, k) \in  
\mathbb{P}_{m}(\mathbb{R}^{\mathcal{S}}; \, 
\mathbb{R}^{\mathcal{S}})$, $k \in \mathbb{R}^{\mathcal{R}}$, 
i.e. $\mathcal{P}(x; \,k)$ is a polynomial function.
Consider the corresponing ODE system in the formal sum notation 
\begin{align}
\frac{\mathrm{d} x}{\mathrm{d} t} = \mathcal{P}(x; \,k), \label{eqn:nk}
\end{align}
where $x \equiv x(t) \in \mathbb{R}^{\mathcal{S}}$. 
Then, a transformation $\Psi$ is called 
a \emph{kinetic transformation} if the following 
conditions are satisfied:
\begin{enumerate}[(i)]
\item $\Psi \!:\! \mathbb{P}_{m}(\mathbb{R}^{\mathcal{S}}; 
\, \mathbb{R}^{\mathcal{S}}) \to \mathbb{P}_{\bar{m}}^{\mathcal{K}}(\mathbb{R}_{\ge}^{\mathcal{\bar{S}}}; 
\, \mathbb{R}^{\mathcal{\bar{S}}})$, 
$\bar{m} \ge m$, $\bar{S} \ge S$, maps the polynomial function 
$\mathcal{P}(x; \, k)$ into a kinetic function 
$\mathcal{K}(\bar{x}; \, \bar{k}) \equiv 
(\Psi (\mathcal{P}) )(\bar{x}; \, \bar{k})$
for $\bar{x} \in \mathbb{R}_{\ge}^{\mathcal{\bar{S}}}$
and $\bar{k} \in \mathbb{R}_{>}^{\mathcal{\bar{R}}}$.
\item Let $x^{*}$ be a fixed point of (\ref{eqn:nk}) that 
is mapped by $\Psi$ to fixed point 
$\bar{x}^{*} \in \mathbb{R}_{\ge}^{\mathcal{\bar{S}}}$
of the system of kinetic equations (\ref{eqn:kinetic}) 
with $\mathcal{K}(\bar{x}; \, \bar{k})$ 
on its RHS. Let also the eigenvalues of the Jacobian matrix 
of $\mathcal{P}(x; \, k)$, $J(x^{*}; \, k)$, denoted by 
$\lambda_{n}$, $n = 1, 2, \ldots, S$, be mapped to 
the eigenvalues of Jacobian of $\mathcal{K}(\bar{x}; \, \bar{k})$, 
$J_{\Psi}(\bar{x}^{*}; \, \bar{k})$, which are denoted by 
$\bar{\lambda}_{n}$, $n = 1, 2, \ldots, S$. Then, for every such 
fixed point $x^{*}$ it must be true that $\mathrm{sign}(\lambda_{n}) 
= \mathrm{sign}(\bar{\lambda}_{n})$, $n = 1, 2, \ldots, S$, and, 
if there are any additional eigenvalues $\bar{\lambda}_{n}$, 
$n = S, S+1, \ldots, \bar{S}$, they must satisfy 
$\mathrm{sign}(\bar{\lambda}_{n}) < 0$.
\end{enumerate}
If any of the condition (i)--(ii) is not true, $\Psi$ is 
called a \emph{nonkinetic transformation}.
\end{definition}
\noindent
Put more simply, given an input polynomial function, a kinetic 
transformation must (i) map the input polynomial function into an 
output kinetic function, and (ii) the output function must be 
locally topologically equivalent to the input function in 
the neighbourhood of the corresponding fixed points, and the 
dynamics along any additional dimensions of the output 
function (corresponding to the additional species) must 
asymptotically tend to the corresponding fixed point. Let us 
note that the output function is defined only in the nonnegative 
orthant, so that the topological equivalence must hold only near 
the fixed points of the input function that are mapped to 
the nonnegative orthant under kinetic transformations.

One may wish to impose a set of constraints on an output function, 
such as requiring that a predefined region of interest in 
the phase space of the input function is mapped to the positive 
orthant of the corresponding output function. A subset of 
constraints is now defined.
\begin{definition} \label{def:constraints}
Let $\mathcal{P}(x; \, k) \in  \mathbb{P}_{m}(\mathbb{R}^{\mathcal{S}}; 
\, \mathbb{R}^{\mathcal{S}})$, $k \in \mathbb{R}^{\mathcal{R}}$. 
Let also $\phi_j : \mathbb{R}^{\mathcal{R}} \to \mathbb{R}$ 
be a continuous function, mapping set $k$ into 
$\phi_j(k) \in \mathbb{R}$, $j = 1, 2, \ldots, J$. Then, set 
$\Phi \equiv \{\phi_j(k) \ge 0 : j = 1, 2, \ldots, J\}$ is called 
a \emph{set of constraints}.
\end{definition}

There are two sets of kinetic transformations. The first, and 
the preferred, set of possible kinetic transformations are affine 
transformations, which are discussed in Section~\ref{sec:affine}. 
Affine transformations may be used, not only as possible kinetic 
transformations, but also to satisfy a set of constraints. 
The second set, necessarily used when affine transformations fail, 
are nonlinear transformations that replace cross-negative 
terms, with $x$-factorable 
terms (see Definition~\ref{def:xf}), without introducing new 
cross-negative terms, and two such transformations are discussed in 
Sections~\ref{sec:xft} and~\ref{sec:QSSA}. In choosing a nonlinear 
transformation, one generally chooses between obtaining, 
on the one hand, lower-dimensional kinetic functions with 
higher-degree of nonlinearity (i.e. lower $\bar{S}/S$ and higher 
$\bar{m}/m$ in Definition~\ref{def:kinetictransf}(i)) and/or 
higher numbers of the nonlinear terms, and, on the other hand,
higher-dimensional kinetic functions with lower degree of 
nonlinearity (i.e. higher $\bar{S}/S$ and lower $\bar{m}/m$)
and/or lower numbers of the nonlinear terms.

Before describing the transformations in a greater detail, the 
usual vector notation is introduced and related to the formal 
sum notation from Section~\ref{sec:notation}. The vector notation 
is used when ODE systems are considered in matrix form, while 
the formal sum notation is used when ODE systems are considered 
component-wise.

\smallskip

\noindent
\textbf{Notation}. Let $|\mathcal{S}| = S$, $|\mathcal{C}| = C$ 
and $|\mathcal{R}| = R$, and suppose $\mathcal{S}$, $\mathcal{C}$ 
and $\mathcal{R}$ are each given a fixed ordering with indices 
being $n = 1, 2, \ldots, S$, $i = 1, 2, \ldots, C$, and $l = 1, 2, \ldots, R$, 
respectively, i.e. one can identify the ordered components of 
formal sums with components of Euclidean vectors. Let also the 
indices $s_n$ be denoted by $n$, $n = 1, 2, \ldots, S$, for brevity. 
Then, the kinetic equations under the mass action kinetics in 
the formal sum notation are given by (\ref{eqn:kinetic}). In this
section, we start with equations which have more general polynomial, and not necessarily kinetic, 
functions on the RHS, i.e. the ODE system is written in the
formal sum notation as (\ref{eqn:nk}), 
while in the usual vector notation by
\begin{align}
\frac{\mathrm{d} \mathbf{x}}{\mathrm{d} t} & 
= \boldsymbol{\mathcal{P}}(\mathbf{x}; \, \mathbf{k}), \label{eqn:nkv}
\end{align}
where $\boldsymbol{\mathcal{P}}(\mathbf{x}; \, \mathbf{k}) \in \mathbb{P}_{m}(\mathbb{R}^{S}; \, \mathbb{R}^{S})$, $\mathbf{x} \in \mathbb{R}^{S}_{\ge}$, 
and $\mathbf{k} \in \mathbb{R}^{R}$. 

\subsubsection{Affine transformation} 
\label{sec:affine}

\begin{definition} \label{def:affine}
Consider applying an arbitrary nonsingular matrix 
$A \in \mathbb{R}^{S \times S}$ on equation~(\ref{eqn:nkv}), 
resulting in:
\begin{align}
\frac{\mathrm{d} \mathbf{\bar{x}}}{\mathrm{d} t} & = 
A \, \boldsymbol{\mathcal{P}}(A^{-1} \mathbf{\bar{x}}; \, \mathbf{\bar{k}})
\equiv
(\Psi_A \boldsymbol{\mathcal{P}})(\mathbf{\bar{x}}; \, \mathbf{\bar{k}}),
\label{eqn:affine} 
\end{align}
where $\mathbf{\bar{x}} = A \mathbf{x}$, and $\mathbf{\bar{k}}$ 
is a vector of new rate constants obtained from $\mathbf{k}$
by rewriting the polynomial on the RHS of (\ref{eqn:affine})
into the mass action form. Then $\Psi_A : \mathbb{P}_{m}(\mathbb{R}^{S}; \, \mathbb{R}^{S}) \to \mathbb{P}_{m}(\mathbb{R}^{S}; \, \mathbb{R}^{S})$, mapping $\boldsymbol{\mathcal{P}}(\mathbf{x}; \,\mathbf{k})$ to $(\Psi_A \boldsymbol{\mathcal{P}})(\mathbf{\bar{x}}; \, \mathbf{\bar{k}})$, is called a \emph{centroaffine transformation}.
If $A$ is an orthogonal matrix, then $\Psi_A$ 
is called an \emph{orthogonal transformation}.
\end{definition}

\begin{definition} \label{def:translation}
Consider substituting 
$\mathbf{\bar{x}} = \mathbf{x} + \boldsymbol{\mathcal{T}}$
in equation~(\ref{eqn:nkv}), where 
$\boldsymbol{\mathcal{T}} \in \mathbb{R}^{S}$, which results in:
\begin{align}
\frac{\mathrm{d} \mathbf{\bar{x}}}{\mathrm{d} t} & = 
\boldsymbol{\mathcal{P}}( \mathbf{\bar{x}} - \boldsymbol{\mathcal{T}}; \,
\mathbf{\bar{k}})
\equiv
(\Psi_{\mathcal{T}} \boldsymbol{\mathcal{P}})(\mathbf{\bar{x}}; \,
\mathbf{\bar{k}}),
\label{eqn:translation} 
\end{align}
where $\mathbf{\bar{k}}$ is a vector of the new rate constants
obtained from $\mathbf{k}$
by rewriting the polynomial on the RHS of (\ref{eqn:translation})
into the mass action form. Then $\Psi_{\mathcal{T}}: \mathbb{P}_{m}(\mathbb{R}^{S}; \, \mathbb{R}^{S}) \to \mathbb{P}_{m}(\mathbb{R}^{S}; \, \mathbb{R}^{S})$, mapping $\boldsymbol{\mathcal{P}}(\mathbf{x}; \, \mathbf{k})$ to $(\Psi_\mathcal{T} \boldsymbol{\mathcal{P}})(\mathbf{\bar{x}}; \, \mathbf{\bar{k}})$, is called a \emph{translation transformation}.
\end{definition}

\noindent 
A composition of a translation and a centroaffine transformation, 
$\Psi_{A,\mathcal{T}} = \Psi_{A} \circ \Psi_{\mathcal{T}}$, 
i.e. an \emph{affine transformation}, may be used as a possible kinetic transformation (see Definition~\ref{def:kinetictransf}). Let us note that condition (ii) in Definition~\ref{def:kinetictransf} is necessarily satisfied for all affine transformation, i.e. affine transformations preserve the topology of the phase space, as well as the polynomial degree of the functions being mapped~\cite{Escher}. For these reasons, affine transformations are preferred over the alternative nonlinear transformations, discussed in the next two sections. However, affine transformations do not necessarily satisfy condition (i) in Definition~\ref{def:kinetictransf}, so that they are generally nonkinetic transformations. However, despite being generally nonkinetic, affine transformations map sets $k$ into new sets $\bar{k}$ (see equations~(\ref{eqn:affine}) and~(\ref{eqn:translation})), so that they may be used for satisfying a given set of constraints imposed on the output function (see Definition~\ref{def:constraints}). This motivates the following definition.
\begin{definition} \label{def:essential}
Let $\mathcal{P}(x; \, k) \in \mathbb{P}_{m}(\mathbb{R}^{\mathcal{S}}; 
\, \mathbb{R}^{\mathcal{S}})$. If it is not possible that 
simultaneously
$(\Psi_{A} \circ \Psi_{\mathcal{T}} \mathcal{P})(\bar{x}; \, \bar{k})$ 
is a kinetic function, and that a given set of constraints 
$\Phi \equiv \{\phi_j(\bar{k}) \ge 0 : j = 1, 2, \ldots, J\}$ is satisfied, 
for all $A \in \mathbb{R}^{S \times S}$ and for all 
$\mathcal{T} \in \mathbb{R}^{S}$, then it is said that 
$\mathcal{P}(x; \, k)$ and the corresponding equation~(\ref{eqn:nk}) 
are \emph{affinely nonkinetic}, given the constraints. Otherwise, 
they are said to be \emph{affinely kinetic}, given the constraints. 
\end{definition} 
\noindent
If the set of constraints in Definition~\ref{def:constraints} is empty, 
affinely nonkinetic functions are called \emph{essentially nonkinetic}, 
while those that are affinely kinetic are called 
\emph{removably nonkinetic}. Such labels emphasize that, 
if a function is essentially nonkinetic, a kinetic function 
that is globally topologically equivalent cannot be obtained, 
while if a function is removably nonkinetic, a globally topologically 
equivalent kinetic function can be obtained.

Explicit sufficient conditions for a polynomial function 
$\boldsymbol{\mathcal{P}}(\mathbf{x}; \,\mathbf{k})$ to be affinely 
kinetic, or nonkinetic, are generally difficult to obtain. 
Even in the simpler case 
$\boldsymbol{\mathcal{P}}(\mathbf{x}; \,\mathbf{k}) 
\in \mathbb{P}_{2}(\mathbb{R}^{2}; \, \mathbb{R}^{2})$, 
such conditions are complicated, and cannot be easily generalized
for higher-dimensional systems and/or systems with higher 
degree of nonlinearily~\cite{Escher}. 
In~\cite{Toth4}, based on the polar and spectral 
decomposition theorems, it has been argued that if no 
orthogonal transformation is kinetic, then no centroaffine 
transformation is kinetic. The result is reproduced in this paper using 
the more concise singular value decomposition theorem, 
and is generalized to the case when the set of constraints is nonempty. 
Loosely speaking, the theorem states that ``orthogonally nonkinetic'' 
functions are affinely nonkinetic as well, given certain constraints.

\begin{theorem}\label{ThmAffine}
If $\mathcal{P}(x; \, k) \in \mathbb{P}_{m}(\mathbb{R}^{\mathcal{S}}; 
\, \mathbb{R}^{\mathcal{S}})$ is nonkinetic under 
$\Psi_{Q} \circ \Psi_{\mathcal{T}}$, given a set of constraints 
$\Phi$, for all orthogonal matrices $Q \in \mathbb{R}^{S \times S}$ 
and for all $\mathcal{T} \in \mathbb{R}^{S}$, then 
$\mathcal{P}(x; \, k)$ is also affinely nonkinetic, given $\Phi$, 
provided the following condition holds: 
$\mathrm{sign}(\phi_j(k)) = \mathrm{sign}(\phi_j(\bar{k}))$, 
$j = 1, 2, \ldots, J$, for all diagonal and positive definite 
matrices $\Lambda \in \mathbb{R}^{S \times S}$, 
with $\Psi_{\Lambda} \mathcal{P} = 
(\Psi_{\Lambda} \,\mathcal{P})(\bar{x}; \, \bar{k})$. 
\end{theorem}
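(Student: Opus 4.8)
The plan is to exploit the singular value decomposition (SVD) of an arbitrary nonsingular matrix $A \in \mathbb{R}^{S\times S}$, writing $A = Q_1 \Lambda Q_2$, where $Q_1, Q_2$ are orthogonal and $\Lambda$ is diagonal and positive definite. Since $\Psi_A = \Psi_{Q_1} \circ \Psi_{\Lambda} \circ \Psi_{Q_2}$ as a composition of centroaffine transformations, and since an arbitrary affine transformation is $\Psi_{A}\circ\Psi_{\mathcal{T}} = \Psi_{Q_1}\circ\Psi_{\Lambda}\circ\Psi_{Q_2}\circ\Psi_{\mathcal{T}}$, the key observation is that the diagonal positive definite factor $\Psi_{\Lambda}$ can be "absorbed'' without affecting the kinetic/nonkinetic classification. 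First I would record the elementary fact (analogous to Example~\ref{ex:xf} and the remark on $x$-factorability) that if $\Lambda = \mathrm{diag}(\lambda_1,\dots,\lambda_S)$ with all $\lambda_n > 0$, then under $\bar{x} = \Lambda x$ the sign structure relevant to cross-negativity is preserved: a term $f_{sr}(x)$ is cross-negative for the original function if and only if the correspondingly scaled term is cross-negative for $(\Psi_{\Lambda}\mathcal{P})(\bar{x};\bar{k})$, because $\Lambda$ maps the nonnegative orthant onto itself, maps each coordinate hyperplane $\{x_s = 0\}$ onto itself, and multiplies the $s$-th component of the vector field by the positive scalar $\lambda_s$. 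Hence $\Psi_{\Lambda}\mathcal{P}$ is kinetic if and only if $\mathcal{P}$ is kinetic.

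Next I would combine this with the fact (already noted after Definition~\ref{def:essential}, citing~\cite{Escher}) that translations $\Psi_{\mathcal{T}}$ and the orthogonal factors can be regrouped freely, since composition of affine maps is affine and the classification only cares about the final function. Concretely: suppose, for contradiction, that $\mathcal{P}$ is \emph{not} affinely nonkinetic given $\Phi$, i.e. there exist a nonsingular $A$ and a translation $\mathcal{T}$ such that $(\Psi_A \circ \Psi_{\mathcal{T}}\mathcal{P})(\bar{x};\bar{k})$ is kinetic \emph{and} $\Phi$ holds for $\bar{k}$. Factor $A = Q_1 \Lambda Q_2$ by SVD. Writing $\tilde{\mathcal{P}} = \Psi_{\Lambda}\circ\Psi_{Q_2}\circ\Psi_{\mathcal{T}'}\mathcal{P}$ for a suitably chosen translation $\mathcal{T}'$ (absorbing $\Psi_{Q_2}\circ\Psi_{\mathcal{T}}$ into an orthogonal-then-translate form, which is again of the type $\Psi_{Q_2}\circ\Psi_{\mathcal{T}'}$ up to reordering that leaves the function unchanged), one has $(\Psi_{Q_1}\tilde{\mathcal{P}})(\bar{x};\bar{k})$ kinetic. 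By the previous paragraph, removing the $\Psi_{\Lambda}$ factor does not change kinetic-ness, so $(\Psi_{Q_1}\circ\Psi_{Q_2}\circ\Psi_{\mathcal{T}'}\mathcal{P})$ — an \emph{orthogonal}-type transformation composed with a translation — is also kinetic. This contradicts the hypothesis that $\mathcal{P}$ is nonkinetic under every $\Psi_Q \circ \Psi_{\mathcal{T}}$, \emph{provided} the constraint set $\Phi$ is still satisfied after removing $\Psi_{\Lambda}$; that is exactly where the extra hypothesis $\mathrm{sign}(\phi_j(k)) = \mathrm{sign}(\phi_j(\bar{k}))$ for all diagonal positive definite $\Lambda$ enters, ensuring $\phi_j \ge 0$ is preserved when the $\Lambda$-induced change of rate constants is undone.

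The bookkeeping obstacle — and the step I expect to need the most care — is the precise interaction between the orthogonal factors and the translation: commuting $\Psi_{Q_2}$ past $\Psi_{\mathcal{T}}$ produces $\Psi_{\mathcal{T}}\circ\Psi_{Q_2}$ with a transformed shift vector, and one must verify that after the SVD regrouping the composite is genuinely of the form "orthogonal transformation then translation'' as required by the hypothesis of the theorem (i.e. matching the order $\Psi_{Q}\circ\Psi_{\mathcal{T}}$), rather than some order that the hypothesis does not cover. This is handled by noting that $\Psi_A \circ \Psi_{\mathcal{T}}$ for nonsingular $A$ already ranges over \emph{all} affine maps, so one may as well write the candidate kinetic affine map from the start as $\Psi_{Q_1}\circ\Psi_{\Lambda}\circ\Psi_{Q_2}\circ\Psi_{\mathcal{T}}$ and then absorb $\Psi_{Q_2}\circ\Psi_{\mathcal{T}}$ into a single $\Psi_{Q_2}\circ\Psi_{\mathcal{T}'}$ using that orthogonal-then-translate is closed under composition with a further translate on the right. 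The remaining verifications — that $\Psi_{\Lambda}$ preserves cross-negativity and that, under the stated sign hypothesis, it preserves the validity of $\Phi$ — are then routine, and the contrapositive yields the claim.
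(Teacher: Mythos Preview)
Your proposal takes essentially the same approach as the paper: factor an arbitrary nonsingular $A$ via the singular value decomposition $A = Q_1 \Lambda Q_2$, use that cross-negative terms are invariant under $\Psi_{\Lambda}$ for diagonal positive definite $\Lambda$ (the paper cites \cite{Toth4} for this), and invoke the sign-invariance hypothesis on the constraints to conclude. The paper's proof is considerably terser---it simply states the SVD, the $\Psi_{\Lambda}$-invariance of cross-negative terms, and the constraint condition, then declares that ``the statement of the theorem follows''---whereas you spell out the contrapositive and flag the bookkeeping with the translation, but the underlying argument is identical.
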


\begin{proof}
By the singular value decomposition theorem, nonsingular matrices 
$A \in \mathbb{R}^{S \times S}$ can be written as $A = Q_1 \Lambda Q_2$, 
where $Q_1, Q_2 \in \mathbb{R}^{S \times S}$ are orthogonal, 
and  $\Lambda \in \mathbb{R}^{S \times S}$ diagonal and 
positive definite. Cross-negative terms are invariant under 
transformation $\Psi_{\Lambda}$ for all $\Lambda$~\cite{Toth4}. 
If $\Phi$ from Definition~\ref{def:constraints} is such that 
functions $\mathrm{sign}(\phi_j(k))$, $j = 1, 2, \ldots, J$, 
are invariant under all positive definite diagonal matrices 
$\Lambda \in \mathbb{R}^{S \times S}$, the statement of the 
theorem follows. 
\end{proof} 

\subsubsection{X-factorable transformation} 
\label{sec:xft}
\begin{definition} \label{def:xft}
Consider multipling the RHS of equation~(\ref{eqn:nkv}) by a 
diagonal matrix 
$\mathcal{X}(\mathbf{x}) = \mathrm{diag}(x_1, x_2, \ldots, x_S)$, 
resulting in
\begin{align}
\frac{\mathrm{d} \mathbf{x}}{\mathrm{d} t} & 
= \mathcal{X}(\mathbf{x}) 
\boldsymbol{\mathcal{P}}(\mathbf{x}; \, \mathbf{k})
\equiv
(\Psi_{\mathcal{X}} \boldsymbol{\mathcal{P}})(\mathbf{x}; \, \mathbf{k}).
\label{eqn:xft} 
\end{align}
Then 
$\Psi_{\mathcal{\mathcal{X}}}: \mathbb{P}_{m}(\mathbb{R}^{S}; 
\, \mathbb{R}^{S}) \to \mathbb{P}_{m + 1}(\mathbb{R}^{S}; \, \mathbb{R}^{S})$, 
mapping $\boldsymbol{\mathcal{P}}(\mathbf{x}; \, \mathbf{k})$ to 
$(\Psi_{\mathcal{X}} \boldsymbol{\mathcal{P}})(\mathbf{x}; \, \mathbf{k})$, 
is called an \emph{$x$-factorable transformation}. If $\mathcal{X}$ is diagonal
and its nonzero elements are
\begin{align}
\mathcal{X}_{s s} = \begin{cases}
x_s, & \textrm{if } s \in \mathcal{S'},\\
1, & \textrm{if } s \in \mathcal{S} \setminus \mathcal{S'}, \nonumber
\end{cases}
\end{align}
 where $\mathcal{S'} \subset \mathcal{S}$, 
$\mathcal{S'} \ne \emptyset$, then the transformation is denoted
$\Psi_{\mathcal{X}_{\mathcal{S'}}}$, and is said to be $x_{\mathcal{S'}}$-factorable.
\end{definition}

\noindent When $\mathcal{X} \in \mathbb{R}^2$ is $x_1$-factorable, i.e. 
$\mathcal{X}(x_1) = \mathrm{diag}(x_1,1)$, we write $\Psi_{\mathcal{X}_{1}} \equiv \Psi_{\mathcal{X}_{\{1\}}}$.

\begin{theorem} \label{thm:xfact}
$(\Psi_{\mathcal{X}} \boldsymbol{\mathcal{P}})(\mathbf{x}; \, \mathbf{k})$ 
from \emph{Defnition~\ref{def:xft}} is a \emph{kinetic function}, i.e. 
$(\Psi_{\mathcal{X}} \boldsymbol{\mathcal{P}})(\mathbf{x}; 
\, \mathbf{k}) \in \mathbb{P}^{\mathcal{K}}_{m + 1}(\mathbb{R}_{\ge}^{S}; \, 
\mathbb{R}^{S})$.
\end{theorem}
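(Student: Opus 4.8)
The plan is to prove the statement constructively: given $\boldsymbol{\mathcal{P}}(\mathbf{x}; \, \mathbf{k})$ in mass action form, I would exhibit an explicit mass action reaction system whose induced kinetic function (Definition~\ref{def:kinfun}) is exactly $(\Psi_{\mathcal{X}}\boldsymbol{\mathcal{P}})(\mathbf{x}; \, \mathbf{k})$, and separately bound its polynomial degree. The degree bound is the easy step and I would dispatch it first: the $n$-th component of $\mathcal{X}(\mathbf{x})\,\boldsymbol{\mathcal{P}}(\mathbf{x}; \, \mathbf{k})$ equals $x_n\,\mathcal{P}_n(\mathbf{x}; \, \mathbf{k})$, a product of $x_n$ with a polynomial of degree $\le m$, hence a polynomial of degree $\le m+1$; thus $(\Psi_{\mathcal{X}}\boldsymbol{\mathcal{P}})(\mathbf{x}; \, \mathbf{k}) \in \mathbb{P}_{m+1}(\mathbb{R}^{S}; \, \mathbb{R}^{S})$, and it remains to check it is kinetic.

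For that, I would expand $\mathcal{P}_n(\mathbf{x}; \, \mathbf{k}) = \sum_{\alpha} a_{n\alpha}\,\mathbf{x}^{\alpha}$ over multi-indices $\alpha \in \mathbb{N}^{\mathcal{S}}$ with $|\alpha| \le m$ and $a_{n\alpha} \ne 0$, so that component $n$ of the transformed right-hand side is $\sum_{\alpha} a_{n\alpha}\,\mathbf{x}^{\alpha + e_n}$, where $e_n$ denotes the multi-index with a single $1$ in the $s_n$ slot. This is precisely a representation of the form assumed in Definition~\ref{def:cn}, with mass action monomials $\mathbf{x}^{\alpha + e_n}$, reactant complexes $c_{n\alpha} = \alpha + e_n$, and coefficient vectors $d_{n\alpha} = a_{n\alpha}\,e_n$. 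Applying the canonical construction of Definition~\ref{def:cn} then produces, for each such pair $(n,\alpha)$, the reaction $\alpha + e_n \to \alpha + e_n + \mathrm{sign}(a_{n\alpha})\,e_n$ with mass action rate $|a_{n\alpha}|\,\mathbf{x}^{\alpha + e_n}$; summing the induced contributions $\mathrm{sign}(a_{n\alpha})\,e_n \cdot |a_{n\alpha}|\,\mathbf{x}^{\alpha + e_n} = a_{n\alpha}\,\mathbf{x}^{\alpha + e_n}\,e_n$ over all $(n,\alpha)$ recovers $(\Psi_{\mathcal{X}}\boldsymbol{\mathcal{P}})(\mathbf{x}; \, \mathbf{k})$ exactly. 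I would then verify the reaction-network axioms of Definition~\ref{def:rn}: all stoichiometric coefficients are nonnegative integers (see below), no reaction has the form $c \to c$ since $c' - c = \pm e_n \ne 0$, and the covering condition $\bigcup_c \mathrm{supp}(c) = \mathcal{S}$ holds automatically unless some $\mathcal{P}_n$ vanishes identically, a degenerate case in which $s_n$ may simply be omitted. Concluding: $(\Psi_{\mathcal{X}}\boldsymbol{\mathcal{P}})(\mathbf{x}; \, \mathbf{k})$ is the kinetic function of a mass action reaction system, hence $(\Psi_{\mathcal{X}}\boldsymbol{\mathcal{P}})(\mathbf{x}; \, \mathbf{k}) \in \mathbb{P}^{\mathcal{K}}_{m+1}(\mathbb{R}_{\ge}^{S}; \, \mathbb{R}^{S})$. (Alternatively one could observe that the transformed system is $x$-factorable in the sense of Definition~\ref{def:xf}, since $f_n(\mathbf{x}) = x_n\sum_{\alpha} a_{n\alpha}\mathbf{x}^{\alpha}$, and invoke the fact recalled in Section~\ref{sec:xft} that $x$-factorable systems are kinetic.)

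The one step that genuinely requires the $\mathcal{X}(\mathbf{x})$ prefactor, and which I expect to be the crux, is the nonnegativity of the product complexes when $a_{n\alpha} < 0$. For an arbitrary polynomial right-hand side, a negative monomial in equation $n$ would force Definition~\ref{def:cn} to subtract a copy of $s_n$ from a reactant complex that need not contain one, yielding a complex with a negative coefficient — equivalently, the term would be cross-negative in the sense of Definitions~\ref{def:cnt} and~\ref{def:kinetics}. After multiplication by $\mathcal{X}(\mathbf{x})$, however, every monomial appearing in component $n$ is of the form $\mathbf{x}^{\alpha + e_n}$, so its reactant complex $\alpha + e_n$ contains at least one copy of $s_n$; subtracting $e_n$ still yields a complex in $\mathbb{N}^{\mathcal{S}}$, and, dually, the term $a_{n\alpha}\mathbf{x}^{\alpha + e_n}$ vanishes whenever $x_n = 0$, so no cross-negative term can appear. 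In short, $x$-factorization converts each potential cross-negative term into a legitimate decrement of a species guaranteed present in the reactant complex; the rest of the argument is bookkeeping.
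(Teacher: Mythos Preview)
Your argument is correct. The key observation is exactly the one you isolate: after multiplying component $n$ by $x_n$, every monomial in that component carries $x_n$ as a factor, so setting $x_n=0$ annihilates every term and no cross-negative term (Definition~\ref{def:cnt}) can survive; the explicit canonical-network construction and the degree bound are then routine.

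As for comparison with the paper: there is essentially nothing to compare against, since the paper's own proof of this theorem is a bare citation to~\cite{Xfactor}. Your write-up is therefore strictly more self-contained than what the paper offers. One small remark: the detour through Definition~\ref{def:cn} and the full reaction-network axioms is more than you need---by Definition~\ref{def:funspaces} a polynomial function is kinetic precisely when it has no cross-negative terms, so once you have shown that every monomial in component $n$ is divisible by $x_n$ (your final paragraph), you are already done. The explicit network is a nice illustration but not logically required.
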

\begin{proof}
See~\cite{Xfactor}.
\end{proof}

\noindent
Functions $\boldsymbol{\mathcal{P}}(\mathbf{x}; \, \mathbf{k})$ 
and $(\Psi_{\mathcal{X}} \boldsymbol{\mathcal{P}})(\mathbf{x}; \, \mathbf{k})$ 
are not necessarily topologically equivalent due to two overlapping 
artefacts that $\Psi_{\mathcal{X}}$ can produce, so that $\Psi_{\mathcal{X}}$ 
is generally a nonkinetic transformation. Firstly, the fixed points 
of the former system can change the type and/or stability under
$\Psi_{\mathcal{X}}$, and, secondly, the latter system has an 
additional finite number of boundary fixed points. The following 
theorem specifies the details of the artefacts for two-dimensional 
systems.

\begin{theorem}\label{Xfactdet}
Let us consider the ODE system~$(\ref{eqn:nkv})$ in two 
dimensions with RHS
$
\boldsymbol{\mathcal{P}}(\mathbf{x}; \, \mathbf{k})
= 
(\mathcal{P}_1(\mathbf{x}; \, \mathbf{k}),
 \mathcal{P}_2(\mathbf{x}; \, \mathbf{k}))^{\top}.
$
The following statements are true for all the fixed 
points $\mathbf{x}^{*}$ of the two-dimensional system~$(\ref{eqn:nkv})$ 
in $\mathbb{R}_{>}^2$ under $\Psi_{\mathcal{X}_{\mathcal{S'}}}$, 
$\mathcal{S'} \subseteq \mathcal{S}$, $\mathcal{S'} \ne \emptyset$:
\begin{enumerate}
\item[\emph{(i)}] 
All the saddle fixed points are unconditionally invariant,
i.e. saddle points of~$(\ref{eqn:nkv})$ correspond to saddle
points of~$(\ref{eqn:xft})$.
\item[\emph{(ii)}]  
A sufficient condition for stability of a fixed point 
$\mathbf{x}^{*}$ to be invariant is: 
$$
\frac{
\partial \mathcal{P}_1(\mathbf{x}; \, \mathbf{k})
}{\partial x_1}|_{\mathbf{x}
= \mathbf{x}^{*}} 
\frac{
\partial \mathcal{P}_2(\mathbf{x}; \, \mathbf{k})
}{\partial x_2}|_{\mathbf{x} = \mathbf{x}^{*}} \ge 0
.$$
\item[\emph{(iii)}] 
A sufficient condition for the type of a fixed point 
$\mathbf{x}^{*}$ to be invariant is: 
$$
\frac{
\partial \mathcal{P}_1(\mathbf{x}; \, \mathbf{k})
}{\partial x_2}|_{\mathbf{x} = \mathbf{x}^{*}} 
\frac{
\partial \mathcal{P}_2(\mathbf{x}; \, \mathbf{k})
}{\partial x_1}|_{\mathbf{x} = \mathbf{x}^{*}} 
\ge 0.
$$
\end{enumerate}
Assume that the ODE system~$(\ref{eqn:nkv})$ 
does not have fixed points on the axes of the phase space.
Nevertheless, the two-dimensional system~$(\ref{eqn:xft})$ can have 
additional fixed points on the axes of the phase space, called 
\emph{boundary fixed points}, denoted 
$\mathbf{x}_{b}^{*} \in \mathbb{R}_{\ge}^{2}$.
The boundary fixed points can be either nodes or saddles, 
and the following statements are true:
\begin{enumerate}
\item[\emph{(iv)}] 
If system~$(\ref{eqn:xft})$ is $x$-factorable, then the 
origin is a fixed point, $\mathbf{x}_{b}^{*} = \mathbf{0}$, with 
eigenvalues 
$\lambda_i = \mathcal{P}_i(\mathbf{x}_{b}^{*}; \, \mathbf{k}) \ne 0$, 
$i = 1,2$, and the corresponding eigenvectors along the phase 
space axes. 
\item[\emph{(v)}] 
For $x_{b, i}^* = 0$, $x_{b, j}^* \ne 0$, 
$\mathbf{x}_{b}^{*} \in \mathbb{R}_{\ge}^2$, 
$i,j =1,2,$ $i \ne j$, a boundary fixed 
point is a node if and only 
if 
$$
\mathcal{P}_i(\mathbf{x}_{b}^{*}; \, \mathbf{k}) 
\frac{
\partial \mathcal{P}_{j}(\mathbf{x}; \, \mathbf{k})
}{\partial x_{j}}|_{\mathbf{x} = \mathbf{x}_{b}^{*}} > 0,
$$ 
with the node being stable if 
$\mathcal{P}_i(\mathbf{x}_{b}^{*}; \, \mathbf{k}) < 0$, 
and unstable if $\mathcal{P}_i(\mathbf{x}_{b}^{*}; \, \mathbf{k}) > 0$, 
$i,j = 1, 2$, $i \ne j$. Otherwise, the fixed point is a saddle.  
\end{enumerate}
\end{theorem}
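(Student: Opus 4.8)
The plan is to compute the Jacobian of the $x$-factorable system~(\ref{eqn:xft}) at the relevant fixed points and compare it to the Jacobian of~(\ref{eqn:nkv}), exploiting the product structure $\widetilde{\mathcal{P}}_i(\mathbf{x};\,\mathbf{k}) = \mathcal{X}_{ii}(\mathbf{x})\,\mathcal{P}_i(\mathbf{x};\,\mathbf{k})$. For a fixed point $\mathbf{x}^{*} \in \mathbb{R}_{>}^2$ of~(\ref{eqn:nkv}), every factor $\mathcal{X}_{ii}(\mathbf{x}^{*})$ is strictly positive, and since $\mathcal{P}_i(\mathbf{x}^{*};\,\mathbf{k}) = 0$ the product rule kills the term differentiating $\mathcal{X}_{ii}$. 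Hence $J_{\Psi}(\mathbf{x}^{*};\,\mathbf{k}) = D\,J(\mathbf{x}^{*};\,\mathbf{k})$ where $D = \mathrm{diag}(\mathcal{X}_{11}(\mathbf{x}^{*}),\mathcal{X}_{22}(\mathbf{x}^{*}))$ is positive definite. For (i), a saddle is characterized by $\det J(\mathbf{x}^{*};\,\mathbf{k}) < 0$; since $\det(D J) = \det(D)\det(J)$ and $\det D > 0$, the sign of the determinant is preserved, so a saddle stays a saddle unconditionally — no further hypotheses needed. For (ii) and (iii), I will write out the $2\times2$ matrix $D J$ explicitly with entries $d_1 a_{11}, d_1 a_{12}, d_2 a_{21}, d_2 a_{22}$ where $a_{ij} = \partial \mathcal{P}_i/\partial x_j|_{\mathbf{x}^{*}}$ and $d_i = \mathcal{X}_{ii}(\mathbf{x}^{*}) > 0$. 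Stability is governed by the trace $d_1 a_{11} + d_2 a_{22}$ and the determinant $d_1 d_2(a_{11}a_{22} - a_{12}a_{21})$; the type (node vs.\ focus) by the discriminant $(d_1 a_{11} - d_2 a_{22})^2 + 4 d_1 d_2 a_{12} a_{21}$. The condition in (ii), $a_{11} a_{22} \ge 0$, guarantees that rescaling the diagonal entries by positive $d_i$ cannot flip the sign of the trace (the two diagonal contributions do not change sign relative to each other) nor of $a_{11}a_{22}$ inside the determinant; combined with the determinant sign argument this pins down invariance of stability. The condition in (iii), $a_{12}a_{21} \ge 0$, makes the discriminant manifestly nonnegative both before and after the rescaling, so a node cannot become a focus or vice versa.

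For the boundary fixed points, the plan is a direct case analysis. Statement (iv): if~(\ref{eqn:xft}) is fully $x$-factorable then $\widetilde{\mathcal{P}}_i(\mathbf{0};\,\mathbf{k}) = 0 \cdot \mathcal{P}_i(\mathbf{0};\,\mathbf{k}) = 0$, so $\mathbf{0}$ is a fixed point; differentiating $x_i \mathcal{P}_i$ and evaluating at the origin leaves only $\mathcal{P}_i(\mathbf{0};\,\mathbf{k})$ on the diagonal and $0$ off-diagonal (the off-diagonal term $x_i \,\partial \mathcal{P}_i/\partial x_j$ vanishes at $x_i = 0$), giving a diagonal Jacobian with eigenvalues $\mathcal{P}_i(\mathbf{0};\,\mathbf{k})$ along the coordinate axes; nonvanishing follows from the assumption that~(\ref{eqn:nkv}) has no fixed points on the axes. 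Statement (v): for $x_{b,i}^{*} = 0$, $x_{b,j}^{*} \ne 0$, the $j$-th equation may or may not be $x_j$-factorable, but at this point only $\mathcal{X}_{ii}$ could vanish; computing the Jacobian of $\widetilde{\mathcal{P}}$ at $\mathbf{x}_b^{*}$, the $i$-th row contributes a diagonal entry $\mathcal{P}_i(\mathbf{x}_b^{*};\,\mathbf{k})$ (from differentiating $x_i$) and the $j$-th row contributes $\partial \mathcal{P}_j/\partial x_j|_{\mathbf{x}_b^{*}}$ scaled by $\mathcal{X}_{jj}(\mathbf{x}_b^{*})$, while the $(i,j)$ off-diagonal entry vanishes because it carries a factor $x_i = 0$; this makes the Jacobian block triangular with the two eigenvalues $\mathcal{P}_i(\mathbf{x}_b^{*};\,\mathbf{k})$ and $\mathcal{X}_{jj}(\mathbf{x}_b^{*})\,\partial \mathcal{P}_j/\partial x_j|_{\mathbf{x}_b^{*}}$. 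Since $\mathcal{X}_{jj}(\mathbf{x}_b^{*}) > 0$ (it is either $1$ or $x_j^{*} > 0$), the product of eigenvalues has the sign of $\mathcal{P}_i(\mathbf{x}_b^{*};\,\mathbf{k})\,\partial \mathcal{P}_j/\partial x_j|_{\mathbf{x}_b^{*}}$: positive gives a node (both eigenvalues same sign), negative gives a saddle; and the node is stable exactly when the eigenvalue $\mathcal{P}_i(\mathbf{x}_b^{*};\,\mathbf{k})$ — and hence both — is negative.

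The main obstacle I anticipate is bookkeeping rather than conceptual: making the Jacobian computation in (v) uniform over the subcases $\mathcal{S'} = \mathcal{S}$ versus $\mathcal{S'} \subsetneq \mathcal{S}$ (i.e.\ whether the $j$-th equation is itself $x_j$-factorable), since the factor $\mathcal{X}_{jj}(\mathbf{x}_b^{*})$ is $x_j^{*}$ in one case and $1$ in the other — but in both cases it is strictly positive, so the sign conclusions are unaffected, and I will state this explicitly to close the case split. A secondary subtlety is that conditions (ii) and (iii) are only sufficient: the borderline situations (zero trace, zero determinant, zero discriminant in the original system) are genuinely not controlled, and I will note that the claims are deliberately one-directional there, so no converse is asserted.
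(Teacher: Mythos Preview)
Your proposal is correct and follows essentially the same approach as the paper: compute the Jacobian of the transformed system, recognize that at interior fixed points it equals a positive diagonal matrix times the original Jacobian (the paper writes this out explicitly via a parameter $p\in\{0,1\}$ rather than abstractly as $DJ$), and then compare trace, determinant, and discriminant to obtain (i)--(iii); for (iv)--(v) both you and the paper observe that the Jacobian at a boundary fixed point is (lower) triangular with the stated diagonal entries. Your write-up is in fact more explicit than the paper's, which simply asserts ``comparing the trace, determinant and discriminant \dots\ we deduce (i)--(iii)'' without spelling out the sign arguments you give.
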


\begin{proof}
Without loss of generality, we consider two forms of
the system~(\ref{eqn:xft}) with $S = 2$:
\begin{align}
\frac{d x_1}{d t} & 
= x_1 \mathcal{P}_1(\mathbf{x}; \, \mathbf{k}), 
\label{Xfact2Da}
\\ 
\frac{d x_2}{d t} & 
= x_2^p \mathcal{P}_2(\mathbf{x}; \, \mathbf{k})
\label{Xfact2Db},
\end{align}
where $p \in \{0,1\}$, so that system~(\ref{Xfact2Da})--(\ref{Xfact2Db}) 
is $x$-factorable for $p = 1$, but only $x_1$-factorable for $p = 0$. 
The results derived for an $x_1$-factorable system hold when 
the system is $x_2$-factorable, if the indices are swapped. 
By writing $\mathcal{P}_i(\mathbf{x}; \, \mathbf{k}) = \mathcal{P}_i$, 
$i = 1,2$, the Jacobian of~(\ref{Xfact2Da})--(\ref{Xfact2Db}),
$J_{\mathcal{X}}$, is for $p \in \{0,1\}$ given by 
$$
J_{\mathcal{X}}(\mathbf{x}) =  \left(\begin{array}{cc}
\mathcal{P}_1 + x_1 \frac{\partial \mathcal{P}_1}{\partial x_1}  
& x_1 \frac{\partial \mathcal{P}_1}{\partial x_2} \label{JacobX1} 
\\
\rule{0pt}{4mm}
x_2^p \frac{\partial \mathcal{P}_2}{\partial x_1}  
& p \mathcal{P}_2 + x_2^p \frac{\partial \mathcal{P}_2}{\partial x_2} 
\end{array}
\right).
$$
First, consider how fixed points of
$\boldsymbol{\mathcal{P}}(\mathbf{x}; \, \mathbf{k})$ 
are affected by transformation $\Psi_{\mathcal{X}_{\mathcal{S'}}}$.
Denoting the Jacobian of two-dimensional system~(\ref{eqn:nkv}) by $J$, 
and assuming the fixed points are not on the axes of the 
phase space (i.e. $\mathbf{x}^* \in \mathbb{R}_{>}^2$), the Jacobians 
evaluated at $\mathbf{x}^*$ are given by:
$$
J(\mathbf{x}^*) 
=  \left(\begin{array}{cc}
\frac{\partial \mathcal{P}_1}{\partial x_1}  
&  
\frac{\partial \mathcal{P}_1}{\partial x_2}
\\
\rule{0pt}{4mm}
\frac{\partial \mathcal{P}_2}{\partial x_1}  
&  \frac{\partial \mathcal{P}_2}{\partial x_2} 
\end{array}
\right)\Big|_{\mathbf{x} = \mathbf{x}^*}, \qquad
J_{\mathcal{X}}(\mathbf{x}^*)  =  \left(\begin{array}{cc}
x_1 \frac{\partial \mathcal{P}_1}{\partial x_1}  
& x_1 \frac{\partial \mathcal{P}_1}{\partial x_2} \label{JacobX2} 
\\
\rule{0pt}{4mm}
x_2^p \frac{\partial \mathcal{P}_2}{\partial x_1}  
& x_2^p \frac{\partial \mathcal{P}_2}{\partial x_2} 
\end{array}
\right)\Big|_{\mathbf{x} = \mathbf{x}^*}.
$$
Comparing the trace, determinant and discriminant of $J(\mathbf{x}^*)$ 
and $J_{\mathcal{X}}(\mathbf{x}^*)$, we deduce (i)--(iii).

To prove (iv)--(v), we evaluate $J_{\mathcal{X}}$ at the boundary 
fixed points of the form $\mathbf{x}_{b}^* = (0,x_{b, 2}^*)$ to get
\begin{align}
J_{\mathcal{X}}(\mathbf{x}_{b}^*) 
& =  \left(\begin{array}{cc}
\mathcal{P}_1 & 0  
\\
x_2^p \frac{\partial \mathcal{P}_2}{\partial x_1} 
& \; \; p \mathcal{P}_2 + x_2^p \frac{\partial \mathcal{P}_2}{\partial x_2} 
\end{array}
\right)\Big|_{\mathbf{x} = \mathbf{x}_{b}^*}.
\label{boundjac}
\end{align}
If $p = 1$, then one of the boundary fixed points is 
$\mathbf{x}_{b}^* = \mathbf{0}$, and the Jacobian becomes 
a diagonal matrix, so that condition (iv) holds.
If $x_{b, 2}^* \ne 0$, then $\mathcal{P}_2(0,x_{b, 2}^*;\mathbf{k})=0$
in (\ref{boundjac}), and comparing the trace, determinant and discriminant of $J(\mathbf{x}^*)$ 
and $J_{\mathcal{X}}(\mathbf{x}_b^*)$, we deduce (v).
\end{proof}

\noindent
Theorem~\ref{Xfactdet} can be used to find conditions that an $x$-factorable 
transformation given by $\Psi_{\mathcal{\mathcal{X}}}: \mathbb{P}_{m}(\mathbb{R}^{2}; 
\, \mathbb{R}^{2}) \to \mathbb{P}_{m + 1}(\mathbb{R}^{2}; \, \mathbb{R}^{2})$ 
is a kinetic transformation. While conditions (ii)--(iii) in Theorem~\ref{Xfactdet} 
may be violated when $\Psi_{\mathcal{X}}$ is used, so that $\Psi_{\mathcal{X}}$ 
is a nonkinetic transformation, a composition of an affine transformation and 
an $x$-factorable transformation, i.e. 
$\Psi_{\mathcal{X}, A,\mathcal{T}} =
\Psi_{\mathcal{X}} \circ \Psi_{A} \circ \Psi_{\mathcal{T}}$, 
may be kinetic. Furthermore, such a composite transformation may also be used 
to control the boundary fixed points introduced by $\Psi_{\mathcal{X}}$. 
Finding an appropriate $A$ and $\mathcal{T}$ to ensure the topological 
equivalence near the fixed points typically means that the region of 
interest in the phase space has to be positioned at a sufficient 
distance from the axes. However, since the introduced boundary fixed 
points may be saddles, this implies that the phase curves may 
be significantly globally changed, regardless of how far away 
from the axes they are. The most desirable outcome of controlling 
the boundary fixed points is to eliminate them, or shift them 
outside of the nonnegative orthant. The former can be attempted 
by ensuring that the nullclines of the original ODE system~(\ref{eqn:nkv}) 
do not intersect the axes of the phase space, while the 
latter by using the Routh-Hurwitz theorem~\cite{Matrices}.

An alternative transformation, which is always kinetic, that also 
does not change the dimension of an ODE system is 
the time-parametrization transformation~\cite{QSSA4}.
However, while $\Psi_{\mathcal{X}}$ increases the polynomial degree 
by one, and introduces only a finite number of boundary fixed points 
which are given as solutions of suitable polynomials, the 
time-parameterization transformation generally increases the 
nonlinearity degree more than $\Psi_{\mathcal{X}}$,
and introduces infinitely many boundary fixed points.

\subsubsection{The quasi-steady state transformation} 
\label{sec:QSSA}

The quasi-steady state assumption (QSSA) is 
a popular \emph{constructive} method for reducing dimension 
of ODE systems by assuming that, at a given time-scale, some 
of the species reach a quasi-steady state, so that they can 
be described by algebraic, rather than differential equations. 
The QSSA is based on Tikhonov's theorem~\cite{QSSA1,QSSA2} 
that specifies conditions ensuring that the solutions of the 
reduced system are asymptotically equivalent to the solutions 
of the original system. The original system is referred to as 
the \emph{total system}, and it consists of the reduced subsystem, 
referred to as the \emph{degenerate system}, and the remaining 
subsystem, called the \emph{adjointed system}, so that the QSSA 
consists of replacing the total system with the degenerate one, 
by eliminating the adjointed system. Korzukhin's 
theorem~\cite{QSSA1,QSSA2} is an \emph{existence} result 
ensuring that, given any polynomial degenerate system, there exists 
a corresponding total system that is kinetic. 

Thus, Tikhonov's theorem can be seen as a constructive direct 
asymptotic dimension reduction procedure, while Korzukhin's theorem 
as an inverse asymptotic dimension expansion existence result.
Korzukhin's theorem has an important implication that an
application of the QSSA can result in a degenerate system that 
is \emph{structurally} different than the corresponding total 
system. In this paper, the QSSA 
is assumed to necessarily be compatible with Tikhonov's theorem.
If this is not the case, then it has been demonstrated
in~\cite{QSSA4,QSSA5} that application of a QSSA 
can create dynamical artefacts, i.e. it can result in degenerate systems, 
not only structurally different, but also \emph{dynamically} different 
from the total systems. The artefacts commonly occur due to  
the asymptotic parameters in Tikhonov's theorem not being sufficiently 
small. For example, it has been shown that exotic phenomena such as
multistability and oscillations can exist in a degenerate system, 
while not existing in the corresponding total system~\cite{QSSA4,QSSA5}. 

Using Korzukhin's and Tikhonov's theorems, a family of kinetic total 
systems for an arbitrary nonkinetic polynomial degenerate system 
can be constructed, as is now shown. For simplicity,
we denote
$
x^c = \prod_{s \in S} x_s^{c_s},
$
for any $x \in \mathbb{R}^{\mathcal{S}}$ and 
$c \in \mathbb{N}^{\mathcal{S}}.$

\begin{definition} \label{def:QSSA}
Consider equation~(\ref{eqn:nk}), and assume that the reaction set 
is partitioned, $\mathcal{R} = \mathcal{R}_1 \cup \mathcal{R}_2$,
$\mathcal{R}_1 \cap \mathcal{R}_2 = \emptyset$, so that~(\ref{eqn:nk}),
together with the initial conditions, can be written as
\begin{align}
\frac{\mathrm{d} x_s}{\mathrm{d} t} 
& =  \sum_{r \in \mathcal{R}_1} a_{s r} x^{\alpha_{s r}} 
- 
\sum_{r \in \mathcal{R}_2} b_{s r} x^{\beta_{s r}}, 
\qquad \mbox{for} \; s \in \mathcal{S},
\label{gpolya} \\
x_s(t_0) & = x_s^0, \qquad x_s^0 \ge 0,
\label{gpolyb} 
\end{align}
where $x \in \mathbb{R}^{\mathcal{S}}$, 
$\alpha_{s r} \in \mathbb{N}^{\mathcal{S}},$ 
$\beta_{s r} \in \mathbb{N}^{\mathcal{S}},$ 
$\alpha_{s r} \ne \beta_{s r}$, 
$a_{s r} \in \mathbb{R}_{\ge}$
and 
$b_{s r} \in \mathbb{R}_{\ge}$ 
for all $s \in \mathcal{S}$ 
and $r \in \mathcal{R}$.
Assume further that the species set is partitioned, 
$\mathcal{S} = \mathcal{S}_1 \cup \mathcal{S}_2$, 
$\mathcal{S}_1 \cap \mathcal{S}_2 = \emptyset$, so that equations 
for species $s \in \mathcal{S}_1$ are kinetic, while those for 
species $s \in \mathcal{S}_2$ are nonkinetic. Consider the 
following total system, consisting of a degenerate system given by
\begin{align}
\frac{\mathrm{d}  x_s}{\mathrm{d}  t} 
& =  
\sum_{r \in \mathcal{R}_1} a_{s r} x^{\alpha_{s r}} 
- \sum_{r \in \mathcal{R}_2} b_{s r} x^{\beta_{s r}}, 
& \mbox{for} \; s \in \mathcal{S}_1, 
\label{gkin21a} \\
\frac{\mathrm{d}  x_s}{\mathrm{d}  t} & 
=   \sum_{r \in \mathcal{R}_1} a_{s r} x^{\alpha_{s r}} 
- \omega_s^{-1} x_s p_s(x) y_s \sum_{r \in \mathcal{R}_2} b_{s r} 
x^{\beta_{s r}},
& \mbox{for} \; s \in \mathcal{S}_2, 
\label{gkin21b} 
\end{align}
which satisfies the initial condition (\ref{gpolyb}) with
$x_s^0 > 0$ for $s \in \mathcal{S}_2$, and an
adjointed system given by
\begin{align}
\mu \frac{\mathrm{d}  y_s}{\mathrm{d}  t} & 
= \omega_s - x_s p_s(x) y_s , 
& \mbox{for} \; s \in \mathcal{S}_2, 
\label{gkin22a}
\\
y_s(t_0) &= y_s^0, \qquad y_s^0 \ge 0,  
& \mbox{for} \; s \in \mathcal{S}_2, 
\label{gkin22b} 
\end{align}
where $\mu > 0$, $\omega_s > 0$ are parameters, 
and $p(x)$ is a polynomial function of $x$ satisfying
$p(x) \in \mathbb{P}_{m_0}(\mathbb{R}_{\ge}^{\mathcal{S}}; 
\, \mathbb{R}_{>}^{\mathcal{S}_2})$ for $m_0 \in \mathbb{N}.$ 
Then 
$\Psi_{\mathrm{QSSA}} : \mathbb{P}_{m}(\mathbb{R}^{\mathcal{S}}; 
\, \mathbb{R}^{\mathcal{S}}) \to 
\mathbb{P}_{\bar{m}}(\mathbb{R}_{\ge}^{\mathcal{S} + \mathcal{S}_2}; 
\, \mathbb{R}^{\mathcal{S} + \mathcal{S}_2})$, mapping the 
RHS of differential equations in system~(\ref{gpolya})--(\ref{gpolyb}), 
denoted $\mathcal{P}(x; \, k)$, to the RHS of differential equations 
of system~(\ref{gkin21a})--(\ref{gkin22b}), denoted 
$(\Psi_{\mathrm{QSSA}} \mathcal{P})(\{x, y\}; \, \bar{k})$, 
with the constraint that $x_s > 0$ for $s \in \mathcal{S}_2$, is 
called a \emph{quasi-steady state transformation}. Here, 
$\bar{m} \le m + m_0 + 2$, and $\bar{k}$ is a vector of the 
new rate constants obtained from $k$
by rewriting the polynomial 
$(\Psi_{\mathrm{QSSA}} \mathcal{P})(\{x, y\}; \, \bar{k})$
into the mass action form.
\end{definition}

\begin{theorem}\label{TheoremADD}
Solutions of systems~$(\ref{gpolya})$--$(\ref{gpolyb})$
and~$(\ref{gkin21a})$--$(\ref{gkin22b})$, corresponding
to $\mathcal{P}(x;k)$ and 
$(\Psi_{\mathrm{QSSA}} \mathcal{P})(\{x, y\}; \, \bar{k})$,
are \emph{asymptotically equivalent} in the limit $\mu \rightarrow 0$, 
and $(\Psi_{\mathrm{QSSA}} \mathcal{P})(\{x, y\}; \, \bar{k})$ is a \emph{kinetic function}.
\end{theorem}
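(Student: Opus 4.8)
The plan is to separate the statement into its two assertions: that the total system $(\ref{gkin21a})$--$(\ref{gkin22b})$ is governed by a kinetic function, and that it is asymptotically equivalent to the polynomial system $(\ref{gpolya})$--$(\ref{gpolyb})$ as $\mu \to 0$. The first assertion will be handled by a direct structural inspection for cross-negative terms, the second by verifying the hypotheses of Tikhonov's theorem.

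\emph{Kinetic property.} I would check each differential equation of $(\ref{gkin21a})$--$(\ref{gkin22b})$ against Definition~\ref{def:cnt}. For $s \in \mathcal{S}_1$, equation $(\ref{gkin21a})$ is literally $(\ref{gpolya})$, which is kinetic for those indices by hypothesis. For $s \in \mathcal{S}_2$, the only term of $(\ref{gkin21b})$ that can be negative is $-\omega_s^{-1} x_s p_s(x) y_s \sum_{r \in \mathcal{R}_2} b_{sr} x^{\beta_{sr}}$; it carries the explicit factor $x_s$, hence is $x_s$-factorable (Definition~\ref{def:xf}) and therefore vanishes whenever $x_s = 0$, so it is not a cross-negative term, while the remaining terms $\sum_{r \in \mathcal{R}_1} a_{sr} x^{\alpha_{sr}}$ are nonnegative on $\mathbb{R}_{\ge}^{\mathcal{S}}$ since $a_{sr} \ge 0$. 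Rewriting $(\ref{gkin22a})$ as $\mathrm{d} y_s/\mathrm{d} t = \mu^{-1}\omega_s - \mu^{-1} x_s p_s(x) y_s$, the single negative term $-\mu^{-1} x_s p_s(x) y_s$ carries the factor $y_s$ and again vanishes on the corresponding coordinate hyperplane, while $\mu^{-1}\omega_s > 0$. Hence the total system has no cross-negative terms, so $(\Psi_{\mathrm{QSSA}} \mathcal{P})(\{x, y\}; \, \bar{k})$ is kinetic; moreover, discarding the monomials with vanishing $a_{sr}$ or $b_{sr}$, every surviving monomial has a strictly positive coefficient (using $\omega_s, \mu > 0$), so it is representable in the mass action form with all new rate constants positive. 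Equivalently, this is precisely the content of Korzukhin's theorem invoked constructively.

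\emph{Asymptotic equivalence.} I would cast $(\ref{gkin21a})$--$(\ref{gkin22b})$ as a singularly perturbed system with slow variables $x \in \mathbb{R}^{\mathcal{S}}$ and fast variables $y = (y_s)_{s \in \mathcal{S}_2}$, and verify Tikhonov's hypotheses on a compact subset of $\{x : x_s > 0 \text{ for } s \in \mathcal{S}_2\}$ over a finite time interval on which the solution of $(\ref{gpolya})$--$(\ref{gpolyb})$ exists (using the finiteness assumption of Section~\ref{sec:notation}). The degenerate equation $0 = \omega_s - x_s p_s(x) y_s$ has the unique isolated root $\bar{y}_s(x) = \omega_s/(x_s p_s(x))$, which is smooth there because $p(x) \in \mathbb{P}_{m_0}(\mathbb{R}_{\ge}^{\mathcal{S}}; \, \mathbb{R}_{>}^{\mathcal{S}_2})$ gives $x_s p_s(x) > 0$. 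The associated boundary-layer problem $\mathrm{d} y_s/\mathrm{d}\tau = \omega_s - x_s p_s(x) y_s$ with $x$ frozen is a scalar linear ODE with coefficient $-x_s p_s(x) < 0$, so $\bar{y}_s(x)$ is uniformly exponentially stable with basin of attraction all of $\mathbb{R}$; in particular the initial value $y_s^0$ lies in this basin. Tikhonov's theorem then gives, as $\mu \to 0$, convergence of $y$ to $\bar{y}(x)$ and of $x$ to the solution of the reduced system obtained by substituting $\bar{y}_s(x)$ into $(\ref{gkin21b})$. The substitution collapses the prefactor: $\omega_s^{-1} x_s p_s(x)\,\bar{y}_s(x) = \omega_s^{-1} x_s p_s(x)\cdot \omega_s/(x_s p_s(x)) = 1$, so $(\ref{gkin21b})$ reduces exactly to $(\ref{gpolya})$ for $s \in \mathcal{S}_2$, while $(\ref{gkin21a})$ is already $(\ref{gpolya})$ for $s \in \mathcal{S}_1$; thus the reduced system is $(\ref{gpolya})$--$(\ref{gpolyb})$, which is the claimed asymptotic equivalence.

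\emph{Main obstacle.} The genuine difficulty is the bookkeeping required to invoke Tikhonov's theorem legitimately: one must guarantee that the slow components $x_s$, $s \in \mathcal{S}_2$, remain strictly positive so that $\bar{y}_s(x)$ and the vector field stay smooth and bounded. This follows from the $x_s$-factorability of the only negative term of $(\ref{gkin21b})$, which makes the hyperplane $\{x_s = 0\}$ forward-invariant, so $x_s^0 > 0$ forces $x_s(t) > 0$, together with the constraint $x_s > 0$ imposed in Definition~\ref{def:QSSA}; one then restricts to a compact set in $x$-space and a finite time interval to obtain the uniformity Tikhonov needs. Once positivity is secured, the remaining content is exactly the structural check that no cross-negative term is created and the algebraic identity $\omega_s^{-1} x_s p_s(x)\,\bar{y}_s(x) = 1$ that makes the degenerate system reproduce the target polynomial system.
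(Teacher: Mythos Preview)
Your proposal is correct and follows essentially the same approach as the paper: verify that the adjointed system has an isolated, globally stable fixed point $y_s^* = \omega_s/(x_s p_s(x))$ so that Tikhonov's theorem applies, substitute to recover $(\ref{gpolya})$--$(\ref{gpolyb})$, and check the absence of cross-negative terms directly from Definition~\ref{def:cnt}. Your version is considerably more detailed---in particular the explicit linearity of the boundary-layer equation and the positivity bookkeeping for $x_s$---but the underlying argument is identical to the paper's.
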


\begin{proof}
Fixed points of system~(\ref{gkin22a}) 
satisfy $y_s^{*} = \omega_s (x_s p_s(x) )^{-1}$. The fixed points are isolated, and, since (from Definition~\ref{def:QSSA}) $x_s > 0$ and 
$p_s(x) > 0$, $\forall x \in \mathbb{R}_{\ge}^{\mathcal{S}}$, 
$\forall s \in \mathcal{S}_2$, it follows that the fixed points are globally stable. Thus, the conditions of Tikhonov's theorem~\cite{QSSA1} are satisfied by the total
system~(\ref{gkin21a})--(\ref{gkin22b}). Then, by applying the theorem, i.e. substituting $y_s^{*}$ into~(\ref{gkin21b}), one recovers the corresponding degenerate system given 
by~(\ref{gpolya})--(\ref{gpolyb}). Finally, the total system~(\ref{gkin21a})--(\ref{gkin22b}) is kinetic, as can be verified by using Definition~\ref{def:cnt}.
\end{proof}

\begin{corollary} \label{cor:qssa}
The quasi-steady state transformation $\Psi_{\mathrm{QSSA}}$, defined in \emph{Definition~\ref{def:QSSA}}, is a \emph{kinetic transformation} 
in the limit $\mu \rightarrow 0$.
\end{corollary}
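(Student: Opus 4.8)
The plan is to verify that $\Psi_{\mathrm{QSSA}}$ satisfies both conditions (i) and (ii) in Definition~\ref{def:kinetictransf}, drawing on Theorem~\ref{TheoremADD} for the bulk of the work. Condition (i) is essentially immediate: Theorem~\ref{TheoremADD} already asserts that $(\Psi_{\mathrm{QSSA}} \mathcal{P})(\{x,y\}; \, \bar{k})$ is a kinetic function, and Definition~\ref{def:QSSA} records the codomain $\mathbb{P}_{\bar m}(\mathbb{R}_{\ge}^{\mathcal{S}+\mathcal{S}_2}; \, \mathbb{R}^{\mathcal{S}+\mathcal{S}_2})$ with $\bar m \le m + m_0 + 2 \ge m$ and $\bar S = S + |\mathcal{S}_2| \ge S$, so the domain/codomain requirements of Definition~\ref{def:kinetictransf}(i) hold.

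The substance of the proof is condition (ii), the local topological equivalence at corresponding fixed points. First I would note that a fixed point $x^{*}$ of the degenerate system~(\ref{gpolya})--(\ref{gpolyb}) is mapped by $\Psi_{\mathrm{QSSA}}$ to the point $\{x^{*}, y^{*}\}$ of the total system~(\ref{gkin21a})--(\ref{gkin22b}), where $y_s^{*} = \omega_s (x_s^{*} p_s(x^{*}))^{-1}$ for $s \in \mathcal{S}_2$; this is a fixed point of the total system by construction, and it lies in the nonnegative orthant since $x_s^{*} > 0$ and $p_s(x^{*}) > 0$ there. Next I would examine the Jacobian of the total system at $\{x^{*}, y^{*}\}$ in the limit $\mu \to 0$. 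Writing it in block form with the $x$-block and the $y$-block, the adjointed equations~(\ref{gkin22a}) contribute, after dividing by $\mu$, a block $-\mu^{-1} x_s p_s(x) $ on the diagonal of the $y$-$y$ part; as $\mu \to 0$ these eigenvalues diverge to $-\infty$, so each additional eigenvalue $\bar\lambda_n$, $n = S+1, \ldots, \bar S$, is negative, as required. For the remaining $S$ eigenvalues, the singular-perturbation structure means the slow dynamics is governed by the reduced (degenerate) system, so on the slow manifold the Jacobian of the reduced flow coincides with $J(x^{*}; \, k)$ — here one invokes Tikhonov's theorem (or the standard Fenichel reduction) to conclude that the slow eigenvalues of the total system converge to the eigenvalues $\lambda_n$ of $J(x^{*}; \, k)$ as $\mu \to 0$, hence have the same signs for $\mu$ small enough, giving $\mathrm{sign}(\bar\lambda_n) = \mathrm{sign}(\lambda_n)$.

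The main obstacle I anticipate is making the eigenvalue-matching argument in condition (ii) genuinely rigorous at the level of the linearization: Tikhonov's theorem as usually stated is about convergence of \emph{trajectories} on compact time intervals, not directly about spectra of Jacobians. The clean way around this is to appeal to geometric singular perturbation theory (Fenichel), under which the reduction to the critical manifold $\{y_s = y_s^{*}(x)\}$ is a smooth invariant-manifold reduction whose linearization at $x^{*}$ is exactly the Jacobian of the degenerate system, and whose normal (fast) directions have eigenvalues $O(\mu^{-1})$ with strictly negative real part because $-x_s p_s(x^{*}) < 0$. Since the spectrum depends continuously on $\mu$ and no slow eigenvalue crosses the imaginary axis for $\mu$ sufficiently small (assuming, as is implicit, that $x^{*}$ is hyperbolic for the degenerate system), the sign conditions follow. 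I would then close by combining: condition (i) holds by Theorem~\ref{TheoremADD}, condition (ii) holds by the singular-perturbation analysis above in the limit $\mu \to 0$, so $\Psi_{\mathrm{QSSA}}$ is a kinetic transformation in that limit, which is the assertion of Corollary~\ref{cor:qssa}.
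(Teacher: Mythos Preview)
Your proposal is correct and follows the intended route: the paper offers no separate proof of the corollary, treating it as immediate from Theorem~\ref{TheoremADD} (the kinetic output gives condition~(i) of Definition~\ref{def:kinetictransf}, and the asymptotic equivalence of solutions as $\mu \to 0$ is taken to give condition~(ii)). Your explicit verification of condition~(ii) via the block Jacobian and a Fenichel-type slow-manifold reduction supplies detail that the paper leaves implicit, but the approach is the same in spirit.
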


\noindent
An alternative transformation, for which condition (i) in 
Definition~\ref{def:kinetictransf} is also always satisfied, and that 
also expands the dimension of an ODE system,
is an incomplete Carleman embedding~\cite{UNI2,CARL1}. However, 
condition (ii) in Definition~\ref{def:kinetictransf} is satisfied 
for the incomplete Carleman embedding only provided initial conditions 
for the adjointed system are appropriately constrained, and, furthermore, 
the transformation generally results in an adjointed system with a 
higher nonlinearity degree when compared to $\Psi_{\mathrm{QSSA}}$. 
In fact, Theorem~\ref{TheoremADD} can be seen as an asymptotic 
alternative to the incomplete Carleman embedding, i.e. instead 
of requiring adjointed variables to satisfy 
$y_{s}(t) = \omega_s \, x_s^{-1}(t) \, p_s^{-1}(x(t))$, 
one requires them to satify
$\lim_{\mu \to 0} y_{s}(t) = \omega_s \, x_s^{-1}(t) \, p_s^{-1}(x(t))$. 
The theorem can also be seen as an extension of using the 
QSSA to represent reactions of more than two molecules as a limiting case 
of bimolecular reactions~\cite{UNI3} to the case of using the QSSA 
to represent cross-negative terms as a limiting case of kinetic ones.

\section{Construction of reaction systems undergoing a 
supercritical homoclinic bifurcation}
\label{sec:construction}

In this section, a brief review of a general bifurcation theory, 
and a more specific homoclinic bifurcation, is presented. This is 
followed by applying the framework developed in Section~\ref{sec:inverse} 
to construct specific reaction systems displaying the homoclinic 
bifurcation. 

Variations of parameters in a parameter dependent ODE system 
may change topology of the phase space, e.g. a change may occur 
in the number of invariant sets or their stability, shape of 
their region of attraction or their relative position. At 
values of the bifurcation parameters at which the system becomes 
topologically nonequivalent it is said that a bifurcation occurs, 
and the bifurcation is characterized by two sets of conditions: 
bifurcation conditions defining the type of bifurcation, and 
genericity conditions ensuring that the system is generic, 
i.e. can be simplified near the bifurcation to a normal
form~\cite{Bifur1}. If it is sufficient to analyse 
a small neighbourhood of an invariant set to detect a 
bifurcation, the bifurcation is said to be local. Otherwise, 
it is called global, and the analysis becomes more challenging. 
Bifurcations are common in kinetic equations, where, in the 
case of the mass action kinetics, the rate constants play 
the role of bifurcation
parameters~\cite{Intro1,Intro11,Intro2,Intro3,Intro4}. 
In this paper, focus is placed on a global bifurcation 
giving rise to stable oscillations, called a supercritical 
homoclinic bifurcation~\cite{Bifur1,Sand,Intro11}.

\begin{definition}
Suppose $\mathbf{x}^{*}$ is a fixed point of system~(\ref{eqn:nkv}). 
An orbit $\gamma^{*}$ starting at a point $\mathbf{x}$ is 
called \emph{homoclinic} to the fixed point $\mathbf{x}^{*}$ if 
its $\alpha$-limiting and $\omega$-limiting sets are both 
equal to $\mathbf{x}^{*}$. 
\end{definition}

\noindent
Put more simply, a homoclinic orbit connects a fixed point 
to itself. An example of a homoclinic orbit to a saddle 
fixed point can be seen in Figure~\ref{fig:alpha}(b) on 
page~\pageref{fig:alpha}, where 
the homoclinic orbit is shown as the purple loop, while the 
saddle as the blue dot at the origin.

If a homoclinic orbit to a hyperbolic fixed point is present 
in an ODE system, then the system is \emph{structurally unstable}, 
i.e. small perturbations to the equations can destroy the 
homoclinic orbit and change the structure of the phase space, 
so that a bifurcation can occur. For two-dimensional systems, the bifurcation 
and genericity conditions are completely specified by the 
Andronov-Leontovich theorem~\cite{Bifur1} given
in~\ref{HomoclinicTheorem}. In summary, the theorem 
demands that the sum of the eigenvalues corresponding to the 
saddle at the bifurcation point, called the saddle quantity, 
must be nonzero (nondegeneracy condition), and that the 
so-called Melnikov integral at the bifurcation point evaluated 
along the homoclinic orbit must be nonzero (transversality condition).

\subsection{The inverse problem formulation} 
\label{sec:formulation}
Construction of reaction systems with prescribed properties 
is an inverse problem which we will solve by applying kinetic 
transformations described in Section~\ref{sec:inverse}. 
Our goal is to find a reaction system with the mass action kinetics 
(see Definition~\ref{def:mak}) such that the kinetic equations 
satisfy assumptions of Andronov-Leontovich theorem in~\ref{HomoclinicTheorem}, 
i.e. they must contain a homoclinic orbit defined on a two-dimensional 
manifold in the nonnegative orthant, and undergo the homoclinic 
bifurcation in a generic way. The output of this inverse problem
will be a canonical reaction network which corresponds to the 
constructed ODE system. Thus the inverse problem is solved in 
three steps given in Algorithm~\ref{tabinv}. The first step is solved
by using results of Sandstede~\cite{Sand} which leads to 
a set of polynomial functions satisfying the first three conditions 
of Andronov-Leontovich theorem in~\ref{HomoclinicTheorem}.
An additional transformation is then performed ensuring that the 
final condition of Andronov-Leontovich theorem is satisfied. 
In this paper, nonlinear kinetic transformations are applied 
on the resulting polynomial function (using Step (2), case (b), 
in Algorithm~\ref{tabinv}).

\begin{table}[t]
 \renewcommand\tablename{Algorithm}
\hrule
\vskip 2.5 mm
\begin{enumerate}
\item \textbf{Construction of a polynomial function} $\mathcal{P}(x; \, k)$: \\
Find an ODE system (\ref{eqn:nk}) which satisfies the assumptions 
of Andronov-Leontovich theorem in~\ref{HomoclinicTheorem}. 
\item \textbf{Construction of a kinetic function} 
$\mathcal{K}(\bar{x}; \, \bar{k})$: \\
Find a transformation so that the following conditions are satisfied:
\begin{enumerate}[(i)]
\item The transformation is kinetic (see Definition~\ref{def:kinetictransf}), 
mapping the polynomial function $\mathcal{P}(x; \, k)$ into a kinetic 
function 
$\mathcal{K}(\bar{x}; \, \bar{k}) \equiv (\Psi \mathcal{P})(\bar{x}; \, \bar{k})$. 
\item The set of constraints (see Definition~\ref{def:constraints}) 
ensuring that the homoclinic orbit is in $\mathbb{R}_{\ge}^2$ 
are satisfied for $\mathcal{K}(\bar{x}; \, \bar{k})$. 
\end{enumerate}
To determine the choice of 
$\Psi$, if possible, use Theorem~\ref{ThmAffine} to deduce that 
$\mathcal{P}(x; \, k)$ is affinely nonkinetic 
(see Definition~\ref{def:essential}), given the constraints. 
\begin{enumerate}
\item If $\mathcal{P}(x; \, k)$ is not affinely nonkinetic, attempt 
to find an affine transformation
$\Psi = \Psi_{A}$ such that (i)--(ii) are satisfied.
\item If $\mathcal{P}(x; \, k)$ is affinely nonkinetic, or 
if application of Theorem~\ref{ThmAffine} is computationally 
too complicated, then choose kinetic transformation
$\Psi$ satisfying (i)--(ii) as an appropriate composition of $\Psi_{A}$, 
$\Psi_{\mathcal{X}}$ and $\Psi_{\mathrm{QSSA}}$, where 
$\Psi_{\mathcal{X}}$ is an $x$-factorable transformation 
(see Section~\ref{sec:xft}) and $\Psi_{\mathrm{QSSA}}$ is 
a quasi-steady state transformation 
(see Section~\ref{sec:QSSA}, in particular Corollary~\ref{cor:qssa}).
\end{enumerate}
\item \textbf{Construction of a reaction network}: \\
Use Definition~\ref{def:cn} to construct the canonical reaction network $\mathcal{R}_{\mathcal{K}^{-1}}$.
\end{enumerate}
\hrule
\caption{{\it Three steps of the solution to the inverse problem of
finding reaction systems undergoing a supercritical homoclinic 
bifurcation.}}
\label{tabinv}
\end{table}

\subsection{Step \emph{(1)}: construction of polynomial function
$\mathcal{P}(x; \, k)$}  
\label{sec:step1}

\begin{definition} \label{alphadefinition}
A version of a plane algebraic curve called Tschirnhausen cubic 
(also known as Catalan's trisectrix, and l'Hospital's 
cubic)~\cite{Bifur2} given by:
\begin{align}
H(x_1,x_2) & =  -x_1^2 + x_2^2 (1 + x_2) \; \; = \; \; 0, \label{loop}
\end{align}
is referred to as the \emph{alpha curve}. The part of the 
curve with $x_2 < 0$ is called the \emph{alpha loop}, while 
the part with $x_2 > 0$ is called the \emph{alpha branches}, 
with the branch for $x_1 < 0$ being the \emph{negative alpha branch}, 
and for $x_1 > 0$ being the \emph{positive alpha branch}. 
Solutions $x_1$ of equation~(\ref{loop}) are denoted 
$x_1^{\pm} = \pm x_2 \sqrt{1 + x_2}$.
\end{definition}

\begin{figure}[t]
\centerline{
\hskip -2mm
\includegraphics[width=0.46\columnwidth]{./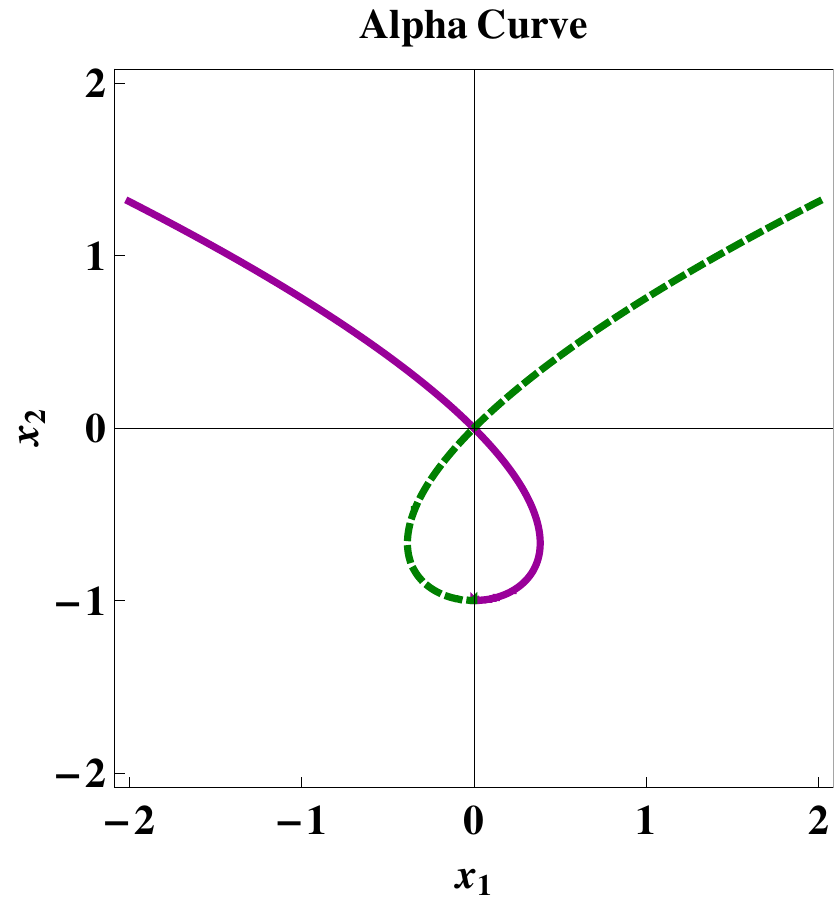}
\hskip 6mm
\includegraphics[width=0.46\columnwidth]{./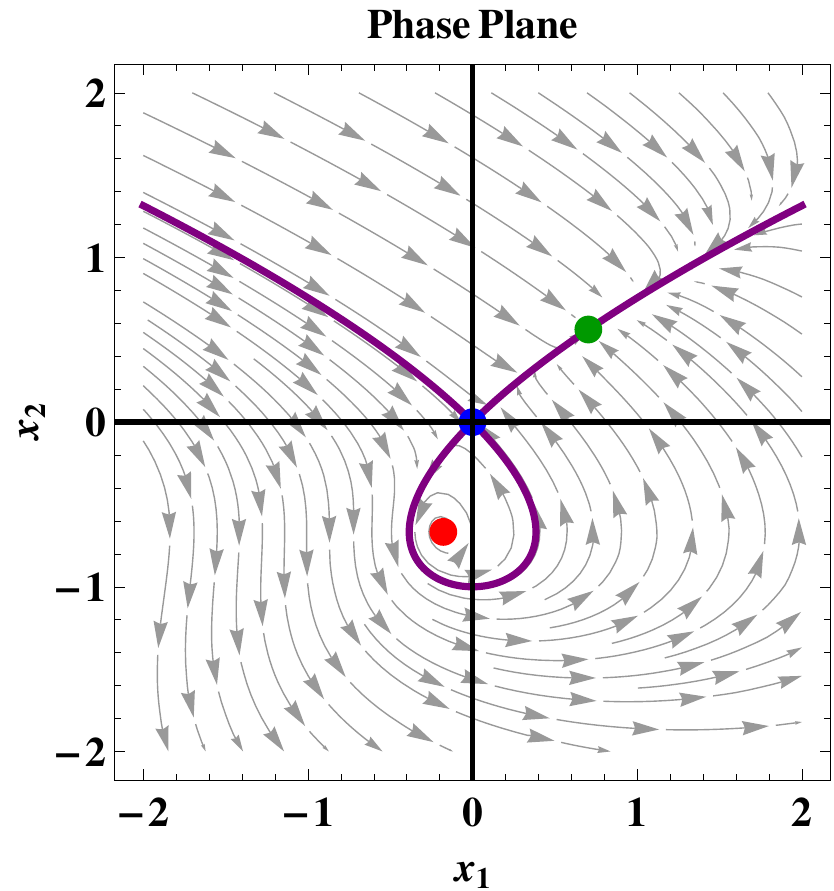}
}
\vskip -6.5cm
\leftline{\hskip 1mm (a) \hskip 5.95cm (b)}
\vskip 6.0cm
\caption{{\rm (a)} 
{\it The alpha curve~$(\ref{loop})$, with branch $x_1^{-}$ plotted as 
the solid purple curve, and branch $x_1^{+}$ as the dashed green curve.} 
{\rm (b)} 
{\it Phase plane diagram of system~$(\ref{ansatza})$--$(\ref{ansatzb})$ 
for $a = -0.8$, with the 
stable node, the saddle, and the unstable spiral represented as 
the green, blue and red dots, respectively. The alpha curve is 
shown in purple, while the vector field as gray arrows.}}
\label{fig:alpha}
\end{figure}

\noindent
The $\alpha$ curve is shown in Figure~\ref{fig:alpha}(a), 
with $x_1^{-}$ plotted as the solid purple curve, and $x_1^{+}$ 
as the dashed green curve. It can be seen that the curve 
consists of the tear-shaped alpha loop located in region 
$\lbrack -2 \sqrt{3}/9,2 \sqrt{3}/9 \rbrack \times \lbrack -1,0 \rbrack$, 
and the positive and negative alpha branch in the first and 
second quadrant, respectively. As is done in~\cite{Sand}, 
the alpha curve is mapped into a system of polynomial ODEs.

\begin{lemma}\label{Theorem1}
The two-dimensional polynomial \emph{ODE} system
\begin{align}
\frac{\mathrm{d} x_1}{\mathrm{d}  t} \; \; 
= \mathcal{P}_1(\mathbf{x}; \, a) & 
= a x_1 +  x_2 + \frac{3}{2} a x_1 x_2 + \frac{3}{2} x_2^2 , 
\label{ansatza} \\
\frac{\mathrm{d}  x_2}{\mathrm{d}  t}  \; \; 
= \mathcal{P}_2(\mathbf{x}; \, a) & =  x_1 + a x_2 + a x_2^2,
\label{ansatzb}
\end{align}
contains the alpha curve~$(\ref{loop})$ as phase plane orbits, 
with the alpha loop as a homoclinic orbit to the fixed 
point $\mathbf{x}^{*} = \mathbf{0}$, provided $a^2 < 1$. 
If $a \in (-1, 0)$, the alpha loop is stable from the inside, 
and system~$(\ref{ansatza})$--$(\ref{ansatzb})$ has three fixed 
points: a saddle at the origin, an unstable spiral inside 
the alpha loop, and a stable node on the positive alpha branch.
\end{lemma}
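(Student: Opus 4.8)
The plan is to verify the three claims in Lemma~\ref{Theorem1} in the natural order: first that the alpha curve is invariant under the flow of~(\ref{ansatza})--(\ref{ansatzb}), then that the loop portion is a genuine homoclinic orbit to the origin, and finally to classify the three fixed points.

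\textbf{Step 1: Invariance of the alpha curve.} The key computation is to show that $H(x_1,x_2) = -x_1^2 + x_2^2(1+x_2)$ is (up to a scalar factor depending on $x$) a conserved-type quantity along trajectories, i.e. that $\frac{\mathrm{d}}{\mathrm{d} t} H(x_1(t),x_2(t)) = \mu(x_1,x_2)\, H(x_1,x_2)$ for some polynomial cofactor $\mu$. Computing, $\dot H = -2x_1 \dot x_1 + (2x_2 + 3x_2^2)\dot x_2$; substituting the right-hand sides of~(\ref{ansatza})--(\ref{ansatzb}) and collecting terms, I expect the result to factor as $\mu(x_1,x_2) H(x_1,x_2)$ with $\mu$ linear or quadratic in $x$ (the coefficients $\tfrac32 a$, $\tfrac32$, $a$ in the equations are presumably chosen precisely to make this work). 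Once $H$ is shown to be a Darboux-type invariant with polynomial cofactor, the zero level set $\{H=0\}$ is invariant under the flow, so the alpha curve consists of phase-plane orbits. This requires $a^2<1$ only insofar as that is needed for later claims; the invariance itself should hold for all $a$.

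\textbf{Step 2: The alpha loop is homoclinic to the origin.} Since $H(0,0)=0$, the origin lies on the curve, and by Step~1 the loop component $\{H=0, -1\le x_2 < 0\}\setminus\{0\}$ is a union of orbits; because it is a single smooth arc from the origin back to the origin (the tear-shaped loop described after Definition~\ref{alphadefinition}, meeting the origin tangentially) containing no fixed point in its interior arc, it must be a single orbit whose $\alpha$- and $\omega$-limit sets are both $\{0\}$ — hence homoclinic. To make this rigorous I would check that the only fixed point on the loop is the origin (solve $\mathcal{P}_1=\mathcal{P}_2=0$ together with $H=0$) and invoke the Poincaré--Bendixson dichotomy on the compact loop, or simply note the loop is a compact connected one-dimensional invariant set containing exactly one fixed point and conclude directly.

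\textbf{Step 3: Fixed points and their types.} Solving $\mathcal{P}_1(\mathbf{x};a)=\mathcal{P}_2(\mathbf{x};a)=0$: from~(\ref{ansatzb}), $x_1 = -ax_2 - ax_2^2$; substituting into~(\ref{ansatza}) gives a polynomial in $x_2$ whose roots I expect to be $x_2=0$, a root inside $(-1,0)$, and a positive root lying on the positive alpha branch (here $a^2<1$, and more precisely $a\in(-1,0)$, controls the number and location of real roots and the signs below). For each fixed point I compute the Jacobian
\[
J(\mathbf{x}) = \begin{pmatrix} a + \tfrac32 a x_2 & 1 + \tfrac32 a x_1 + 3 x_2 \\[1mm] 1 & a + 2 a x_2 \end{pmatrix},
\]
and use trace/determinant: at the origin $J=\left(\begin{smallmatrix} a & 1 \\ 1 & a\end{smallmatrix}\right)$ has eigenvalues $a\pm1$, which straddle zero when $a^2<1$, giving a saddle; at the interior point I expect $\det J>0$ and $\operatorname{tr}J>0$ for $a\in(-1,0)$, giving an unstable node or spiral (and the discriminant sign, which should be negative, pins it as a spiral); at the branch point I expect $\det J>0$, $\operatorname{tr}J<0$, and nonnegative discriminant, giving a stable node.

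\textbf{Main obstacle.} The genuinely delicate part is Step~1 — guessing/verifying the Darboux cofactor so that $\dot H$ factors cleanly through $H$ — together with the sign bookkeeping at the non-trivial fixed points in Step~3, where one must show the discriminant of $J$ has the claimed sign (spiral versus node, stable node not a focus) for all $a\in(-1,0)$; the "stable from the inside" assertion for the loop then follows from the interior fixed point being a repeller (unstable spiral) combined with Poincaré--Bendixson, since every trajectory starting strictly inside the loop must accumulate on the loop.
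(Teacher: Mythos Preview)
Your proposal is correct and follows the same overall structure as the paper: verify invariance of $\{H=0\}$, locate the three fixed points explicitly, and classify them via the Jacobian (the paper gives the fixed points as $(0,0)$, $(2a/9,-2/3)$, and $(a^{-1}(1-a^{-2}),\,-1+a^{-2})$, and uses exactly the Jacobian you wrote down). Two differences are worth flagging. First, your Darboux-cofactor framing is in fact the accurate one: the paper writes ``$\boldsymbol{\mathcal{P}}\cdot\nabla H=0$'', but a direct computation gives $\dot H = a(2+3x_2)\,H$, so $H$ is a Darboux polynomial with linear cofactor rather than a first integral --- exactly what you anticipated. Second, for the claim that the loop is \emph{stable from the inside}, the paper does not go through the interior repeller and Poincar\'e--Bendixson as you propose; instead it invokes the saddle quantity $\sigma_0=\lambda_1+\lambda_2=2a<0$, which is the standard local stability criterion for a homoclinic loop (cf.\ the Andronov--Leontovich theorem in the appendix). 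Your route would give a stronger global statement but needs the extra step of excluding periodic orbits inside the loop; the paper's saddle-quantity argument is shorter, is precisely the local statement needed, and dovetails with the bifurcation analysis that follows.
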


\begin{proof}
Setting 
$\mathbf{\boldsymbol{\mathcal{P}}}(\mathbf{x}; \, \mathbf{k}) 
= (\mathcal{P}_1(\mathbf{x}; \, \mathbf{k}),
   \mathcal{P}_2(\mathbf{x}; \, \mathbf{k}))$ in 
system~(\ref{eqn:nkv}) to be a polynomial function of 
$\mathbf{x} =(x_1,x_2)$ with undetermined coefficients $\mathbf{k}$, 
and requiring $\boldsymbol{\mathcal{P}} \cdot \nabla H = 0$, 
one obtains system~(\ref{ansatza})--(\ref{ansatzb}), as was 
done in~\cite{Sand}. As there is only one free parameter, 
denoted $a$, we write:
$\mathbf{\boldsymbol{\mathcal{P}}}(\mathbf{x}; \, \mathbf{k}) 
= \mathbf{\boldsymbol{\mathcal{P}}}(\mathbf{x}; \, a)$.
System~(\ref{ansatza})--(\ref{ansatzb}) has three fixed points: 
$\big(0, 0 \big)$, 
$\big(2a/9, -2/3\big)$, 
and 
$\big(a^{-1} (1 - a^{-2}), \, -1 + a^{-2} \big)$.
The condition $a^2 < 1$ ensures that fixed points
$\big(2a/9, -2/3\big)$ and 
$\big(a^{-1} (1 - a^{-2}), \, -1 + a^{-2} \big)$
are not on the alpha loop. The Jacobian 
$J = \nabla \mathbf{\mathcal{P}}(\mathbf{x}; \, a)$ 
is given by
\begin{align}
J &=  \left(\begin{array}{cc}
a + \frac{3}{2} a x_2 & \; 1 + \frac{3}{2} a x_1 + 3  x_2
\\
1 & a + 2 a x_2
\end{array}
\right). \label{Jacobian1}
\end{align}
Let the determinant and trace of $J$ be denoted 
by $\mathrm{det}(J)$ and $\mathrm{tr}(J)$, respectively.
Fixed point $\big(0, 0 \big)$ is a saddle, since 
$\mathrm{det}(J) = a^2 -1 < 0$. The saddle quantity from 
Andronov-Leontovich theorem in~\ref{HomoclinicTheorem} is 
given by $\sigma_0 = \lambda_1 + \lambda_2 = (a - 1) + (a + 1) = 2 a$, 
were $\lambda_1$ and $\lambda_2$ are the saddle eigenvalues. 
The alpha loop is stable from the inside provided $\sigma_0 < 0$, 
implying $a < 0$. It then follows that $\big(2/9 a, -2/3\big)$ 
is an unstable spiral, and 
$\big(a^{-1} (1 - a^{-2}), \, -1 + a^{-2} \big)$ a stable node.
\end{proof}

\noindent
A representative phase plane diagram of 
system~(\ref{ansatza})--(\ref{ansatzb}) is shown in
Figure~\ref{fig:alpha}(b). Note that a part of the positive alpha 
branch $x_1^{+}$ is a heteroclinic orbit connecting the saddle 
and the node. The distance between the saddle and the node is 
given by $d(a) = a^{-3} (1-a^2) \sqrt{1 + a^2}$, so 
that $\lim_{a \to 0} d(a) = + \infty$ 
and $\lim_{a \to -1} d(a) = 0$, i.e. increasing $a$ 
increases length of the heteroclinic orbit by sliding 
the node along $x_1^{+}$. 

System~(\ref{ansatza})--(\ref{ansatzb}) satisfies the first 
three conditions of Andronov-Leontovich theorem 
in~\ref{HomoclinicTheorem}. In order to satisfy the final 
condition, a set of perturbations must be found that 
destroy the alpha loop in a generic way, and this is ensured 
by the Melnikov condition~(\ref{Melnikov}). The bifurcation 
parameter controlling the existence of the alpha loop is 
denoted as $\alpha \in \mathbb{R}$. Note that 
$\mathcal{P}(\mathbf{x}; \, a)$ perturbed by a function of the 
form $\alpha \nabla H(x_1,x_2) = \alpha (-2 x_1, 2 x_2 + 3 x_2^2)$ 
satisfies the Melnikov condition~\cite{Sand}, 
but $\mathcal{P}(\mathbf{x}; \, a) + \alpha \nabla H(x_1,x_2)$ has 
three terms dependent on $\alpha$. In the following theorem, 
a simpler set of perturbations is found, introducing only 
one $\alpha$ dependent term in system~(\ref{ansatza})--(\ref{ansatzb}).
 
\begin{theorem} 
\label{Melni}
If a perturbation of the form $(\alpha f(x_1),0)^T$, where 
$\alpha \in \mathbb{R}$, is added to the RHS of 
system~$(\ref{ansatza})$--$(\ref{ansatzb})$, and if $f(x_1)$ 
is an odd function, $f(-x_1) = - f(x_1)$, and $f(x_1) \ne 0$, 
$\forall x_1 \in \lbrack -2 \sqrt{3}/9,0)$, then the 
perturbed system undergoes a supercritical homoclinic 
bifurcation in a \emph{generic way} as $\alpha$ is varied 
in the neighbourhood of zero. 
\end{theorem}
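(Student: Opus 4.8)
\noindent
The plan is to check the four hypotheses of the Andronov--Leontovich theorem in~\ref{HomoclinicTheorem} for the perturbed system
\begin{equation}
\frac{\mathrm{d} x_1}{\mathrm{d} t} = \mathcal{P}_1(\mathbf{x}; \, a) + \alpha f(x_1), \qquad
\frac{\mathrm{d} x_2}{\mathrm{d} t} = \mathcal{P}_2(\mathbf{x}; \, a), \label{eqn:pertproof}
\end{equation}
with $a \in (-1,0)$ fixed and $\alpha$ near $0$ the bifurcation parameter. Since $f$ is odd, $f(0) = 0$, so the origin remains a fixed point of~(\ref{eqn:pertproof}) for every $\alpha$, and its Jacobian is a continuous perturbation of the matrix~(\ref{Jacobian1}) evaluated at $\mathbf{x} = \mathbf{0}$, whose determinant $a^2 - 1$ is negative; hence for $|\alpha|$ small the origin stays a hyperbolic saddle with saddle quantity near $\sigma_0 = 2a \ne 0$. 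At $\alpha = 0$, Lemma~\ref{Theorem1} supplies the homoclinic orbit $\Gamma_0$ (the alpha loop) and $\sigma_0 \ne 0$. So the only remaining condition is the transversality (Melnikov) condition at $\alpha = 0$: the Melnikov integral of~(\ref{eqn:pertproof}) along $\Gamma_0$ must be nonzero.

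\noindent
Let $\boldsymbol{\varphi}(t) = (x_1(t), x_2(t))$, $t \in \mathbb{R}$, be the homoclinic solution tracing $\Gamma_0$ at $\alpha = 0$, and let $J$ denote the Jacobian~(\ref{Jacobian1}) of $\boldsymbol{\mathcal{P}}(\cdot; a)$. The wedge product of $\boldsymbol{\mathcal{P}}(\cdot; a)$ with the perturbation $(f(x_1), 0)^{\top}$ is $-\mathcal{P}_2 f(x_1)$, so the relevant Melnikov integral is, up to a positive normalising constant,
\begin{equation}
M = -\int_{-\infty}^{\infty} w(t)\, \mathcal{P}_2\bigl(\boldsymbol{\varphi}(t); a\bigr)\, f\bigl(x_1(t)\bigr)\, \mathrm{d} t, \qquad
w(t) = \exp\!\left(-\int_0^t \mathrm{tr}\, J\bigl(\boldsymbol{\varphi}(s); a\bigr)\, \mathrm{d} s\right) > 0 . \label{eqn:Mproof}
\end{equation}
Because $\boldsymbol{\varphi}(t) \to \mathbf{0}$ exponentially fast along the eigendirections of the saddle as $t \to \pm\infty$, the integrand decays exponentially and $M$ is finite.

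\noindent
The sign of $M$ is read off from the geometry of $\Gamma_0$. As $t$ increases from $-\infty$, the orbit leaves the origin along the unstable eigendirection $(-1,-1)$ and traverses the branch $x_1 = x_2\sqrt{1 + x_2} < 0$ of the alpha loop with $x_2$ decreasing from $0$ to $-1$; it reaches the bottom point $(0,-1)$ at some time $t_0$, then traverses the branch $x_1 = -x_2\sqrt{1 + x_2} > 0$ with $x_2$ increasing back from $-1$ to $0$, returning to the origin along the stable eigendirection $(1,-1)$ as $t \to +\infty$. The monotonicity of $x_2$ on each branch follows by substituting $x_1 = \pm x_2\sqrt{1 + x_2}$ into~(\ref{ansatzb}): this gives $\mathcal{P}_2 = x_2\sqrt{1 + x_2}\,(\pm 1 + a\sqrt{1 + x_2})$, which for $x_2 \in (-1,0)$ and $a \in (-1,0)$ is negative on the branch with sign $+$ and positive on the branch with sign $-$. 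Hence $t \mapsto x_2(t)$ is a valid change of variable on each of $(-\infty, t_0)$ and $(t_0, \infty)$; performing it in~(\ref{eqn:Mproof}) and using $f(-u) = -f(u)$ to fold the second piece onto the first yields
\begin{equation}
M = \int_{-1}^{0} \bigl(w_{\mathrm{L}}(x_2) + w_{\mathrm{R}}(x_2)\bigr)\, f\bigl(x_2\sqrt{1 + x_2}\bigr)\, \mathrm{d} x_2 , \label{eqn:Mfinalproof}
\end{equation}
where $w_{\mathrm{L}}, w_{\mathrm{R}} > 0$ are the weight $w$ re-expressed through $x_2$ along the two branches. As $x_2$ ranges over $(-1,0)$, the argument $x_2\sqrt{1 + x_2}$ ranges over $[-2\sqrt{3}/9, 0)$, on which $f$ has, by hypothesis and continuity, a single fixed nonzero sign; therefore the integrand in~(\ref{eqn:Mfinalproof}) keeps a constant sign and $M \ne 0$. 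This is exactly where the assumptions enter: oddness of $f$ makes the two branches add rather than cancel, and $f(x_1) \ne 0$ on $[-2\sqrt{3}/9, 0)$ covers the whole $x_1$-range of the alpha loop.

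\noindent
With transversality in hand, all four hypotheses of the Andronov--Leontovich theorem in~\ref{HomoclinicTheorem} hold for~(\ref{eqn:pertproof}) at $\alpha = 0$, so a homoclinic bifurcation occurs there in a generic way; since $\sigma_0 = 2a < 0$, the bifurcating limit cycle is stable, i.e. the bifurcation is supercritical, the cycle existing for $\alpha$ on the side of $0$ determined by $\mathrm{sign}(M)$. I expect the main obstacle to be pinning down the orientation of $\Gamma_0$ and the strict monotonicity of $x_2$ along its two branches, needed to justify the substitution leading to~(\ref{eqn:Mfinalproof}), together with matching the exact normalisation and sign conventions of the Melnikov integral of~\ref{HomoclinicTheorem}; the convergence of~(\ref{eqn:Mproof}) and the persistence of the saddle for $|\alpha|$ small are routine.
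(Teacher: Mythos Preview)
Your proof is correct and follows essentially the same route as the paper: compute the Melnikov integral along the alpha loop, change variables from $t$ to $x_2$ on each of the two branches using $\mathcal{P}_2\,\mathrm{d}t = \mathrm{d}x_2$, and then use the oddness of $f$ to fold the two contributions into $\int_{-1}^{0}\bigl(w_{\mathrm{L}}+w_{\mathrm{R}}\bigr)f\bigl(x_2\sqrt{1+x_2}\bigr)\,\mathrm{d}x_2$, which is nonzero because $f$ keeps a fixed sign on $[-2\sqrt{3}/9,0)$. You supply more detail than the paper does on the orientation of the loop and on the strict monotonicity of $x_2$ along each branch (your substitution $x_1=\pm x_2\sqrt{1+x_2}$ into~(\ref{ansatzb}) giving $\mathcal{P}_2 = x_2\sqrt{1+x_2}\,(\pm 1 + a\sqrt{1+x_2})$), which the paper takes for granted; this extra care is exactly what is needed to justify the change of variables and is a genuine improvement in exposition, not a different argument.
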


\begin{proof}
Consider the perturbed version of 
system~(\ref{ansatza})--(\ref{ansatzb}):
\begin{align}
\frac{\mathrm{d} x_1}{\mathrm{d}  t} 
& = \mathcal{P}_1(\mathbf{x}; \, a, \alpha) 
= a x_1 +  x_2 + \frac{3}{2} a x_1 x_2  + \frac{3}{2} x_2^2 + \alpha f(x_1),
\label{ansatzz2a} 
\\
\frac{\mathrm{d}  x_2}{\mathrm{d}  t}  
& = \mathcal{P}_2(\mathbf{x}; \, a) 
=  x_1 + a x_2 + a x_2^2.
\label{ansatzz2b}
\end{align}
Melnikov integral~(\ref{Melnikov}) 
for system~(\ref{ansatzz2a})--(\ref{ansatzz2b}) 
is given by
$$
M(0) 
=  - \int_{- \infty}^{+ \infty} \varphi(t) f(x_1)  
\mathcal{P}_2(x_1, x_2; \, a) \, \mathrm{d} t. 
$$
Using~(\ref{ansatzz2b}), we have  
$\mathcal{P}_2(x_1, x_2; \, a) \mathrm{d} t = \mathrm{d} x_2$.
Thus we can express $M(0)$ in terms of $x_2$ 
as follow:
\begin{align*}
M(0) 
& = 
- 
\int_{- \infty}^{0} \varphi(t) f(x_1) 
\mathcal{P}_2(x_1, x_2; \, a) \, \mathrm{d} t 
- 
\int_{0}^{+ \infty} 
\varphi(t) f(x_1) \mathcal{P}_2(x_1, x_2; \, a) 
\, \mathrm{d} t \\
& = 
\int_{-1}^{0} 
\varphi \big(t^{+}(x_2) \big) 
f \big(x_2\sqrt{1 + x_2} \big) 
\, \mathrm{d} x_2
-
\int_{-1}^{0}    
\varphi \big(t^{-}(x_2) \big) f \big(\!-\!x_2\sqrt{1 + x_2})  
\, \mathrm{d} x_2,
\end{align*}
where $t^{+}(x_2)$ (resp. $t^{-}(x_2)$) is the dependence of 
time $t$ on $x_2$ along the positive (resp. negative) alpha 
branch for the trajectory which is at point $(x_1,x_2) = (0,-1)$ 
at time $t=0$ (for $\alpha=0$). 
Since $f$ is odd and $\varphi(t^{\pm}) > 0$, we deduce
$$
M(0) =  
\int_{-1}^{0} 
\Big[ 
\varphi \big(t^{+}(x_2) \big) 
+ 
\varphi \big(t^{-}(x_2) \big)
\Big] 
f \left(x_2 \sqrt{1 + x_2} \right)
\, \mathrm{d} x_2 \; \; \ne \; \; 0. 
$$
\end{proof}

\noindent
For further simplicity of~(\ref{ansatzz2a})--(\ref{ansatzz2b}),
function $f(x_1)$ is set to $f(x_1) = x_1$ in the rest of this
paper.

\subsection{Step \emph{(2)}: construction of kinetic function
$\mathcal{K}(\bar{x}; \, \bar{k})$} 
\label{sec:step2}
The RHS of system~(\ref{ansatzz2a})--(\ref{ansatzz2b}), $\mathbf{\boldsymbol{\mathcal{P}}}(\mathbf{x}; \, a, \alpha)$, 
is a kinetic function. However, the alpha loop, which is the 
region of interest, is not located in the nonnegative orthant. 
In order to position the loop into the positive orthant, we will 
apply affine transformations. First, we show that 
system~(\ref{ansatzz2a})--(\ref{ansatzz2b}) with the homoclinic 
orbit in nonnegative orthant is nonkinetic under all translation 
transformations for $a \in (-1,0)$, $\alpha \in \mathbb{R}$.

\begin{lemma}\label{auxlemma}
Function $\mathbf{\boldsymbol{\mathcal{P}}}(\mathbf{x}; \, a, \alpha)$, 
given by the \emph{RHS} of $(\ref{ansatzz2a})$--$(\ref{ansatzz2b})$, 
is nonkinetic under all translation transformations 
$\Psi_{\mathcal{T}}$, for $a \in (-1,0)$ and $\alpha \in \mathbb{R}$, 
given the condition that the homoclinic orbit is mapped 
into $\mathbb{R}^2_{>}$.
\end{lemma}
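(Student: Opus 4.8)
The plan is to show that after an arbitrary translation $\bar{\mathbf{x}} = \mathbf{x} + \boldsymbol{\mathcal{T}}$, the transformed right-hand side $(\Psi_{\mathcal{T}}\boldsymbol{\mathcal{P}})(\bar{\mathbf{x}};\,a,\alpha) = \boldsymbol{\mathcal{P}}(\bar{\mathbf{x}}-\boldsymbol{\mathcal{T}};\,a,\alpha)$ necessarily contains a cross-negative term, provided the translation is chosen so that the alpha loop lands in $\mathbb{R}^2_{>}$. First I would write out the components of $\boldsymbol{\mathcal{P}}(\bar{\mathbf{x}}-\boldsymbol{\mathcal{T}};\,a,\alpha)$ explicitly with $\boldsymbol{\mathcal{T}} = (\tau_1,\tau_2)$, $f(x_1)=x_1$, and collect terms by monomial. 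The key structural observation is that the $x_2$-equation is $x_1 + a x_2 + a x_2^2$, and after translation its constant term is $-\tau_1 + a(-\tau_2) + a\tau_2^2$ while its linear-in-$\bar{x}_1$ coefficient stays $1$ and its linear-in-$\bar{x}_2$ coefficient becomes $a - 2a\tau_2$; the relevant potentially cross-negative contributions to $\mathrm{d}\bar{x}_2/\mathrm{d}t$ when $\bar{x}_2 = 0$ are the constant term and the coefficient of $\bar{x}_1$.

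Next I would exploit the geometry forced by the positivity constraint. For the alpha loop, which occupies $x_2 \in [-1,0]$, $x_1 \in [-2\sqrt{3}/9, 2\sqrt{3}/9]$, to be mapped into $\mathbb{R}^2_{>}$ we need $\tau_2 > 1$ (so that $\bar{x}_2 = x_2 + \tau_2 > 0$ for all $x_2 \ge -1$) and $\tau_1 > 2\sqrt{3}/9$. I would then evaluate the $\bar{x}_2$-equation on the boundary face $\bar{x}_2 = 0$: there $\mathrm{d}\bar{x}_2/\mathrm{d}t = (\bar{x}_1 - \tau_1) + a(-\tau_2) + a\tau_2^2 = \bar{x}_1 - \tau_1 + a\tau_2(\tau_2 - 1)$. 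The term $-\tau_1$ (a degree-zero monomial in $\bar{x}$) is strictly negative since $\tau_1 > 0$, hence it is a cross-negative term in the sense of Definition~\ref{def:cnt} (take $s = s_2$, pick the reaction index corresponding to that monomial, and any $\bar{x}$ with $\bar{x}_2 = 0$). Actually, since $a\in(-1,0)$ and $\tau_2>1$, the term $a\tau_2(\tau_2-1)$ is also negative, reinforcing the conclusion but not needed. The constant term $-\tau_1$ alone suffices: it is negative, it sits in the $s_2$-component, and it is active on $\mathrm{supp}^{\mathsf c}(\bar{x}) \ni s_2$, so $\boldsymbol{\mathcal{P}}(\bar{\mathbf{x}}-\boldsymbol{\mathcal{T}};\,a,\alpha)$ is nonkinetic.

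The one point requiring care is the quantifier structure: Lemma~\ref{auxlemma} asserts nonkinetic \emph{under all} translations subject to the positivity condition, so I must verify that \emph{every} admissible $\boldsymbol{\mathcal{T}}$ produces such a term, not just a generic one. This is where I would argue that the positivity condition on the loop forces $\tau_1 > 2\sqrt{3}/9 > 0$ (and $\tau_2 > 1$), which is exactly what makes the constant term $-\tau_1$ in the $\bar x_2$-equation strictly negative; there is no freedom to escape. I expect the main obstacle to be purely bookkeeping — correctly translating the polynomial and identifying which monomial plays the role of the cross-negative term in the formal decomposition $f_s(x) = \sum_r f_{sr}(x)$ — rather than anything conceptually subtle; the geometric constraint does all the real work. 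A secondary subtlety is making precise that "the homoclinic orbit is mapped into $\mathbb{R}^2_{>}$" indeed pins down the sign of $\tau_1$; I would state explicitly that the leftmost point of the alpha loop has $x_1 = -2\sqrt{3}/9$, so its image has first coordinate $\tau_1 - 2\sqrt{3}/9$, which must be positive.
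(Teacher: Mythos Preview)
Your proposal is correct and follows essentially the same route as the paper: both identify the constant coefficient $\bar{k}_0^2 = -\tau_1 + a\tau_2(\tau_2-1)$ in the translated $\bar{x}_2$-equation as a cross-negative term, using $\tau_1 > 0$ and $\tau_2 > 1$ forced by placing the alpha loop in $\mathbb{R}^2_{>}$ (the paper reads these off from the single loop point $(0,-1)$, you from the full extent of the loop). The only minor wording issue is that in the mass-action decomposition the degree-zero monomial in the $\bar{x}_2$-component is the full constant $\bar{k}_0^2$, not $-\tau_1$ alone; but since you observe both summands are negative, this is harmless.
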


\begin{proof}
Let us apply the translation transformation $\Psi_{\mathcal{T}}$ 
(see Definition~\ref{def:translation}), 
$\boldsymbol{\mathcal{T}} = (\mathcal{T}_1,\mathcal{T}_2) \in \mathbb{R}^2$, 
on function $\boldsymbol{\mathcal{P}}(\mathbf{x}; \, a, \alpha)$, 
given by the \emph{RHS} of $(\ref{ansatzz2a})$--$(\ref{ansatzz2b})$, 
resulting in:
\begin{align}
(\Psi_{\mathcal{T}} \mathcal{P}_1)(\mathbf{\bar{x}}; \, \mathbf{\bar{k}})& 
= \bar{k}_0^1 + \bar{k}_1^1 \bar{x}_1 + \bar{k}_2^1 \bar{x}_2 
+ \bar{k}_{1 2}^1 \bar{x}_1 \bar{x}_2 + \bar{k}_{2 2}^1 (\bar{x}_2)^2 , \nonumber \\
(\Psi_{\mathcal{T}} \mathcal{P}_2)(\mathbf{\bar{x}}; \, \mathbf{\bar{k}})& 
=  \bar{k}_{0}^{2} + \bar{k}_{1}^2 \bar{x}_1 
+ \bar{k}_{2}^2  \bar{x}_2 + \bar{k}_{2 2}^2 (\bar{x}_2)^2, \label{zerorot}
\end{align}
with $\mathbf{\bar{x}} = \mathbf{x} + \boldsymbol{T}$, 
and coefficients 
$\mathbf{\bar{k}} = \mathbf{\bar{k}}(a, \alpha, \boldsymbol{\mathcal{T}})$ 
given by
\begin{align}
\bar{k}_0^1  & = 
\frac{1}{2} 
\big(3 
(\mathcal{T}_2 - \frac{2}{3})(a \mathcal{T}_1 + \mathcal{T}_2) -2 \alpha \mathcal{T}_1 
\big), \nonumber \\
\bar{k}_0^2 & = 
-\mathcal{T}_1 + a \mathcal{T}_2 (\mathcal{T}_2 - 1), \nonumber  \\
\bar{k}_1^1  & = 
-\frac{3}{2} a (\mathcal{T}_2 - \frac{2}{3}) + \alpha,  \; \; \; \; \; \;  
\bar{k}_1^2 = 1,  \nonumber \\
\bar{k}_2^1  &=  
1 - \frac{3}{2} (a \mathcal{T}_1 + 2 \mathcal{T}_2), \; \; \; \; \; \;  
\bar{k}_2^2 = - 2 a \big(\mathcal{T}_2 - \frac{1}{2} \big),  \nonumber \\
\bar{k}_{1 2}^1 & = 
\frac{3}{2} a, \; \; \; \; \; \; \; \bar{k}_{2 2}^1   
= \frac{3}{2}, \; \; \; \; \; \; \; \bar{k}_{2 2}^2 = a. \label{h5} 
\end{align}
Consider the point $\mathbf{x_0} = (0,-1)$, which is on 
the alpha loop. It is mapped by $\Psi_{\mathcal{T}}$ to 
the point $\mathbf{\bar{x}_0} = (\mathcal{T}_1,-1 + \mathcal{T}_2)$. 
Requiring that the alpha loop is mapped to $\mathbb{R}^2_{>}$ 
implies that we must have $\mathbf{\bar{x}_0} \in \mathbb{R}_{>}^2$, 
so that the following set of constraints 
(see Definition~\ref{def:constraints}) must be satisfied:
\begin{align}
\Phi = \{ \mathcal{T}_1 > 0, \mathcal{T}_1 > 1 \}. \label{tinequality}
\end{align}
Using the fact that $a \in (-1,0)$ and the 
constraints~(\ref{tinequality}), it follows that $\bar{k}_0^2$ 
from~(\ref{h5}) is negative, $\bar{k}_0^2< 0$. Thus, 
$(\Psi_{\mathcal{T}} \boldsymbol{\mathcal{P}})(\mathbf{\bar{x}}; \, \mathbf{\bar{k}})$ 
has a cross-negative term, and the statement of the theorem follows.
\end{proof}
\noindent
One can also readily prove that 
$\mathbf{\boldsymbol{\mathcal{P}}}(\mathbf{x}; \, a, \alpha)$ 
is nonkinetic under all affine transformations for $|a| \ll 1$, 
and $|\alpha| \ll 1$. 
Thus, in the next two sections, we follow Step (2), case (b), 
in Algorithm~\ref{tabinv}, applying transformations 
$\Psi_{\mathcal{X}}$ and $\Psi_{\mathrm{QSSA}}$ on the 
kinetic function 
$(\Psi_{\mathcal{T}} \boldsymbol{\mathcal{P}})(\mathbf{\bar{x}}; 
\, \mathbf{\bar{k}})$ 
given by~(\ref{zerorot}). We require the following 
conditions to be satisfied:
\begin{align}
a & \in (-1,0), \; \; \; |\alpha| \ll 1,  \nonumber \\
\Phi & = 
\left\{ \mathcal{T}_1 > \frac{2 \sqrt{3}}{9}, \mathcal{T}_2 > 1 \right\},
\label{constraintz}
\end{align}
with the set of constraints $\Phi$ ensuring that 
the homoclinic orbit is in $\mathbb{R}^2_{>}$. The 
reason for requiring $|\alpha| \ll 1$ is that then 
the following results are more readily derived, and 
the condition is sufficient for studying system~(\ref{zerorot}) 
near the bifurcation point $\alpha = 0$. 
A representative phase plane diagram corresponding 
to the ODE system with RHS~(\ref{zerorot}) is shown in 
Figure~\ref{fig:times}(a), 
with fixed points, the alpha curve, and the vector field 
denoted as in Figure~\ref{fig:alpha}(b), and with 
the red segments on the axes corresponding to the 
phase plane regions where the cross-negative effect 
exists (see Definition~\ref{def:cne}).

\begin{figure}[t]
\centerline{
\hskip -2mm
\includegraphics[width=0.46\columnwidth]{./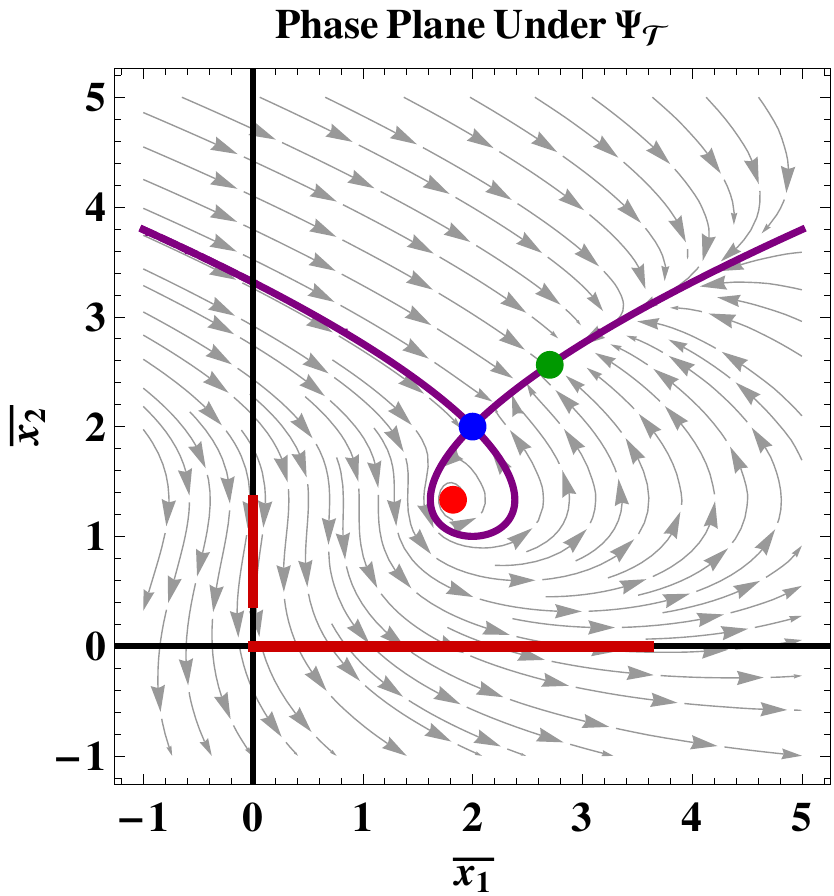}
\hskip 6mm
\includegraphics[width=0.46\columnwidth]{./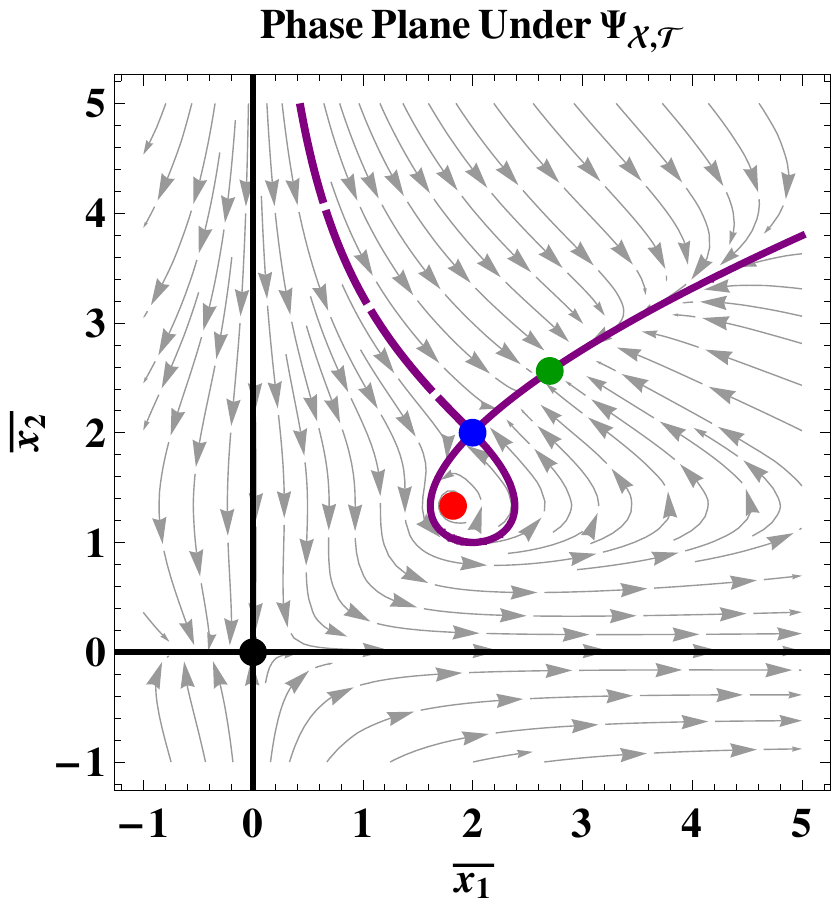}
}
\vskip -6.5cm
\leftline{\hskip 1mm (a) \hskip 5.95cm (b)}
\vskip 6.0cm
\centerline{
\hskip -2mm
\includegraphics[width=0.46\columnwidth]{./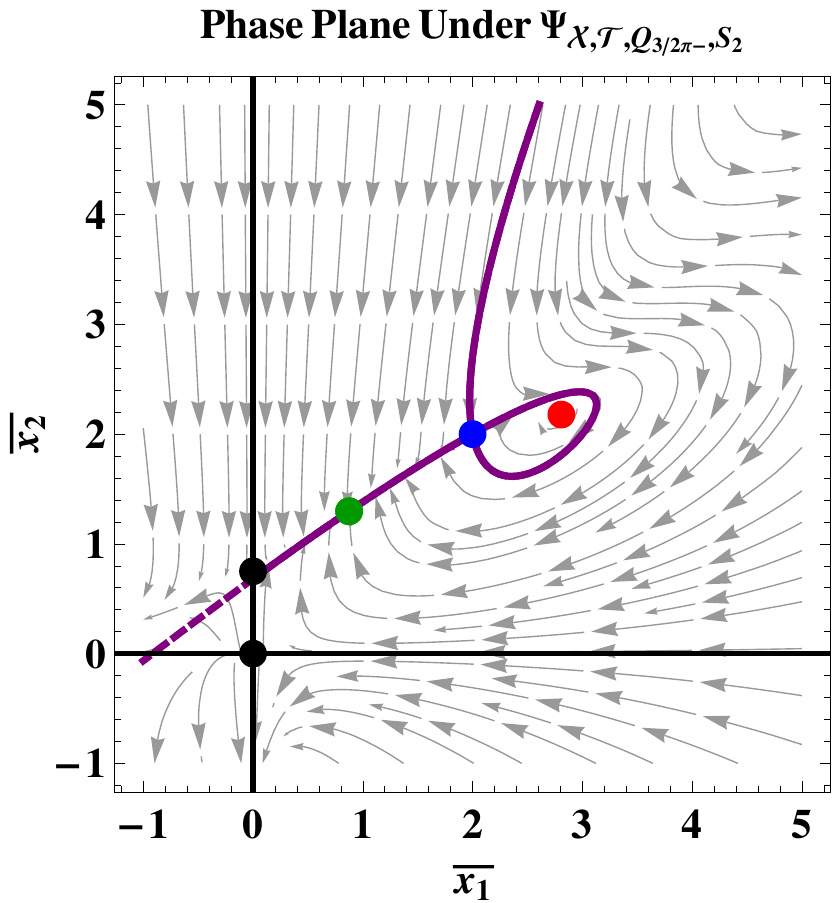}
\hskip 6mm
\includegraphics[width=0.46\columnwidth]{./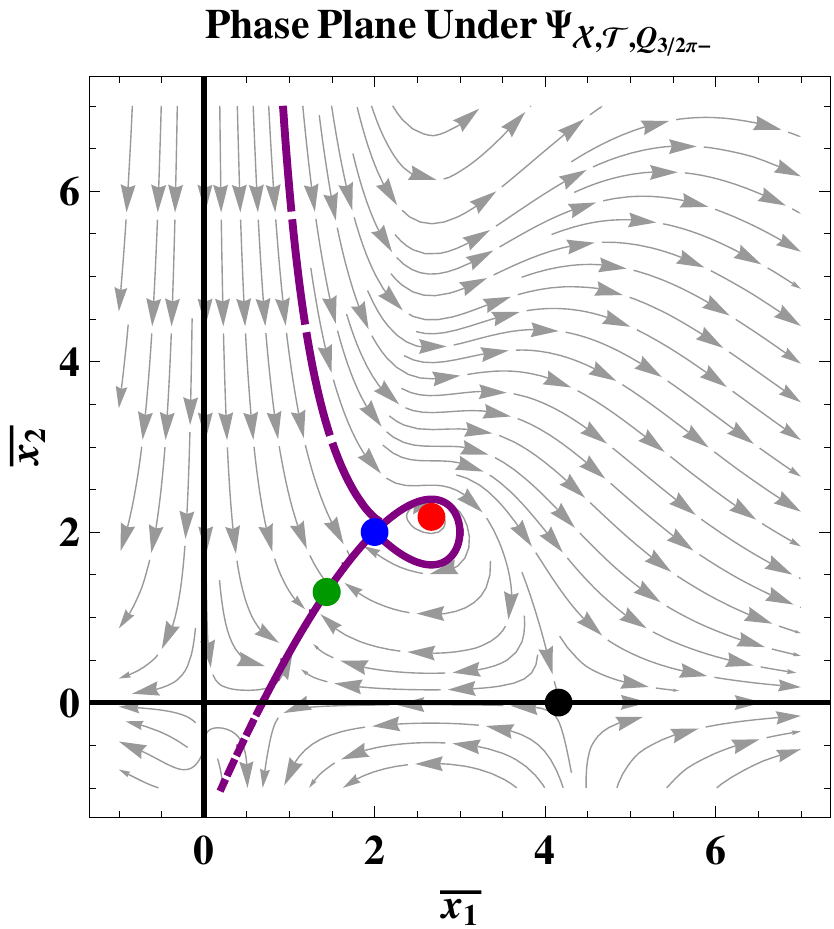}
}
\vskip -6.5cm
\leftline{\hskip 1mm (c) \hskip 5.95cm (d)}
\vskip 6.0cm
\caption{ 
{\it Phase plane diagrams of} {\rm (a)} {\it ODE system with RHS~$(\ref{zerorot})$;}
{\rm (b)} {\it ODE system with RHS~$(\ref{zerorot2})$;}
and {\rm (c)} {\it ODE system with RHS~$(\ref{zerorot3})$.} 
{\it The stable node, the saddle, and the unstable spiral are 
represented as the green, blue and red dot, respectively. 
The alpha curve is shown in purple, and the vector field 
as gray arrows. On each plot it is indicated which kinetic 
transformation is applied to system~$(\ref{ansatzz2a})$--$(\ref{ansatzz2b}).$
Red segments of the phase plane axes in} {\rm (a)} 
{\it denote the regions where the
cross-negative effect exists. In} {\rm (b)} {\it and} 
{\rm (c)}, {\it boundary fixed points 
are represented as the black dots, purple curves with a coarser 
dashing represent the saddle manifolds that asymptotically approach 
an axis, while with a finer dashing those that are outside 
of $\mathbb{R}_{\ge}^2$.} \hfill\break
{\rm (d)} {\it Phase plane diagram of a system for which $x$-factorable
transformation significantly globally influences the phase curves 
in such a way that a pure translation cannot resolve the artefacts. 
For more details, see the text. The parameters are fixed to 
$a = -0.8$, $\alpha = 0$, $\mathcal{T}_1 = 2$, $\mathcal{T}_2 = 2$.}}
\label{fig:times}
\end{figure}

\subsubsection{X-factorable transformation} 
\label{sec:xftc}

Let us apply the $x$-factorable transformation 
$\Psi_{\mathcal{X}}$ on system~(\ref{zerorot}). 
Letting 
$\Psi_{\mathcal{X}, \mathcal{T} } \equiv \Psi_{\mathcal{X}} \circ \Psi_{\mathcal{T}}$, 
the resulting kinetic function 
$\boldsymbol{\mathcal{K}}_{\mathcal{X}, \mathcal{T}}(\mathbf{\bar{x}}; 
\, \mathbf{\bar{k}}) \equiv 
(\Psi_{\mathcal{X}, \mathcal{T}} \boldsymbol{\mathcal{P}}) (\mathbf{\bar{x}}; \, 
\mathbf{\bar{k}})$ is given by
\begin{align}
\mathcal{K}_{1, \mathcal{X}, \mathcal{T}}(\mathbf{\bar{x}}; \, \mathbf{\bar{k}})& = 
\bar{x}_1 ( \bar{k}_0^1 + \bar{k}_1^1 \bar{x}_1 + \bar{k}_2^1 \bar{x}_2 + 
\bar{k}_{1 2}^1 \bar{x}_1 \bar{x}_2 + \bar{k}_{2 2}^1 (\bar{x}_2)^2) , \nonumber \\
\mathcal{K}_{2, \mathcal{X}, \mathcal{T}}(\mathbf{\bar{x}}; \, \mathbf{\bar{k}}) & = 
\bar{x}_2 (\bar{k}_{0}^{2} + \bar{k}_{1}^2 \bar{x}_1 + 
\bar{k}_{2}^2  \bar{x}_2 + \bar{k}_{2 2}^2 (\bar{x}_2)^2).\label{zerorot2}
\end{align}

\begin{theorem}\label{XFactor}
ODE systems with RHSs~$(\ref{zerorot})$ and~$(\ref{zerorot2})$ are topologically 
equivalent in the neighbourhood of the fixed points in $\mathbb{R}_{>}^2$, with 
the homoclinic orbit in $\mathbb{R}^2_{>}$, and a saddle at the origin 
being the only boundary fixed point in $\mathbb{R}_{\ge}^2$, if:
\begin{align}
a & \in (-1,0), \; \; \; |\alpha| \ll 1,  \nonumber \\
\Phi & 
= \left\{ \mathcal{T}_1  > \frac{2 \sqrt{3}}{9}, 
\mathcal{T}_2 \in \left(\mathrm{max}(1,-a \mathcal{T}_1), 
\frac{2}{3} + \frac{8}{3} a^{-2} (3-a^2) (a + 4 \mathcal{T}_1) \right) 
\right\}.\label{Xonecond}
\end{align}
\end{theorem}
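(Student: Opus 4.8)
The plan is to handle the three interior fixed points of the translated system~(\ref{zerorot}) one at a time via Theorem~\ref{Xfactdet}, and then to enumerate every fixed point that $\Psi_{\mathcal{X}}$ can create on the coordinate axes. Since $\Psi_{\mathcal{T}}$ is a translation, the fixed points of~(\ref{zerorot}) are the images under $\bar{x}=x+\mathcal{T}$ of the three fixed points of the perturbed ansatz~(\ref{ansatzz2a})--(\ref{ansatzz2b})---for $|\alpha|\ll1$ these are small perturbations of the saddle at $\mathbf{0}$, the unstable spiral near $(2a/9,-2/3)$ and the stable node near $(a^{-1}(1-a^{-2}),-1+a^{-2})$ of Lemma~\ref{Theorem1}---and the Jacobian of~(\ref{zerorot}) at any such point equals the Jacobian of~(\ref{ansatzz2a})--(\ref{ansatzz2b}) (that is,~(\ref{Jacobian1}) with $\alpha$ added to the top-left entry) at the corresponding pre-translation point. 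The constraints $\mathcal{T}_1>2\sqrt{3}/9$, $\mathcal{T}_2>1$ move the alpha loop---hence the homoclinic orbit at $\alpha=0$---into $\mathbb{R}^2_{>}$, since in the original coordinates it lies in $[-2\sqrt{3}/9,2\sqrt{3}/9]\times[-1,0]$. All three fixed points are hyperbolic, so it suffices to show $\Psi_{\mathcal{X}}$ preserves their stability type in order to get local topological equivalence.

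For the interior fixed points I would argue as follows. The saddle at $(\mathcal{T}_1,\mathcal{T}_2)$ is preserved unconditionally by Theorem~\ref{Xfactdet}(i). At the stable node, evaluating $\partial\mathcal{P}_1/\partial x_1$ and $\partial\mathcal{P}_2/\partial x_2$ from~(\ref{Jacobian1}) gives, up to an $O(\alpha)$ correction, $(3-a^2)/(2a)$ and $(2-a^2)/a$, both strictly negative for $a\in(-1,0)$; their product is positive, so Theorem~\ref{Xfactdet}(ii) keeps it a sink, and one checks likewise that $(\partial\mathcal{P}_1/\partial x_2)(\partial\mathcal{P}_2/\partial x_1)>0$ there, so Theorem~\ref{Xfactdet}(iii) keeps it a node. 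The unstable spiral is the delicate case: the product $(\partial\mathcal{P}_1/\partial x_2)(\partial\mathcal{P}_2/\partial x_1)$ is \emph{negative} at $(2a/9,-2/3)$, so Theorem~\ref{Xfactdet}(iii) does not apply and one computes directly. Form $J_{\mathcal{X}}(\bar{x}^*)=\mathrm{diag}(\bar{x}_1^*,\bar{x}_2^*)\,J(x^*)$ with $\bar{x}_1^*=\mathcal{T}_1+2a/9$, $\bar{x}_2^*=\mathcal{T}_2-2/3$; using that $\partial\mathcal{P}_1/\partial x_1$ vanishes at the unperturbed spiral, its trace and determinant stay positive so the spiral stays a source, while requiring the discriminant $(\mathrm{tr}\,J_{\mathcal{X}})^2-4\,\mathrm{det}\,J_{\mathcal{X}}$ to remain negative---so that the spiral stays a spiral---is precisely what yields the upper bound on $\mathcal{T}_2$ recorded in~(\ref{Xonecond}); carrying along the $O(\alpha)$ correction keeps this a strict inequality for $|\alpha|$ small.

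For the boundary fixed points, note that~(\ref{zerorot2}) is $x$-factorable, so by Theorem~\ref{Xfactdet}(iv) the origin is a fixed point with eigenvalues $\bar{k}_0^1$ and $\bar{k}_0^2$. From~(\ref{h5}) with $a\in(-1,0)$, $\mathcal{T}_1>0$, $\mathcal{T}_2>1$ one gets $\bar{k}_0^2<0$ just as in Lemma~\ref{auxlemma}, whereas $\bar{k}_0^1=\tfrac12\big(3(\mathcal{T}_2-\tfrac23)(a\mathcal{T}_1+\mathcal{T}_2)-2\alpha\mathcal{T}_1\big)>0$ precisely because $\mathcal{T}_2>\max(1,-a\mathcal{T}_1)$ and $|\alpha|\ll1$; hence the origin is a saddle. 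The only other candidates lie on the axes: on $\bar{x}_2=0$ one solves the linear equation $\bar{k}_0^1+\bar{k}_1^1\bar{x}_1=0$, whose root is negative since $\bar{k}_0^1>0$ and $\bar{k}_1^1=-\tfrac32a(\mathcal{T}_2-\tfrac23)+\alpha>0$; on $\bar{x}_1=0$ one solves the quadratic $\bar{k}_0^2+\bar{k}_2^2\bar{x}_2+\bar{k}_{22}^2\bar{x}_2^2=0$, whose discriminant, via the identity $(\mathcal{T}_2-\tfrac12)^2-\mathcal{T}_2(\mathcal{T}_2-1)=\tfrac14$, equals $a(a+4\mathcal{T}_1)<0$ because $\mathcal{T}_1>2\sqrt{3}/9>-a/4$, so it has no real roots. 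Therefore the origin is the unique boundary fixed point in $\mathbb{R}^2_{\ge}$.

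The main obstacle is the unstable spiral: Theorem~\ref{Xfactdet} supplies only \emph{sufficient} conditions for invariance, and condition~(iii) genuinely fails there, so one is pushed into explicit trace/determinant/discriminant bookkeeping for the rescaled Jacobian, and it is exactly this step that produces the nontrivial $a$-dependent upper bound on $\mathcal{T}_2$. The subsidiary difficulty is uniformity in $\alpha$: every sign condition above must be shown robust to the $O(\alpha)$ displacement of the fixed points, so the conclusion holds throughout a neighbourhood of the bifurcation value $\alpha=0$ and not merely at $\alpha=0$.
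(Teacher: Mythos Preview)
Your proposal is correct and follows essentially the same approach as the paper's proof: both invoke Theorem~\ref{Xfactdet}(i) for the saddle, verify conditions (ii)--(iii) at the node, observe that (iii) fails at the spiral and then compute the discriminant of $J_{\mathcal{X}}$ there directly to obtain the upper bound on $\mathcal{T}_2$, enumerate the boundary fixed points (origin, one on each axis) and eliminate the non-origin ones via $\mathcal{T}_2>-a\mathcal{T}_1$ and negativity of the quadratic discriminant, and finally pass to small $|\alpha|$ by continuity. Your treatment is in fact slightly more explicit than the paper's in a couple of places---you write out the discriminant $a(a+4\mathcal{T}_1)$ of the quadratic on $\bar{x}_1=0$ and argue the sign of the $\bar{x}_2=0$ root from $\bar{k}_0^1,\bar{k}_1^1>0$ rather than naming the root---but these are cosmetic differences within the same argument.
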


\begin{proof}
Let us assume $\alpha = 0$. 

\noindent
From statement (i) of Theorem~\ref{Xfactdet} it follows that 
the saddle fixed point of~(\ref{zerorot}) is preserved under 
$\Psi_{\mathcal{X}}$. Denoting the node and spiral fixed points 
of~(\ref{zerorot}) by 
$\mathbf{\bar{x}}^*_{\mathrm{nd}}$ and $\mathbf{\bar{x}}^*_{\mathrm{sp}}$, 
respectively, one finds that the Jacobian is given by:
\begin{align}
J_{\mathcal{T}}|_{\mathbf{\bar{x}} 
= \mathbf{\bar{x}}^*_{\mathrm{nd}}} &= \left(\begin{array}{cc}
a + \frac{3}{2} a^{-1}(1-a^2) & \; \; \frac{1}{2} a^{-2}(3-a^2) 
\\
1  & \; \;  a^{-1}(2-a^2) 
\end{array}
\right)   \implies   \mathrm{sign}(J_{\mathcal{T}}|_{\mathbf{\bar{x}} = \mathbf{\bar{x}}^*_{\mathrm{nd}}}) = \left(\begin{array}{cc}
- & \; \; +
\\
+  & \; \;  -
\end{array}
\right), \nonumber \\
J_{\mathcal{T}}|_{\mathbf{\bar{x}} 
= \mathbf{\bar{x}}^*_{\mathrm{sp}}} &= \left(\begin{array}{cc}
0 & \; \; - \frac{1}{3} (3-a^2) 
\\
1  & \; \;  -\frac{1}{3} a
\end{array}
\right) \;\;\;\;\;\;\;\;\;\;\;\;\;\;\;\;\;\;\;\;\;\;\;\;\;\;\;\; \implies  \mathrm{sign}(J_{\mathcal{T}}|_{\mathbf{\bar{x}} = \mathbf{\bar{x}}^*_{\mathrm{sp}}}) = \left(\begin{array}{cc}
0 & \; \; -
\\
+  & \; \;  +
\end{array}
\right).
\end{align}
Conditions (ii) and (iii) of Theorem~\ref{Xfactdet} are both 
satisfied for the node, but only condition (ii) is satisfied 
for the spiral. The condition for the spiral to remain 
invariant is obtained by demanding 
$\mathrm{disc}(J_{\mathcal{X}, \mathcal{T}}|_{\mathbf{\bar{x}} 
= \mathbf{\bar{x}}^*_{\mathrm{sp}}}) < 0$, where 
$J_{\mathcal{X}, \mathcal{T}}$ is the Jacobian of~(\ref{zerorot2}), 
and, taking into consideration~(\ref{constraintz}), this leads to
\begin{align}
\mathcal{T}_2 & < 
\frac{2}{3} + \frac{8}{3} a^{-2} (3-a^2) (a + 4 \mathcal{T}_1).
\end{align}
Boundary fixed points are given by 
$(0,0)$, $(\mathcal{T}_1 + a^{-1} \mathcal{T}_2,0)$, 
$(0, 1/2(1 \pm \sqrt{1 + 4 a^{-1} \mathcal{T}_1}) + \mathcal{T}_2)$. 
The second fixed point can be removed from the nonnegative quadrant 
by demanding $\mathcal{T}_2 > - a \mathcal{T}_1$, while the pair 
of the last ones always has nonzero imaginary part due 
to~(\ref{constraintz}). Statement (iv) of Theorem~\ref{Xfactdet} 
implies that the eigenvalues at the origin are given by 
$\lambda_1 = k_0^1 =3/2(\mathcal{T}_2 - 2/3) (a \mathcal{T}_1 
+ \mathcal{T}_2) > 0$ and 
$\lambda_2 = k_0^2 = -\mathcal{T}_1 + a \mathcal{T}_2 (\mathcal{T}_2 - 1) < 0$, 
so origin is a saddle fixed point. 

As $\alpha$ can be chosen arbitrarily close to zero, 
and as  
$\boldsymbol{\mathcal{K}}_{\mathcal{X}, \mathcal{T}}(\mathbf{\bar{x}}; 
\, \mathbf{\bar{k}})$ is a continuous function of $\alpha$, 
the theorem holds for sufficiently small $|\alpha| \ne 0$, as well.
\end{proof}
\noindent
A representative phase plane diagram corresponding to the ODE system
with RHS~(\ref{zerorot2}) is shown in Figure~\ref{fig:times}(b), where 
the saddle fixed point at the origin is shown as the black dot. It can 
be seen that one of the stable manifolds of the nonboundary saddle, 
represented as a dashed purple curve, approaches $x_2$-axis asymptotically, 
instead of crossing it as in Figure~\ref{fig:times}(a). 

The homoclinic orbit of the ODE system with RHS~(\ref{zerorot2}) is 
positioned below the node in the phase plane. Suppose the relative 
position of the stable sets is reversed by, say, applying an 
improper orthogonal matrix with the angle fixed to $3\pi/2$, 
$\Psi_{Q_{3\pi/2 -}}$, with a representative phase plane shown 
in Figure~\ref{fig:times}(d). In this case, one can straightforwardly 
show that the boundary fixed point given by 
$\big( \mathcal{T}_1 + 1/2 (1 + \sqrt{1 - 4 a^{-1} \mathcal{T}_2}), 0 \big)$, 
shown as the black dot in Figure~\ref{fig:times}(d), 
cannot be removed from $\mathbb{R}_{\ge}^2$, and is always 
a saddle. The same conclusions are true for other 
similar configurations of the stable sets obtained 
by rotations only. This demonstrates that $x$-factorable 
transformation can produce boundary artefacts that have 
a significant \emph{global} influence on the phase curves, 
that cannot be eliminated by simply translating a region of 
interest sufficiently far away from the axes. In order to 
eliminate the particular boundary artefact, a shear 
transformation may be applied. Consider applying 
$\Psi_{\mathcal{X}, \mathcal{T}, \mathcal{Q}_{3\pi/2 -}, \mathcal{S}_2}
= \Psi_{\mathcal{X}} \circ \Psi_{\mathcal{T}} \circ \Psi_{\mathcal{Q}_{3\pi/2 -}}
\circ \Psi_{\mathcal{S}_2}$ 
on~(\ref{ansatzz2a})--(\ref{ansatzz2b}), where 
\begin{align}
S_{2} & = \left(\begin{array}{cc}
1 & \; \; 0
\\
-a  & \; \; 1
\end{array}
\right),
\end{align}
and 
$\mathcal{T}_1 = \mathcal{T}_2 \equiv \mathcal{T} \in \mathbb{R}$, 
for simplicity, leading to
\begin{align}
\mathcal{K}_{n, \mathcal{X}, \mathcal{T}, \mathcal{Q}_{3\pi/2 -}, 
\mathcal{S}_2}(\mathbf{\bar{x}}; \, \mathbf{\bar{k}}) & = 
\bar{x}_n ( \bar{k}_{0}^{n} + \bar{k}_{1}^n \bar{x}_1 + \bar{k}_{2}^n \bar{x}_2 
\nonumber \\
 & + \bar{k}_{1 1}^n (\bar{x}_1)^2 + \bar{k}_{1 2}^n \bar{x}_1 \bar{x}_2 
 + \bar{k}_{2 2}^n (\bar{x}_2)^2 ), \; \; \; n \; = \; 1, 2,  
 \label{zerorot3} 
\end{align}
where the coefficients 
$\mathbf{\bar{k}} = \mathbf{\bar{k}}(a, \alpha, \mathbf{\mathcal{T}})$ 
are given by:
\begin{align}
\bar{k}_0^1  & = 
\frac{1}{2} \mathcal{T} \big(-2 + a (2 \alpha + \mathcal{T} + 
a (2 + 5 \mathcal{T}) +  4 \mathcal{T} a^2) \big), 
\nonumber \\
 \bar{k}_0^2 & =-\frac{1}{2} \mathcal{T}  
 \big(2 + 2 \alpha + 3 \mathcal{T} + a(4 + 9 \mathcal{T} 
 + 6 \mathcal{T} a ) \big), 
 \nonumber \\
\bar{k}_1^1  & =-\frac{1}{2} \mathcal{T} a (2 + 5 a),  
\; \; \; \; \; \;  \bar{k}_1^2 =  
1 + \frac{9}{2} \mathcal{T} (\frac{2}{3} + a), 
\nonumber  \\
\bar{k}_2^1  &= 
1 - \frac{1}{2} a \big(2 \alpha + a (2 + 5 \mathcal{T}  
+ 8 \mathcal{T} a ) \big), \; \; \; \; \; \;  \bar{k}_2^2 = 
\alpha + a \big( 2 + \frac{9}{2}\mathcal{T} + 6 \mathcal{T} a \big),  
\nonumber \\
\bar{k}_{1 1}^1 & = \frac{1}{2} a, \; \;  \bar{k}_{1 1}^2   
= - \frac{3}{2}, \; \;  \bar{k}_{1 2}^1 
= \frac{5}{2} a^2, \; \;  \bar{k}_{1 2}^2 
= - \frac{9}{2} a, \; \;  \bar{k}_{2 2}^1 = 2 a^3, \; \;  \bar{k}_{2 2}^2 
= -3 a^2,\label{x2coeff}
\end{align}
together with $a \in (-1,0)$, $|\alpha| \ll 1$ and $\Phi = \{ \mathcal{T} >  1 \}$.

\begin{theorem}\label{XFactor2}
ODE systems with RHSs~$(\ref{ansatzz2a})$--$(\ref{ansatzz2b})$ 
and~$(\ref{zerorot3})$ are topologically equivalent in the 
neighbourhood of the fixed points in $\mathbb{R}_{>}^2$, with 
the homoclinic orbit in $\mathbb{R}^2_{>}$, and a saddle at the 
origin and a saddle with coordinates 
$(0, 1/2 \mathcal{T} a^{-1} (1 + 2 a))$ being the only boundary fixed 
points in $\mathbb{R}_{\ge}^2$, if:
\begin{align}
a & \in \left(-1,-\frac{1}{2} \right), \; |\alpha| \ll 1, \; \qquad \mbox{and} \qquad 
\nonumber \\ 
\Phi  &= \{\mathcal{T} > \mathrm{max}(1, 2 a^{-2} (1 - a^2)), \mathcal{T} 
< 2 a^{-1} (1 + 4 a)^{-1} (1 - a) \}.\label{conditions3}
\end{align}
\end{theorem}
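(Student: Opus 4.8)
The plan is to mirror the proof of Theorem~\ref{XFactor}, exploiting that the composite transformation $\Psi_{\mathcal{X}, \mathcal{T}, \mathcal{Q}_{3\pi/2 -}, \mathcal{S}_2}$ factors as an affine part $\Psi_{\mathcal{T}} \circ \Psi_{\mathcal{Q}_{3\pi/2 -}} \circ \Psi_{\mathcal{S}_2}$ (a shear, a reflection, and a translation) followed by the $x$-factorable map $\Psi_{\mathcal{X}}$. Since affine transformations preserve both the topology of the phase portrait and the polynomial degree (as noted after Definition~\ref{def:translation}), it suffices to analyse the action of $\Psi_{\mathcal{X}}$ on the affinely transformed system, whose RHS I will call $\widetilde{\boldsymbol{\mathcal{P}}}$ and whose three interior fixed points (a saddle at the image of the origin, an unstable spiral, a stable node) are the affine images of those of~(\ref{ansatza})--(\ref{ansatzb}) described in Lemma~\ref{Theorem1}. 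First I would set $\alpha = 0$; the case of small $|\alpha| \ne 0$ follows at the end by continuity of the coefficients~(\ref{x2coeff}) in $\alpha$, exactly as in Theorem~\ref{XFactor}.

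Next I would apply Theorem~\ref{Xfactdet} term by term to the fixed points in $\mathbb{R}_{>}^2$. By part (i) the saddle is unconditionally invariant. For the node and the spiral I would evaluate the Jacobian of $\widetilde{\boldsymbol{\mathcal{P}}}$ at each (these are affine conjugates of the Jacobians already computed in the proof of Theorem~\ref{XFactor}, so their sign patterns are inherited), verify that conditions (ii)--(iii) of Theorem~\ref{Xfactdet} both hold at the node so that its stability and type persist, and that only condition (ii) holds at the spiral. Type invariance of the spiral then requires $\mathrm{disc}(J_{\mathcal{X}, \mathcal{T}, \mathcal{Q}_{3\pi/2 -}, \mathcal{S}_2}|_{\mathrm{spiral}}) < 0$; expanding this discriminant using~(\ref{x2coeff}) together with $\mathcal{T} > \mathrm{max}(1, 2 a^{-2}(1-a^2))$ yields the upper bound $\mathcal{T} < 2 a^{-1}(1+4a)^{-1}(1-a)$ in~(\ref{conditions3}), and requiring that this interval for $\mathcal{T}$ be nonempty is precisely what forces $a \in (-1,-\tfrac12)$.

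I would then enumerate the boundary fixed points of~(\ref{zerorot3}), i.e. the zeros on the axes $\bar{x}_1 = 0$ and $\bar{x}_2 = 0$. Since both components of~(\ref{zerorot3}) are $\bar{x}_n$-factorable, the origin is always one, and by Theorem~\ref{Xfactdet}(iv) it is a saddle because $\bar{k}_0^1 > 0$ and $\bar{k}_0^2 < 0$ under~(\ref{conditions3}). On the $\bar{x}_2$-axis the remaining candidates are the roots of the quadratic $\bar{k}_0^2 + \bar{k}_2^2 \bar{x}_2 + \bar{k}_{22}^2 \bar{x}_2^2$, one of which is $(0, \tfrac12 \mathcal{T} a^{-1}(1+2a))$; I would classify it via Theorem~\ref{Xfactdet}(v) as a saddle, and show the other root, together with the roots on the $\bar{x}_1$-axis, either have nonzero imaginary part or lie outside $\mathbb{R}_{\ge}^2$ because of~(\ref{conditions3}) — in particular $\mathcal{T} > 2 a^{-2}(1-a^2)$ is what pushes the unwanted $\bar{x}_1$-axis point out of the nonnegative quadrant, mirroring the role of $\mathcal{T}_2 > -a \mathcal{T}_1$ in Theorem~\ref{XFactor}. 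Finally, the constraint $\mathcal{T} > 1$ keeps the image of the point $(0,-1)$ on the alpha loop in $\mathbb{R}_{>}^2$, which (as in Lemma~\ref{auxlemma}) suffices to place the whole homoclinic orbit there.

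I expect the main obstacle to be the boundary-fixed-point bookkeeping: correctly locating every real root on both axes for the cubic system~(\ref{zerorot3}), applying the sign test of Theorem~\ref{Xfactdet}(v) to each, and checking that~(\ref{conditions3}) simultaneously (a) keeps the interior spiral a spiral, (b) keeps the homoclinic orbit strictly inside $\mathbb{R}_{>}^2$, and (c) leaves exactly the two stated boundary saddles in $\mathbb{R}_{\ge}^2$. The spiral discriminant inequality, being assembled from the messier cubic coefficients~(\ref{x2coeff}) rather than the quadratic ones of~(\ref{zerorot2}), is the algebraically heaviest single step, and is where the restriction $a < -\tfrac12$ ultimately originates.
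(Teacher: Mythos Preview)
Your plan is correct and matches the paper's own proof, which is literally ``follow the same procedure as in the proof of Theorem~\ref{XFactor}'' together with a list of the three interior and five boundary fixed points. Two small corrections to your attributions: (a) Jacobian entry sign patterns are \emph{not} inherited under affine conjugation $A J A^{-1}$ in general (only eigenvalues are), so you must recompute the Jacobian of $\widetilde{\boldsymbol{\mathcal{P}}}$ at the node and spiral directly rather than reading them off from Theorem~\ref{XFactor}; and (b) the constraint $\mathcal{T} > 2a^{-2}(1-a^2)$ is what keeps the \emph{node} $(\mathcal{T} - 2a^{-2}(1-a^2),\, \mathcal{T} + a^{-3}(1-a^2))$ inside $\mathbb{R}_{>}^2$, not what removes an $\bar{x}_1$-axis boundary point. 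Neither issue affects the validity of the outline.
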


\begin{proof}
Following the same procedure as in the proof of Theorem~\ref{XFactor}, 
and noting that the saddle, node and spiral fixed points are 
given by $\big( \mathcal{T}, \mathcal{T} \big)$, 
$\big( \mathcal{T} -2 a^{-2} (1-a^2), \mathcal{T} + a^{-3} (1-a^2) \big)$, 
and $\big( \mathcal{T} + 2/9 (3 + a^2), \mathcal{T} -  2/9 a \big)$, 
respectively, while the five boundary fixed points are 
$(0,0)$, 
$(\mathcal{T} + 1/2 
\big( 
5\mathcal{T} a \pm \sqrt{\mathcal{T} (8 a^{-1}(1-a^2) + 9 \mathcal{T} a^2)}\big),0)$, 
$(0, 1/2 \mathcal{T} a^{-1} (1 + 2 a))$, 
$(0, a^{-1} \big(2/3 + \mathcal{T} (1 + a) \big))$, 
one finds~(\ref{conditions3}).
\end{proof}
\noindent
Note that as $\alpha \to -\frac{1}{2}$, the only boundary fixed point 
in $\mathbb{R}_{\ge}^2$ is a saddle at the origin, and it is connected 
via a heteroclinic orbit to the saddle in $\mathbb{R}_{>}^2$. 
A representative phase plane diagram corresponding to the ODE system
with RHS~(\ref{zerorot3}) is shown in Figure~\ref{fig:times}(c).

While systems~(\ref{zerorot2}) and~(\ref{zerorot3}) contain 
specific variations of the specific homoclinic orbit given 
by~(\ref{loop}), they, nevertheless, indicate possible phase 
plane topologies of the kinetic equations containing homoclinic 
orbits of shapes similar to~(\ref{loop}). With a fixed shape 
and orientation of a homoclinic loop which is similar 
to~(\ref{loop}), three possible orientations of a corresponding 
saddle manifold in $\mathbb{R}_{>}^2$ are: it may extend 
in $\mathbb{R}_{>}^2$ without asymptotically approaching 
a phase plane axis, it may asymptotically approach an axis, 
or it may cross an axis at a fixed point. In Figure~\ref{fig:times}(b), 
a combination of the first and second orientation is displayed, 
while in Figure~\ref{fig:times}(c) of the first and third orientation. 

\subsubsection{The quasi-steady state transformation}
In Lemma~\ref{auxlemma}, it was demonstrated that 
system~(\ref{ansatzz2a})--(\ref{ansatzz2b}) has at least 
one cross-negative term under translation transformations. 
It can be readily shown that system~(\ref{ansatzz2a})--(\ref{ansatzz2b}) 
in fact has minimally two cross-negative terms under 
translation transformations, $\bar{k}_2^1 < 0$ and 
$\bar{k}_0^2 < 0$, and this is the case when $a \in (-1,0)$, 
$\Phi = \{\mathcal{T}_1 \in (2 \sqrt{3}/9, -\mathcal{T}_2 a), \mathcal{T}_2 > 1\}$. 
Let us apply a quasi-steady state transformation 
$\Psi_{\mathrm{QSSA}}$ on system~(\ref{zerorot}) that 
eliminates the two cross-negative terms, i.e. two new 
variables are introduced, $\bar{y}_1, \bar{y}_2 \in \mathbb{R}_{>}^{2}$, 
and we take $p_1(\bar{x}) = p_2(\bar{x}) = 1$, in 
Definition~\ref{def:QSSA}. Letting 
$\Psi_{\mathrm{QSSA}, \mathcal{T}} \equiv \Psi_{\mathrm{QSSA}} \circ \Psi_{\mathcal{T}}$, 
the resulting kinetic function  
$\mathcal{K}_{\mathrm{QSSA}, \mathcal{T}}(\{ \bar{x}, \bar{y} \}; 
\, \bar{k}, \omega, \mu) \equiv \big(\Psi_{\mathrm{QSSA, \mathcal{T}}} \mathcal{P}) 
(\{ \bar{x}, \bar{y} \}; \, \bar{k}, \omega, \mu)$ is given 
by
\begin{align}
\mathcal{K}_{x_1, \mathrm{QSSA}, 
\mathcal{T}}(\{\bar{x}_1, \bar{x}_2, \bar{y}_1\}; \, \bar{k}, \omega_1) & 
= \bar{k}_{0}^{1} + \bar{k}_{1}^1 \bar{x}_1 +  
\bar{k}_{2}^1 \omega_1^{-1} \bar{x}_1 \bar{x}_2 \bar{y}_1 + 
\bar{k}_{1 2}^1 \bar{x}_1 \bar{x}_2 + \bar{k}_{2 2}^1 (\bar{x}_2)^2 , 
\nonumber \\
\mathcal{K}_{x_2, \mathrm{QSSA}, \mathcal{T}}(\{\bar{x}_1, \bar{x}_2, \bar{y}_2\}; 
\, \bar{k}, \omega_2) & 
=  \bar{k}_{0}^{2}  \omega_2^{-1} \bar{x}_2 \bar{y}_2 + \bar{k}_{1}^2 \bar{x}_1 
+ \bar{k}_{2}^2 \bar{x}_2 + \bar{k}_{2 2}^2 (\bar{x}_2)^2, 
\nonumber\\
\mathcal{K}_{y_1, \mathrm{QSSA}, \mathcal{T}}(\{\bar{x}_1, \bar{y}_1 \}; \, \omega_1, \mu) & 
= \mu^{-1} (\omega_1 - \bar{x}_1 \bar{y}_1), 
\nonumber\\
\mathcal{K}_{y_2, \mathrm{QSSA}, \mathcal{T}}(\{\bar{x}_2, \bar{y}_2 \}; \, \omega_2, \mu) & 
=  \mu^{-1} (\omega_2 - \bar{x}_2 \bar{y}_2),
\label{QSSAs}
\end{align}
with $\bar{x}_n (0) > 0$, $\omega_n > 0$, $n = 1, 2$, and $\mu \to 0$.

In~(\ref{QSSAs}), $\lim_{x_n \to 0} \lim_{\mu \to 0} y_n = + \infty$, $n = 1,2$, 
and a geometrical implication is that, say, the saddle manifold crossing the $x_2$-axis 
in Figure~\ref{fig:times}(a), instead asymptotically approaches the $y_1$-axis. 
The asymptotic behavior of the saddle manifolds is achieved by the additional 
(boundary) fixed points in~(\ref{zerorot2}) displayed in Figure~\ref{fig:times}(b), 
and by additional phase space dimensions in~(\ref{QSSAs}).

Note that a composition an $x$-factorable and a quasi-steady state 
transformation may be used to make~(\ref{zerorot}) kinetic. 
For example, one may eliminate the cross-negative term 
$\bar{k}_2^1$ in~(\ref{zerorot}) by using the $x_1$-factorable 
transformation $\Psi_{\mathcal{X}_1}$, and the cross-negative 
term $\bar{k}_0^2$ by using an appropriate $\Psi_{\mathrm{QSSA}}$. 
An example of a kinetic function obtained by a transformation 
of the form $\Psi_{\mathrm{QSSA}, \mathcal{X}_1, \mathcal{T}}$ 
is given by
\begin{align}
\mathcal{K}_{x_1,\mathcal{\mathrm{QSSA}},\mathcal{X}_1,\mathcal{T}}(\bar{x}; \, \bar{k}) & 
= \bar{x}_1 \big( \bar{k}_{0}^{1} + \bar{k}_{1}^1 \bar{x}_1 +  
\bar{k}_{2}^1 \bar{x}_2 + \bar{k}_{1 2}^1 \bar{x}_1 \bar{x}_2 + 
\bar{k}_{2 2}^1 (\bar{x}_2)^2 \big), 
\nonumber \\
\mathcal{K}_{x_2, \mathcal{\mathrm{QSSA}},\mathcal{X}_1,\mathcal{T}}(\{\bar{x}, \bar{y}\}; 
\, \bar{k}, \omega) & =   
\bar{k}_{0}^{2}  \omega^{-1} \bar{x}_2 \bar{y} + \bar{k}_{1}^2 \bar{x}_1 + 
\bar{k}_{2}^2  \bar{x}_2 + \bar{k}_{2 2}^2 (\bar{x}_2)^2, 
\nonumber\\
\mathcal{K}_{y, \mathcal{\mathrm{QSSA}},\mathcal{X}_1,\mathcal{T}}(\{\bar{x}_2, \bar{y} \}; 
\, \omega, \mu) & =  \mu^{-1} (\omega - \bar{x}_2 \bar{y}),
\label{QSSAhybrid} 
\end{align}
with $\bar{x}_2 (0) > 0$, $\omega > 0$, and $\mu \to 0$. Note that 
the chosen $\Psi_{\mathrm{QSSA}, \mathcal{X}_1, \mathcal{T}}$ does not 
introduce any additional fixed points when applied to system~(\ref{zerorot}).

\subsection{Step \emph{(3)}: construction of the canonical reaction network $\mathcal{R}_{\mathcal{K}^{-1}}$} \label{sec:step3}

Definition~\ref{def:cn} can be used to map the kinetic 
functions~(\ref{zerorot2}), (\ref{zerorot3}), (\ref{QSSAs}), 
and (\ref{QSSAhybrid}) to the canonical reaction 
networks $\mathcal{R}_{\mathcal{K}^{-1}}$. This is 
illustrated for (\ref{zerorot2}) in this section, and 
for (\ref{zerorot3}) and (\ref{QSSAhybrid}) in~\ref{AppSys}. 
For clarity, both the induced kinetic equations and the 
induced canonical reaction networks are presented. Note 
that the reaction networks are assumed to be taking place 
in an open reactor, and are not necessarily purely chemical 
in nature. Nevertheless, the non-chemical processes present 
in kinetic equations are represented as quasi-chemical reactions. 
Such reactions take form of input/output of chemical species, 
as well as containing quasi-species that are time-independent 
on a relevant time scale, so that their constant concentration 
is absorbed into a quasi-kinetics, leading to conservation 
laws not necessarily holding~\cite{Feinberg}. 

Writing $\mathbf{x} \equiv \mathbf{\bar{x}}$, the induced 
kinetic equations for~$(\ref{zerorot2})$ are given by
\begin{align}
\frac{\mathrm{d} x_1}{\mathrm{d} t}  & = 
k_1 x_1 + k_3 x_1^2 - k_5 x_1 x_2 - k_7 x_1^2 x_2 + k_8 x_1 x_2^2,  
\nonumber \\
\frac{\mathrm{d} x_2}{\mathrm{d} t} & = 
- k_2 x_2 + k_4 x_1 x_2 + k_6 x_2^2 - k_9 x_2^3,
\nonumber
\end{align}
while the induced canonical reaction network:
\begin{eqnarray*}
\begin{aligned}
& r_1: \;  & s_1  &
\xrightarrow[]{ k_1 } 2 s_1 ,  \; \; \; \; \; \; \; \; \; \; \; \; 
\; \; \; \; \; \; \; \; \; \; \; \; \; \; \;  \; \; \; \; r_2:  & 
s_2  &\xrightarrow[]{ k_2 } \varnothing,  \\
& r_3: \;  & 2 s_1  &\xrightarrow[]{ k_3 } 3 s_1, \; \; \; \; \;  
\; \; \; \; \; \; \; \; \; \; \; \; \;  \; \; \; \; \;  \; \; \; \; \; \; \; \;   
r_4: & s_1 + s_2  &\xrightarrow[]{ k_4 } s_1 + 2 s_2, \\
& r_5: \;  & s_1 + s_2  &\xrightarrow[]{ k_5 } s_2,  \; \; \; \; \; \; \; 
\; \; \; \; \; \; \; \; \; \; \; \; \; \; \; \; \; \; \; \; \;  \; \; \; \; 
\; r_6:  & 2 s_2  &\xrightarrow[]{ k_6 } 3 s_2, \\
& r_7: \;  & 2 s_1 + s_2  &\xrightarrow[]{ k_7 } s_1 + s_2, \; \; \; \; 
\; \; \; \; \; \; \; \;  \; \; \; \; \;  \; \; \; \; \; \; \; \;   
r_8: & s_1 + 2 s_2  &\xrightarrow[]{ k_8 } 2 s_1 + 2 s_2, \\
& r_9: \;  & 3 s_2  &\xrightarrow[]{ k_9 } 2 s_2,
\end{aligned}
\end{eqnarray*} 
where 
$k_1 = |\bar{k}_0^1|$, 
$k_2 = |\bar{k}_0^2|$, 
$k_3 = |\bar{k}_1^1|$, 
$k_4 = |\bar{k}_1^2|$, 
$k_5 = |\bar{k}_2^1|$, 
$k_6 = |\bar{k}_2^2|$, 
$k_7 = |\bar{k}_{1 2}^1|$, 
$k_8 = |\bar{k}_{2 2}^1|$, 
$k_9 = |\bar{k}_{2 2}^2|$, 
with the coefficients $\mathbf{\bar{k}}$ 
given by~(\ref{h5}), and the conditions given by~(\ref{Xonecond}). 

\section{Summary} 
\label{sec:end}
In the first part of the paper, a framework for 
constructing reaction systems having prescribed properties 
has been formulated as an inverse problem and presented in 
Section~\ref{sec:inverse}, relying on definitions introduced 
in Section~\ref{sec:notation}. As a part of the framework, 
in Section~\ref{sec:kt}, kinetic transformations have been 
defined that enable one to map an arbitrary polynomial ODE 
system with a set of constraints, possibly containing the 
cross-negative terms, into a kinetic one. Augmented with 
the results from~\cite{UNI1}, such transformations can 
be applied to nonpolynomial systems as well. Systems for 
which no affine transformation is kinetic have been defined 
as affinely nonkinetic in Section~\ref{sec:affine}, to 
emphasize the fact that significant changes to their 
solutions are required. X-factorable transformation~\cite{Xfactor}, 
that does not change the dimension of the systems being 
transformed, but introduces a higher number of nonlinear 
terms and leads to autocatalytic reaction networks, has 
been defined in Section~\ref{sec:xft}, and its properties 
when applied on two-dimensional systems have been derived 
in Theorem~\ref{Xfactdet}. The quasi-steady state transformation, 
that increases the dimension of the systems being transformed, 
but generally introduces a lower number of nonlinear terms, 
has been presented in Section~\ref{sec:QSSA}, and justified 
using Tikhonov's and Korzukhin's theorems~\cite{QSSA1}. 
As the focus of the paper has been more placed on the 
construction of kinetic equations, and less on constructions 
of reaction networks, in Section~\ref{sec:cn} an analytical 
and algorithmic method for obtaining the so-called canonical 
networks has been presented~\cite{Toth2}. The framework 
may be used for constructing lower-dimensional reaction systems 
displaying exotic phenomena, with Algorithm~\ref{tabinv} 
exemplifying the construction process. 

In the second part of the paper, the inverse problem framework 
has been applied to a case study of constructing bistable 
reaction systems undergoing a supercritical homoclinic bifurcation, 
with a parameter controlling the stable sets separation, with 
an overview of the procedure presented in Section~\ref{sec:formulation}. 
In Section~\ref{sec:step1}, a polynomial ODE system having a 
homoclinic orbit in the phase plane has been constructed 
using the results from~\cite{Sand}, and perturbed in 
such a way that the sufficient conditions for the existence 
of the homoclinic bifurcation are fulfilled, as required 
by Andronov-Leontovich Theorem~\cite{Bifur1}. 
In Section~\ref{sec:step2}, the kinetic transformations from 
Section~\ref{sec:inverse} have been used in order to map the 
polynomial system to a kinetic one with the homoclinic orbit 
in the positive quadrant. The topological phase space effects 
produced by the kinetic transformations on the constructed systems 
have been discussed. In Section~\ref{sec:step3} and~\ref{AppSys}, 
the canonical reaction networks induced by some of the kinetic 
equations have been presented. 

In this paper, we have constructed chemical reaction networks 
inducing two-dimensional cubic kinetic functions with the 
deterministic ODEs (kinetic equations) undergoing a supercritical 
homoclinic bifurcation. In a future publication, we will 
report our results on the stochastic analysis of the constructed 
systems, consisting of examining the quasi-stability of 
the limit cycle, and stochastic switching between the stable 
sets, as a function of the bifurcation parameter and the 
parameter controlling the stable set separation. A motivation 
for such a study is the fact that stochastic effects play 
an important role in systems biology due to the inherently 
small reactors~\cite{Intro1,Intro2,Intro3,Intro4}, and 
one might even say that systems biology has initiated 
a renaissance of the stochastic reaction kinetics~\cite{StochReac}. 

\appendix

\section{Oscillations in two-component bimolecular reaction 
systems with cross-negative terms}
\label{OsciCNT}

\noindent
{\bf Theorem A.1:}
{\it Consider a two-dimensional ODE system with a quadratic polynomial 
RHS, 
$\boldsymbol{\mathcal{P}}(\mathbf{x}; \, \mathbf{k}) \in \mathbb{P}_{2}(\mathbb{R}^{2}; \, \mathbb{R}^{2})$, $\mathbf{k} \in \mathbb{R}^{12}$:
\begin{align}
\frac{\mathrm{d} x_1}{\mathrm{d} t} = 
\; \; \mathcal{P}_1(\mathbf{x}; \, \mathbf{k})  & = k_{0}^{1} + k_{1}^1 x_1 + k_{2}^1 x_2 + k_{1 1}^1 x_1^2 + k_{1 2}^1 x_1 x_2 + k_{2 2}^1 x_2^2, 
\nonumber \\
\frac{\mathrm{d} x_2}{\mathrm{d} t} = \; \; \mathcal{P}_2(\mathbf{x}; \, \mathbf{k}) & =  
k_{0}^{2} + k_{1}^2 x_1 + k_{2}^2 x_2 
+ k_{1 1}^2 x_1^2 + k_{1 2}^2 x_1 x_2 + k_{2 2}^2 x_2^2, \label{positivity}
\end{align}
If $k_{1 1}^1 \le 0$ and $k_{2 2}^2 \le 0$, and if system~$(\ref{positivity})$ is \emph{nonnegative}, then the system has no limit cycles.}

\begin{proof}
Considering $x_1, x_2 > 0$, writing 
$\boldsymbol{\mathcal{P}}(\mathbf{x}; \, \mathbf{k}) 
= \big(\mathcal{P}_1(x_1,x_2; \, \mathbf{k}), \mathcal{P}_2(x_1,x_2; \, \mathbf{k}) \big)$, 
and letting the Dulac function to be given by $B(x_1,x_2) = (x_1 x_2)^{-1}$, 
it follows that
\begin{align}
D(x_1,x_2; \, \mathbf{k}) & = 
\frac{\partial}{\partial x_1} \big( B(x_1,x_2) \mathcal{P}_1(x_1,x_2; \, \mathbf{k}) \big)  
+ \frac{\partial}{\partial x_2} \big( B(x_1,x_2) \mathcal{P}_2(x_1,x_2; \, \mathbf{k}) \big) \nonumber \\
& = 
- \left(\frac{k_{0}^1}{x_1^2 x_2} + \frac{k_{2}^1}{x_1^2} + \frac{k_{2 2}^1 x_2}{x_1^2} \right) 
+ \frac{k_{1 1}^1}{x_2} 
- \left(\frac{k_{0}^2}{x_1 x_2^2} + \frac{k_{1}^2}{x_2^2} + \frac{k_{1 1}^2 x_1}{x_2^2} \right) 
+ \frac{k_{2 2}^2}{x_1}. \nonumber
\end{align}
Multiplying by $- (x_1 x_2)^2$, and defining a new function 
$\bar{D}(x_1,x_2; \, \mathbf{k}) = - (x_1 x_2)^2 D(x_1,x_2; \, \mathbf{k})$, results in
\begin{equation*}
\bar{D}(x_1,x_2; \, \mathbf{k}) = 
x_2 \left[\mathcal{P}_1(0,x_2) - k_{1 1}^1 x_1^2 \right] 
+ x_1 \left[ \mathcal{P}_2(x_1,0) - k_{2 2}^2 x_2^2 \right].
\end{equation*}
If $k_{1 1}^1 \le 0$ and $k_{2 2}^2 \le 0$, and if system~(\ref{positivity}) is nonnegative, so that $\mathcal{P}_1(0,x_2) \ge 0$ and $\mathcal{P}_2(x_1,0) \ge 0$, then $\bar{D} \ge 0$. The only case when a limit cycle may exist is if $\bar{D} = 0$ for all $x_1, x_2 >0$, and in~\cite{Constr1} it is shown that no limit cycles exist in this case.
\end{proof}
\noindent
In~\cite{Constr1}, it was shown that the absence of cross-negative terms in $\boldsymbol{\mathcal{P}}(\mathbf{x}; \, \mathbf{k}) \in \mathbb{P}_{2}(\mathbb{R}^{2}; \, \mathbb{R}^{2})$, with $k_{1 1}^1 \le 0$ and $k_{2 2}^2 \le 0$, implies the absence of limit cycles, i.e. one requires the more restrictive condition $\boldsymbol{\mathcal{P}}(\mathbf{x}; \, \mathbf{k}) \in \mathbb{P}^{\mathcal{K}}_{2}(\mathbb{R}_{\ge}^{2}; \, \mathbb{R}^{2})$. Theorem A.1 shows that, in fact, absence of the cross-negative effect in $\boldsymbol{\mathcal{P}}(\mathbf{x}; \, \mathbf{k}) \in \mathbb{P}_{2}(\mathbb{R}^{2}; \, \mathbb{R}^{2})$, with $k_{1 1}^1 \le 0$ and $k_{2 2}^2 \le 0$, implies the absence of limit cycles, i.e. one requires the less restrictive condition $\boldsymbol{\mathcal{P}}(\mathbf{x}; \, \mathbf{k}) \in \mathbb{P}_{2}(\mathbb{R}_{\ge}^{2}; \, \mathbb{R}^{2})$.

\section{Andronov-Leontovich theorem}
\label{HomoclinicTheorem}

\noindent
{\bf Andronov-Leontovich theorem}~\cite{Bifur1}:
{\it Consider system~$(\ref{eqn:nkv})$ with $\boldsymbol{\mathcal{P}}(\mathbf{x}; \, \mathbf{k},\alpha) \in \mathbb{P}_{m}(\mathbb{R}^2; \, \mathbb{R}^2)$, $m \in \mathbb{N}$, where $\alpha \in \mathbb{R}$ is a bifurcation parameter. Let $\lambda_1(\alpha)$ and $\lambda_2(\alpha)$ be eigenvalues of the Jacobian corresponding to the two-dimensional system~$(\ref{eqn:nkv})$, $J = \nabla \boldsymbol{\mathcal{P}}(\mathbf{x}; \, \mathbf{k},\alpha)$, and suppose that at $\alpha = 0$, the following \emph{homoclinic bifurcation conditions} {\rm (i)--(ii)} are satisfied, and that~$(\ref{eqn:nkv})$ is \emph{generic}, i.e. the following \emph{homoclinic genericity conditions} {\rm (iii)--(iv)} are satisfied: 
\begin{romanlist}[(i)]
\item{System has a saddle fixed point $\mathbf{x}^{*} = 0$ with eigenvalues $\lambda_1(0) < 0 < \lambda_2(0)$.}
\item{System has a homoclinic orbit $\gamma^{*}$ to the saddle fixed point $\mathbf{x}^{*}$.}
\item{Nondegeneracity condition:} $\sigma_0 =\lambda_1(0) + \lambda_2(0) \ne 0$, where $\sigma_0 \in \mathbb{R}$ is called the \emph{saddle quantity}. 
\item{Transversality condition:} \emph{Melnikov integral}, $M(\alpha)$, along the homoclinic orbit satisfies:
\begin{align}
M(0) & =  \int_{- \infty}^{+ \infty} {\varphi(t)} \left( \boldsymbol{\mathcal{P}}(\mathbf{x}; \, \mathbf{k},\alpha) \times \frac{\partial  \boldsymbol{\mathcal{P}}(\mathbf{x}; \, \mathbf{k},\alpha)}{\partial \alpha} \right) \mathrm{d} t  \; \; \ne 0, \label{Melnikov}
\end{align}
where $\varphi(t) = \exp \left(- \int_{0}^{t} (\nabla \cdot \boldsymbol{\mathcal{P}}(\mathbf{x}; \, \mathbf{k},\alpha) d \tau \right)$, $\varphi(t) > 0$.  This is equivalent to splitting of the homoclinic orbit at the bifurcation with a nonzero speed.
\end{romanlist}
Then, for all sufficiently small $|\alpha|$, there exists a neighbourhood of the saddle fixed point and the homoclinic orbit such that a unique limit cycle bifurcates from $\gamma^{*}$. If $\sigma_0 < 0$, the homoclinic bifurcation is \emph{supercritical}, giving rise to a unique \emph{stable} limit cycle, while if $\sigma_0 > 0$, the homoclinic bifurcation is \emph{subcritical}, giving rise to a unique \emph{unstable} limit cycle.
}

\section{The canonical reaction networks induced 
by~$(\ref{zerorot3})$ and~$(\ref{QSSAhybrid})$}
\label{AppSys}

Writing $\mathbf{x} \equiv \mathbf{\bar{x}}$, 
the induced kinetic equations for~$(\ref{zerorot3})$ are given by
\begin{align}
\frac{\mathrm{d} x_1}{\mathrm{d} t}  & = -k_1 x_1 - k_3 x_1^2 + k_5 x_1 x_2 - k_7 x_1^3 + k_9 x_1^2 x_2 - k_{11} x_1 x_2^2, \nonumber \\
\frac{\mathrm{d} x_2}{\mathrm{d} t} & = k_2 x_2 + k_4 x_1 x_2 - k_6 x_2^2 - k_8 x_1^2 x_2 + k_{10} x_1 x_2^2 - k_{12} x_2^3, \nonumber
\end{align}
while the canonical reaction network:
\begin{eqnarray}
\begin{aligned}
& r_1: \;  & s_1  &\xrightarrow[]{ k_1 } \varnothing,  \; \; \; \; \; \; \; \;  \; \; \; \; \; \; \; \; \; \; \; \; \; \; \; \; \; \;  \; \; \; \; \;   r_2:  & s_2  &\xrightarrow[]{ k_2 } 2 s_2, \nonumber \\
& r_3: \;  & 2 s_1  &\xrightarrow[]{ k_3 } s_1, \; \; \; \; \;  \; \; \; \; \; \; \; \; \; \; \; \; \;  \; \; \; \; \;  \; \; \; \; \; \; \; \;   r_4: & s_1 + s_2  &\xrightarrow[]{ k_4 } s_1 + s_2 + \mathrm{sign}(\bar{k}_1^2) s_2, \\
& r_5: \;  & s_1 + s_2  &\xrightarrow[]{ k_5 } 2 s_1 + s_2,  \; \; \; \; \; \; \; \;  \; \; \; \; \;  \; \; \;  \; \; \; \; \; r_6:  & 2 s_2  &\xrightarrow[]{ k_6 } s_2, \\
& r_7: \;  & 3 s_1 &\xrightarrow[]{ k_7 } 2 s_1, \; \; \; \; \;  \; \; \;  \; \; \; \; \; \; \; \;  \; \; \; \; \;  \; \; \; \; \; \; \; \;   r_8: & 2 s_1 + s_2  &\xrightarrow[]{ k_8 } 2 s_1, \\
& r_9: \;  & 2 s_1 + s_2  &\xrightarrow[]{ k_9 } 3 s_1 + s_2,  \; \; \; \; \; \; \;  \; \; \; \; \;  \; \; \; \; \; \; \; \;   r_{10}: & s_1 + 2 s_2  &\xrightarrow[]{ k_{10} } s_1 + 3 s_2,\\
& r_{11}: \;  & s_1 + 2 s_2  &\xrightarrow[]{ k_{11} }2 s_2, \; \; \; \; \; \; \; \; \; \; \; \; \;  \; \; \; \; \;  \; \; \; \; \; \; \; \;   r_{12}: & 3 s_2  &\xrightarrow[]{ k_{12} } 2 s_2,
\end{aligned}
\end{eqnarray} 
where $k_1 = |\bar{k}_0^1|$, $k_2 = |\bar{k}_0^2|$, $k_3 = |\bar{k}_1^1|$, $k_4 = |\bar{k}_1^2|$, $k_5 = |\bar{k}_2^1|$, $k_6 = |\bar{k}_2^2|$, $k_7 = |\bar{k}_{1 1}^1|$, $k_8 = |\bar{k}_{1 1}^2|$, $k_9 = |\bar{k}_{1 2}^1|$, $k_{10} = |\bar{k}_{1 2}^2|$, $k_{11} = |\bar{k}_{2 2}^1|$, $k_{12} = |\bar{k}_{2 2}^2|$, with the coefficients $\mathbf{\bar{k}}$ given by~(\ref{x2coeff}), and the conditions given by~(\ref{conditions3}). Note that by taking $a \in \big(-\frac{8}{9}, \frac{1}{5} (2 - \sqrt{34}) \big)$ and $\mathcal{T} = -\frac{2}{3} (2 + 3 a)^{-1}$, it follows that $k_4 = 0$.

\medskip

\noindent
Writing $\mathbf{x} \equiv \mathbf{\bar{x}}$, the induced kinetic equations for~$(\ref{QSSAhybrid})$ are given by
\begin{align}
\frac{\mathrm{d} x_1}{\mathrm{d} t}  & = k_1 x_1 + k_3 x_1^2 - k_5 x_1 x_2 - k_7 x_1^2 x_2 + k_8 x_1 x_2^2, \nonumber \\
\frac{\mathrm{d} x_2}{\mathrm{d} t} & = - k_2 x_2 x_3 + k_4 x_1 + k_6 x_2 - k_9 x_2^2, \nonumber \\
\frac{\mathrm{d} x_3}{\mathrm{d} t} & = k_{10} - k_{11} x_2 x_3, \nonumber 
\end{align}
while the canonical reaction network:
\begin{eqnarray*}
\begin{aligned}
& r_1: \;  & s_1  &\xrightarrow[]{ k_1 } s_1 + \mathrm{sign}(\bar{k}_{0}^1) s_1,  \; \; \; \; \; \; \; \;  \; \; \; \; r_2:  & s_2 + s_3 &\xrightarrow[]{ k_2 } s_3, \\
& r_3: \;  & 2 s_1  &\xrightarrow[]{ k_3 } 3 s_1, \; \; \; \; \;  \; \; \; \; \; \; \; \; \; \; \; \; \;  \; \; \; \; \;  \; \; \; \; \; \; \; \;   r_4: & s_1  &\xrightarrow[]{ k_4 } s_1 + s_2, \\
& r_5: \;  & s_1 + s_2  &\xrightarrow[]{ k_5 } s_2,  \; \; \; \; \; \; \; \; \; \; \; \; \; \; \; \; \; \; \; \; \; \; \; \; \; \; \; \; \; \; \; \; \; r_6:  & s_2  &\xrightarrow[]{ k_6 } 2 s_2, \\
& r_7: \;  & 2 s_1 + s_2  &\xrightarrow[]{ k_7 } s_1 + s_2, \; \; \; \; \; \; \; \; \; \; \; \;  \; \; \; \; \;  \; \; \; \; \; \; \; \;   r_8: & s_1 + 2 s_2  &\xrightarrow[]{ k_8 } 2 s_1 + 2 s_2, \\
& r_9: \;  & 2 s_2  &\xrightarrow[]{ k_9 } s_2, \; \; \; \; \; \; \; \; \; \;\; \;\; \; \; \; \; \; \; \; \; \; \;  \; \; \; \; \; \; \; \;   r_{10}: & \varnothing &\xrightarrow[]{ k_{10} } s_3, \\
 & r_{11}: & s_2 + s_3 &\xrightarrow[]{ k_{11} } s_2,
\end{aligned}
\end{eqnarray*} 
with 
$k_1 = |\bar{k}_0^1|$, 
$k_2 = |\bar{k}_0^2|$, 
$k_3 = |\bar{k}_1^1|$, 
$k_4 = |\bar{k}_1^2|$, 
$k_5 = |\bar{k}_2^1|$, 
$k_6 = |\bar{k}_2^2|$, 
$k_7 = |\bar{k}_{1 2}^1|$, 
$k_8 = |\bar{k}_{2 2}^1|$, 
$k_9 = |\bar{k}_{2 2}^2|$, 
$k_{10} = \mu \omega$, 
$k_{11} = \mu$, the coefficients 
$\mathbf{\bar{k}}$ given by~(\ref{h5}), 
$\omega > 0$, $\mu \to 0$, and conditions given 
by~(\ref{Xonecond}), with the lower bound on $\mathcal{T}_2$ 
being $1$. Note that by taking 
$\mathcal{T}_1 = - \mathcal{T}_2 a^{-1}$, it follows that $k_1 = 0$.

\bigskip

\noindent
{\bf Acknowledgments:}
The research leading to these results has received funding from the European 
Research Council under the \textit{European Community}'s Seventh Framework 
Programme ({\it FP7/2007-2013})/ERC {\it grant agreement} n$^o$  239870,
and from the People Programme (Marie Curie Actions) of the European Union's 
Seventh Framework Programme ({\it FP7/2007-2013}) under REA grant agreement no. 328008.
Tom\'a\v s Vejchodsk\'y gratefully acknowledges the support of RVO 67985840. 
Radek Erban would like to thank the Royal Society for a University Research 
Fellowship and the Leverhulme Trust for a Philip Leverhulme Prize.

\end{document}